\def\fCenter{{\mbox{$\ \vdash\ $}}}
\newextarrow{\injectb}{5599}{>\relbar\relbar}
\newextarrow{\injectd}{5599}{\relbar\relbar >}
\newcommand{\red}[1]{\textcolor{red}{#1}}
\newcommand{\bfred}[1]{\textbf{\textcolor{red}{#1}}}
\def\aol{\rule[0.5865ex]{1.38ex}{0.1ex}}
\def\pdra{\mbox{$\,{\rotatebox[origin=c]{-90}{\raisebox{0.12ex}{$\pand$}}{\mkern-2mu\aol}}\,$}}
\def\pdla{\mbox{\rotatebox[origin=c]{180}{$\,{\rotatebox[origin=c]{-90}{\raisebox{0.12ex}{$\pand$}}{\mkern-2mu\aol}}\,$}}}
\def\mANDORatom#1{\hbox{\hbox to 0pt{$#1\TriangleUp$\hss}$#1\TriangleDown$}}
\newcommand{\mcAND}{%
\mathrel{\ooalign{\raisebox{-0.39ex}{$\mbox{\TriangleUp}$}\cr\kern4.2pt{\raisebox{-0.13ex}{$\cdot$}}}}}
\newcommand{\mcand}{%
\mathrel{\ooalign{$\vartriangle$\cr\kern1.99pt{\raisebox{-0.17ex}{$\cdot$}}}}}
\newcommand{\mAND}{\raisebox{-0.39ex}{\mbox{\,\TriangleUp\,}}}
\newcommand{\mand}{\vartriangle}
\newcommand{\nAND}{%
\mathrel{\ooalign{$\mbox{\TriangleUp}$\cr\kern0pt$\mbox{\rotatebox[origin=c]{180}{\TriangleUp}}$}}}
\newcommand{\nand}{%
\mathrel{\ooalign{$\vartriangle$\cr\kern0pt$\triangledown$}}}
\newcommand{\mcBAND}{%
\mathrel{\ooalign{\raisebox{-0.39ex}{$\mbox{\FilledTriangleUp}$}\cr\kern4.2pt{\raisebox{-0.13ex}{${\color{white}\cdot}$}}}}}
\def\mBAND{\raisebox{-0.39ex}{\mbox{\,\FilledTriangleUp\,}}}
\def\mband{\mbox{$\mkern+2mu\blacktriangle\mkern+2mu$}}
\newcommand{\mcband}{%
\mathrel{\ooalign{$\blacktriangle$\cr\kern1.99pt{\raisebox{-0.17ex}{${\color{white}\cdot}$}}}}}
\def\mbra{\mbox{$\,-{\mkern-3mu\blacktriangleright}\,$}}
\newcommand{\mcRA}{%
\mathrel{\ooalign{
                  \raisebox{-0.3ex}{$\rotatebox[origin=c]{-90}{$\mbox{{\TriangleUp}}$}$}
                                                                            \cr\kern2.7pt{\raisebox{0.2ex}{$\cdot\mkern1.3mu$}}}}}
\def\mRA{\mbox{\,\raisebox{-0.39ex}{\rotatebox[origin=c]{-90}{\TriangleUp}}\,}}
\newcommand{\mra}{\mbox{$\,-{\mkern-3mu\vartriangleright}\,$}}
\newcommand{\mla}{\mbox{$\,{\vartriangleleft\mkern-8mu-}\,$}}
\newcommand{\mcra}{%
\mathrel{\ooalign{$\,{\vartriangleright\,}$\cr\kern3pt{\raisebox{0ex}{$\cdot$}}}}}
\newcommand{\mcraline}{%
-{\mkern-6mu{\mathrel{\ooalign{$\,{\vartriangleright\,}$\cr\kern3pt{\raisebox{0ex}{$\cdot$}}}}}}}
\newcommand{\mdraline}{%
{\mathrel{\ooalign{$\,{\vartriangleright\,}$\cr\kern3pt{\raisebox{0ex}{$\cdot$}}}}}{\mkern-6mu}-}
\newcommand{\cra}{%
\mathrel{\ooalign{$\,-{\mkern-3mu\vartriangleright\,}$\cr\kern8pt{\raisebox{0ex}{$\cdot$}}}}}
\def\mSRA{\mbox{\,\raisebox{0.43ex}{$\thicksim\mkern-1.3mu$}\rotatebox[origin=c]{3.9999}{\TriangleRight}\,}}
\def\msra{\mbox{$\,\sim{\mkern-8mu\vartriangleright}\,$}}
\def\mBRA{\mbox{\,\raisebox{-0.39ex}{\rotatebox[origin=c]{-90}{\FilledTriangleUp}}\,}}
\newcommand{\mcBRA}{%
\mathrel{\ooalign{
                  \raisebox{-0.3ex}{$\rotatebox[origin=c]{-90}{$\mbox{\FilledTriangleUp}$}$}
                                                                            \cr\kern2.7pt{\raisebox{0.2ex}{${\color{white}\cdot}$}}}}}
\newcommand{\mcbra}{%
\mathrel{\ooalign{$\,-{\mkern-3mu\blacktriangleright\,}$\cr\kern8pt{\raisebox{0ex}{$\cdot$}}}}}
\def\mLA{\mbox{\,\raisebox{-0.39ex}{\rotatebox[origin=c]{90}{\TriangleUp}}\,}}
\newcommand{\mcLA}{%
\mathrel{\ooalign{
                  \raisebox{-0.3ex}{$\rotatebox[origin=c]{90}{$\mbox{\TriangleUp}$}$}
                                                                                     \cr\kern5.5pt{\raisebox{0.2ex}{$\cdot$}}
                                                                                                                              }}}
\def\la{\mbox{$\,\vartriangleleft{\mkern-8mu-}\,$}}
\newcommand{\mcla}{%
\mathrel{\ooalign{$\,{\vartriangleleft\,}$\cr\kern5pt{\raisebox{0ex}{$\cdot$}}}}}
\newcommand{\mclaline}{%
-{\mkern-6mu{\mathrel{\ooalign{$\,{\vartriangleleft\,}$\cr\kern5pt{\raisebox{0ex}{$\cdot$}}}}}}}
\def\nla{\mbox{$\,\vartriangleleft\,$}}
\def\mSLA{\mbox{\,\rotatebox[origin=c]{-3.9999}{\TriangleLeft}\raisebox{0.43ex}{$\mkern-1.3mu\thicksim$}\,}}
\def\msla{\mbox{$\,\vartriangleleft{\mkern-8mu\sim}\,$}}
\def\mBLA{\mbox{\,\raisebox{-0.39ex}{\rotatebox[origin=c]{90}{\FilledTriangleUp}}\,}}
\newcommand{\mcBLA}{%
\mathrel{\ooalign{
                  \raisebox{-0.3ex}{$\rotatebox[origin=c]{90}{$\mbox{\FilledTriangleUp}$}$}
                                                                                     \cr\kern5.5pt{\raisebox{0.2ex}{${\color{white}\cdot}$}}
                                                                                                                                            }}}
\def\mbla{\mbox{$\,\blacktriangleleft\,$}}
\def\bla{\mbox{$\,\blacktriangleleft{\mkern-8mu-}\,$}}
\def\mSBRA{\mbox{\,\raisebox{0.43ex}{$\thicksim\mkern-1.3mu$}\FilledTriangleRight}\,}
\def\msbra{\mbox{$\,\sim{\mkern-8mu\blacktriangleright}\,$}}
\def\mSBLA{\mbox{\,\FilledTriangleLeft\raisebox{0.43ex}{$\mkern-1.3mu\thicksim$}\,}}
\def\msbla{\mbox{$\,\blacktriangleleft{\mkern-8mu\sim}\,$}}
  \numberwithin{equation}{section}
\newcommand{\ls}{\lbrack}
\newcommand{\rs}{\rbrack}
\newcommand{\lc}{\langle}
\newcommand{\rc}{\rangle}
\newcommand{\pand}{\wedge}
\newcommand{\por}{\vee}
\newcommand{\pra}{\rightarrow}
\def\pdra{\mbox{$\,{\rotatebox[origin=c]{-90}{\raisebox{0.12ex}{$\pand$}}{\mkern-2mu\aol}}\,$}}
\def\pdla{\mbox{\rotatebox[origin=c]{180}{$\,{\rotatebox[origin=c]{-90}{\raisebox{0.12ex}{$\pand$}}{\mkern-2mu\aol}}\,$}}}
\newcommand{\gI}{%
\mathrel{\ooalign{$\mbox{T}$\cr\kern0pt$\mbox{\rotatebox[origin=c]{180}{T}}$}}}
\def\aga{\texttt{a}}
\def\agA{{\Large{\texttt{a}}}}
\def\agB{\Large{\texttt{b}}}
\def\aol{\rule[0.5865ex]{1.38ex}{0.1ex}}
\newcommand{\RESagaProxy}{\,\rotatebox[origin=c]{90}{$\{\rotatebox[origin=c]{-90}{$\aga$}\}$}\,}
\newcommand{\RESmbandthreeDia}{\,
$\raisebox{0.4ex}{$
\widehat{\phantom{\aga \!\! \mand_3 \!\! F}}$}
\!\!\!\!\!\!\!\!\!\!\!\!\!\!$
\aga  \blacktriangle_3  F$
\!\!\!\!\!\!\!\!\!\!\!\!\!\!\!\!\!
\raisebox{-0.7ex}{\rotatebox[origin=c]{-180}{$\widehat{\phantom{\aga \!\! \mand_3 \!\! F}}$}}$ }
\newcommand{\RESalphaProxy}{\,\rotatebox[origin=c]{90}{$\{\rotatebox[origin=c]{-90}{$\alpha$}\}$}\,}
\newcommand{\RESalphajProxy}{\,\rotatebox[origin=c]{90}{$\{\rotatebox[origin=c]{-90}{$\!\alpha_{\!j}$}\,\}$}\,}
\newcommand{\RESbetakProxy}{\,\rotatebox[origin=c]{90}{$\{\rotatebox[origin=c]{-90}{$\!\beta_{\!k}$}\,\}$}\,}
\newcommand{\RESbetaProxy}{\,\rotatebox[origin=c]{90}{$\{\rotatebox[origin=c]{-90}{$\beta$}\}$}\,}
\newcommand{\RESalphaBox}{\,\rotatebox[origin=c]{90}{$[\rotatebox[origin=c]{-90}{$\alpha$}]$}\,}
\newcommand{\RESalphaDia}{\,\rotatebox[origin=c]{90}{$\langle\rotatebox[origin=c]{-90}{$\alpha$}\rangle$}\,}
\newcommand{\Bigsemic}{\mbox{\Large {\bf ;}}}
\newcommand{\WKnowProxy}[2]{%
  {\mathbin{\ooalign{$#1\circ#2 $\cr\hidewidth
   \raise.155ex\hbox{$#1{\scriptstyle{\ast}}#2$}\hidewidth\cr  }}}}
\newcommand{\BKnowProxy}[2]{%
  {\mathbin{\ooalign{$#1\bullet#2 $\cr\hidewidth
   \raise.155ex\hbox{$#1{\scriptstyle{\color{white}{\ast}}}#2$}\hidewidth\cr  }}}}
\newcommand{\fns}{\footnotesize}
\newcommand{\mc}{\multicolumn}
\newtheorem{proposition}[theorem]{Proposition}
\theoremstyle{definition}
\newtheorem{defn}{Definition}
\newcommand{\commment}[1]{}
\def\mbra{\mbox{$\,-{\mkern-3mu\blacktriangleright}\,$}}
\newcommand{\RESagaDia}{\,\rotatebox[origin=c]{90}{$\langle\rotatebox[origin=c]{-90}{$\aga$}\rangle$}\,}
\newcommand{\RESagaBox}{\,\rotatebox[origin=c]{90}{$[\rotatebox[origin=c]{-90}{$\aga$}]$}\,}
\begin{document}

\title{Multi-type Display Calculus for Dynamic Epistemic Logic}
\author{Sabine Frittella\footnote{Laboratoire d'Informatique Fondamentale de Marseille (LIF) - Aix-Marseille Universit\'{e}.}\;\; Giuseppe Greco\footnote{Department of Values, Technology and Innovation - TU Delft.}\;\; Alexander Kurz\footnote{Department of Computer Science - University of Leicester.}\\ Alessandra Palmigiano\footnote{Department of Values, Technology and Innovation - TU Delft. The research of the second and fourth author has been made possible by the NWO Vidi grant 016.138.314, by the NWO Aspasia grant 015.008.054, and by a Delft Technology Fellowship awarded in 2013 to the fourth author.}\;\; Vlasta Sikimi\'c\footnote{The Institute of Philosophy of the Faculty of Philosophy - University of Belgrade.}}

\maketitle

\begin{abstract}

In the present paper, we introduce a  multi-type display calculus for dynamic epistemic logic, which we refer to as Dynamic Calculus. The display-approach is suitable to modularly chart the space of dynamic epistemic logics on weaker-than-classical propositional base. The presence of types endows the language of the Dynamic Calculus with additional expressivity,  allows for a smooth proof-theoretic treatment, and paves the way towards a general methodology for the design of proof systems for the generality of dynamic logics, and certainly beyond dynamic epistemic logic.  We prove that the Dynamic Calculus adequately captures Baltag-Moss-Solecki's  dynamic epistemic logic, and enjoys Belnap-style cut elimination.\\ 
\emph{Keywords}: display calculus, dynamic epistemic logic, modularity, multi-type system.\\
{\em Math.\ Subject Class.\ 2010:} 	03B42,  03B20, 03B60, 03B45, 03F03, 03G10, 03A99. 

\end{abstract}

\tableofcontents

\section{Introduction}

\paragraph{Motivation.} The range of nonclassical logics has been rapidly expanding, driven by influences from other fields which have opened up new opportunities for applications. The logical formalisms which have been developed as a result of this interaction have attracted the interest of a research community wider than the logicians, and their theory has been intensively investigated, especially w.r.t.\ their semantics and computational complexity.

However, most of these logics lack a comparable proof-theoretic development. More often than not, the hurdles preventing a standard proof-theoretic development for these logics are due precisely to the very features which make them suitable for applications, such as e.g.\ their not being closed under uniform substitution, or the existence of certain interactions between logical connectives, which cannot be expressed within the language itself. 

 A case in point is  Baltag-Moss-Solecki's logic of epistemic actions and knowledge (EAK), which is the main focus of the present paper.  The Hilbert-style presentation of EAK prominently features non schematic axioms such as \[[\alpha] p\leftrightarrow (Pre(\alpha)\rightarrow p),\] where the variable $p$ ranges over  atomic propositions, and $Pre(\alpha)$ is a meta-linguistic abbreviation for an arbitrary formula, and axioms such as \[[\alpha][\aga] A\leftrightarrow (Pre(\alpha)\rightarrow \bigwedge\{[\aga][\beta] A\mid \alpha\aga \beta\}),\] in which the extra-linguistic label $\alpha\aga \beta$ expresses the fact that actions $\alpha$ and $\beta$ are indistinguishable for agent $\aga$.

Difficulties posed by features such as these caused the existing proposals of calculi in the literature to be often ad hoc, not easily generalizable e.g.\ to other logics, and more in general lacking a smooth proof-theoretic behaviour. In particular, the difficulty in smoothly transferring results from one logic to another is a problem in itself, since logics such as EAK typically come in large families. Hence, proof-theoretic approaches which uniformly apply to each logic in a given family are in high demand (for an expanded discussion of the existing proof systems for dynamic epistemic logics, see \cite[Section 3]{GAV}).

The problem of the transfer of results, tools and methodologies has been addressed in the proof-theoretic literature for the families of substructural and modal logics, and has given rise to the development of several generalizations of Gentzen sequent calculi (such as hyper-, higher level-, display- or labelled-sequent calculi).

\paragraph{Contribution.} The present paper focuses on the core technical aspects of a proof-theoretic methodology and set-up closely linked to Belnap's display calculi \cite{Belnap}.
Specifically, our main contribution is the introduction of a methodology for the design of display calculi based on   {\em multi-type languages}.
In  the case study provided by EAK,
we start by observing that having to resort to the label $\alpha\aga\beta$ is symptomatic of the fact that the language of EAK  lacks the necessary expressivity to autonomously capture the piece of information encoded in the label. 
%
%
%

In order to provide the desired additional expressivity, we introduce a language in which not only formulas are generated from formulas and actions (as it happens in the symbol $\langle\alpha \rangle A$) and formulas are generated from formulas and agents (as it happens in the symbol $\langle\aga \rangle A$), but also {\em actions} are generated from the interaction between {\em agents} and {\em actions}, which is precisely what the label $\alpha\aga\beta$ is about.

In the multi-type language for EAK introduced in the present paper, each generation step mentioned above is explicitly accounted for via special connectives taking arguments of {\em different types}. In principle, more than one alternative is possible in this respect; our choice for the present setting consists of the following types: $\mathsf{Ag}$ for agents, $\mathsf{Fnc}$ for functional actions, $\mathsf{Act}$ for actions, and $\mathsf{Fm}$ for formulas. Hence, the present setting introduces a separation between functional, i.e.\ deterministic actions, of type $\mathsf{Fnc}$, and  possibly nondeterministic actions, of type $\mathsf{Act}$  (see discussion at the end of section \ref{DC}).

The proposed calculus provides an interesting and in our opinion very promising {\em methodological} platform towards the uniform development of a general proof-theoretic account of all dynamic logics, 
and also, from a purely structurally proof-theoretic viewpoint, for clarifying and sharpening the formulation of  criteria leading to the statement and proof of meta-theoretic results such as  Belnap-style  cut-elimination  (see Section \ref{refinements}).

\paragraph{Structure of the paper.}  
In Section 2, we collect the relevant preliminaries on EAK, display calculi, and the (single-type) display calculus D'.EAK. In Section \ref{sec:multi}, we sketch the general features of the environment of multi-type display calculi,  extend Wansing's definition of properly displayable calculi to the multi-type setting, and  prove the corresponding extension of Belnap's cut elimination metatheorem. In Section \ref{DC}, we  propose a novel display calculus for EAK, which we refer to as {\em Dynamic Calculus}, and which concretely exemplifies the notion of multi-type display calculus. 
In Sections \ref{sec:soundness}-\ref{sec : safe virtual adjoints}, we prove that the Dynamic Calculus adequately captures EAK, and  enjoys  Belnap-style cut elimination. In Section \ref{Conclusion}, we collect some conclusions and indicate further directions.  The routine proofs and derivations are collected in  Section \ref{Appendix}, the appendix.

\section{Preliminaries}
    In the present section, we collect the needed preliminaries: in  \ref{sec:EAK}, we review the logic of epistemic actions and knowledge.
Our presentation  slightly departs from \cite{BMS}, and closely follows \cite{AMM, KP}.\footnote{The account of  EAK developed in \cite{AMM, KP} is specifically tailored to facilitate the dual characterization at the base of the definition of the intuitionistic counterparts of  EAK, which the calculus
introduced in Section \ref{DC} takes as basic.
So for the sake of a tighter presentation we include it here.}  In  \ref{ssec:IEAK}, we briefly review the intuitionistic version of EAK, the axiomatization of which is directly captured in the rules of the calculus introduced in Section \ref{DC}. In  \ref{ssec:DisplayLogic}, we sketch the main relevant features of display calculi. 
In  \ref{ssec:D'.EAK prelim}, we briefly report on the (single-type) display calculus for EAK introduced in \cite{GAV}.
\subsection{The logic of epistemic actions and knowledge}
\label{sec:EAK}

The logic of epistemic actions and knowledge (further on EAK) is a logical framework which combines a multi-modal classical logic with a dynamic-type propositional logic. Static modalities in EAK are parametrized with agents, and their intended interpretation is epistemic, that is, $\langle \aga\rangle A$ intuitively stands for `agent $\aga$ thinks that $A$ might be the case'. Dynamic modalities in EAK are parametrized with epistemic \emph{action-structures} (defined below) and their intended interpretation is analogous to that of dynamic modalities in e.g.\ Propositional Dynamic Logic. That is, $\langle \alpha\rangle A$ intuitively stands for `the action $\alpha$ is executable, and after its execution $A$ is the case'. Informally, action structures loosely resemble Kripke models, and encode information about epistemic actions such as e.g.\ public announcements, private announcements to a group of agents, with or without (actual or suspected) wiretapping, etc. Action structures consist of a finite nonempty domain of action-states, a designated state, binary relations on the domain for each agent, and a precondition map. Each state in the domain of an action structure $\alpha$ represents the possible appearance of the epistemic action encoded by $\alpha$. The designated state represents the action actually taking place. Each binary relation of an action structure represents the type, or degree, of uncertainty entertained by the agent associated with the given binary relation about the action taking place; for instance, the agents' knowledge, ignorance, suspicions. Finally, the precondition function maps each state in the domain to a formula, which is intended to describe the state of affairs under which it is possible to execute the (appearing) action encoded by the given state. This formula encodes the \emph{preconditions} of the action-state. The reader is referred to \cite{BMS} for further intuition and concrete examples.

Let \textsf{AtProp} be a countable set of atomic propositions, and $\mathsf{Ag}$ be a  nonempty set (of agents). The set $\mathcal{L}$ of  formulas $A$  of the logic of epistemic actions and knowledge (EAK), and the set $\mathsf{Act}(\mathcal{L})$ of the {\em action structures} $\alpha$ {\em over} $\mathcal{L}$ are defined simultaneously as follows:

\begin{center}
$A := p\in \mathsf{AtProp} \mid \neg A \mid A\vee A \mid \langle\aga\rangle A \mid \langle\alpha\rangle A\;\; \ \ (\alpha\in \mathsf{Act}(\mathcal{L}), \aga\in \mathsf{Ag}),$

\end{center}
where an {\em action structure over} $\mathcal{L}$ is a tuple  $\alpha = (K, k, (\alpha_\aga)_{\aga\in\mathsf{Ag}}, Pre_\alpha)$, such that $K$ is a finite nonempty set,  $k\in K$,  $\alpha_\aga\subseteq K\times K$  and $Pre_\alpha: K\to \mathcal{L}$.

The symbol $Pre(\alpha)$ stands for $Pre_\alpha(k)$.  For each action structure $\alpha$ and every $i\in K$, let $\alpha_i := (K, i, (\alpha_\aga)_{\aga\in\mathsf{Ag}}, Pre_\alpha)$. Intuitively, the family of action structures $\{\alpha_i\mid k\alpha_\aga i\}$ encodes the uncertainty of agent $\aga$ about the action $\alpha=\alpha_k$ that is actually taking place. Perhaps the best known epistemic actions are {\em public announcements}, formalized as action structures $\alpha$ such that $K = \{k\}$, and $\alpha_\aga = \{(k, k)\}$ for all $\aga\in\mathsf{Ag}$. The logic of public announcements (PAL) \cite{Plaza} can then be subsumed as the fragment of EAK restricted to action structures of the form described above.
The connectives $\top$, $\bot$, $\wedge$, $\rightarrow$ and $\leftrightarrow$ are defined as usual.

Standard models for EAK are relational structures $M = (W, (R_\aga)_{\aga\in \mathsf{Ag}}, V)$ such that $W$ is a nonempty set, $R_\aga\subseteq W\times W$ for each $\aga\in \mathsf{Ag}$, and $V:\mathsf{AtProp}\to \mathcal{P}(W)$. The interpretation of the static fragment of the language is standard. For every Kripke frame $\mathcal{F} = (W, (R_\aga)_{\aga\in\mathsf{Ag}})$ and each  action structure  $\alpha$, let the Kripke frame $\coprod_{\alpha}\mathcal{F} : = (\coprod_{K}W, ((R\times \alpha)_\aga)_{\aga\in \mathsf{Ag}})$ be defined
as follows: $\coprod_{K}W$ is the $|K|$-fold coproduct of $W$ (which is set-isomorphic to $W\times K$), and $(R\times \alpha)_\aga$ is a   binary relation on $\coprod_{K}W$ defined as $$(w, i)(R\times \alpha)_\aga (u, j)\quad \mbox{ iff }\quad w R_\aga u\ \mbox{ and }\ i\alpha_\aga j.$$

 For every model $M$ and each action structure $\alpha$, let $$\coprod_{\alpha}M := (\coprod_{\alpha}\mathcal{F}, \coprod_{K}V )$$ be such that $\coprod_{\alpha}\mathcal{F}$ is defined as above, and $(\coprod_{K}V)(p): = \coprod_{K}V(p)$ for every $p\in \mathsf{AtProp}$. Finally,  let the {\em update} of $M$ with the action structure $\alpha$ be the submodel $M^\alpha: = (W^\alpha, (R^\alpha_\aga)_{\aga\in \mathsf{Ag}}, V^\alpha)$ of $\coprod_{\alpha}M$ the domain of which is the subset $$W^\alpha: = \{(w, j)\in \coprod_{K}W\mid M, w\Vdash Pre_\alpha(j)\}.$$
 Given this preliminary definition, formulas of the form $\langle\alpha\rangle A$ are interpreted as follows:
$$M, w\Vdash \langle\alpha \rangle A\quad \mbox{ iff } \quad M, w\Vdash  Pre_\alpha(k)  \mbox{ and } M^\alpha, (w, k)\Vdash A.$$

The model $M^\alpha$ is intended to encode the (factual and epistemic) state of affairs after the execution of the action $\alpha$. Summing up, the construction of $M^\alpha$ is done in two stages: in the first stage, as many copies of the original model $M$ are taken as there are `epistemic potential appearances' of the given action (encoded by the action states in the domain of $\alpha$); in the second stage, states in the copies are removed if their associated original state does not satisfy the preconditions of their paired action-state.


A complete axiomatization of EAK consists of copies of the axioms and rules of the minimal normal modal logic K  for each modal operator, either epistemic or dynamic, plus the following (interaction) axioms:
\begin{eqnarray}
\langle\alpha\rangle p & \leftrightarrow & (Pre(\alpha)\wedge p); \label{eq:facts}\\
\langle \alpha\rangle \neg A & \leftrightarrow & (Pre(\alpha)\wedge \neg\langle \alpha\rangle A ); \label{eq:neg}\\
\langle \alpha\rangle (A\vee B) & \leftrightarrow & (\langle \alpha\rangle A\vee \langle \alpha\rangle B ); \label{eq:vee}\\
\langle \alpha\rangle \langle\aga\rangle A & \leftrightarrow & (Pre(\alpha)\wedge \bigvee\{\langle\aga\rangle\langle\alpha_i\rangle A\mid k\alpha_\aga i\}). \label{eq:interact axiom}
\end{eqnarray}

The interaction axioms above can be understood as attempts at defining the meaning of any given dynamic modality $\langle\alpha\rangle$ in terms of its interaction with the other connectives. In particular, while axioms \eqref{eq:neg} and \eqref{eq:vee} occur also in other dynamic logics such as PDL, axioms \eqref{eq:facts} and \eqref{eq:interact axiom} capture the specific behaviour of epistemic actions. Specifically, axiom \eqref{eq:facts} encodes the fact that epistemic actions do not change the factual state of affairs, and axiom \eqref{eq:interact axiom} plausibly rephrases the fact that `after the execution of $\alpha$, agent $\aga$ thinks that $A$ might be the case' in terms of `there being some epistemic appearance of $\alpha$ to $\aga$ such that $\aga$ thinks that, after its execution, $A$ is the case'.
An interesting aspect of these axioms is that they work as rewriting rules which can be iteratively used to transform any EAK-formula into an equivalent one free of dynamic modalities. Hence, the completeness of EAK follows from the completeness of its static fragment, and EAK is not more expressive than its static fragment. However, and interestingly, there is an exponential gap in succinctness between equivalent formulas in the two languages \cite{Luz}.

Action structures are one among many possible ways to represent actions. Following \cite{GKPLori},
we prefer to keep a black-box perspective on actions, and to identify agents $\aga$ with the indistinguishability relation they induce on actions; so, in the remainder of the article, the role of the action-structures $\alpha_i$ for $k\alpha i$ will be played by actions $\beta$ such that $\alpha\aga\beta$, allowing us to reformulate \eqref{eq:interact axiom} as
$$\langle \alpha\rangle \langle\aga\rangle A \  \leftrightarrow \  (Pre(\alpha)\wedge \bigvee\{\langle\aga\rangle\langle\beta\rangle A \mid \alpha\aga\beta\}).$$

\commment{
Let \textsf{AtProp} be a countable set of atomic propositions. The set $\mathcal{L}$ of the formulas $A$ of (the single-agent\footnote{The multi-agent generalization of this simpler version is straightforward, and is given by taking the indexed version of the modal operators, axioms, and then by taking the interpreting relations (both in the models and in the action structures) over a set of agents.} version of) the logic of epistemic actions and knowledge (EAK), and the set $\mathsf{Act}(\mathcal{L})$ of the {\em action structures} $\alpha$ {\em over} $\mathcal{L}$ are defined simultaneously as follows:
\begin{center}
$A := p\in \mathsf{AtProp} \mid \neg A \mid A\vee A \mid \Diamond A \mid \langle\alpha\rangle A\;\; (\alpha\in \mathsf{Act}(\mathcal{L})),$
\end{center}
where an {\em action structure over} $\mathcal{L}$ is a tuple  $\alpha = (K, k, \alpha, Pre_\alpha)$, such that $K$ is a finite nonempty set,  $k\in K$,  $\alpha\subseteq K\times K$ and $Pre_\alpha: K\to \mathcal{L}$. Notice that, following \cite{KP}, the symbol $\alpha$ denotes {\em both} the action structure {\em and} the accessibility relation of the action structure. Unless explicitly specified otherwise, occurrences of this symbol are to be interpreted contextually: for instance, in  $j\alpha k$, the symbol $\alpha$ denotes the relation; in  $M^{\alpha}$, the symbol $\alpha$ denotes the action structure. Of course, in the multi-agent setting, each action structure comes equipped with {\em a collection} of accessibility relations indexed in the set of agents, and then the abuse of notation disappears.

The symbol $Pre(\alpha)$ stands for $Pre_\alpha(k)$. Let $\alpha_i = (K, i, \alpha, Pre_\alpha)$ for each action structure $\alpha = (K, k, \alpha, Pre_\alpha)$ and every $i\in K$. Intuitively, the actions $\alpha_i$ for $k\alpha i$ are intended to represent the uncertainty of the (unique) agent about the action that is actually taking place. Perhaps the best known actions are {\em public announcements}, formalized as action structures such that $K = \{k\}$, and $\alpha = \{(k, k)\}$. The logic of public announcements (PAL) \cite{Plaza} can then be subsumed as the fragment of EAK restricted to action structures of the form described above.
The connectives $\top$, $\bot$, $\wedge$, $\rightarrow$ and $\leftrightarrow$ are defined as usual.

The standard models for EAK are relational structures $M = (W, R, V)$ such that $W$ is a nonempty set, $R\subseteq W\times W$, and $V:\mathsf{AtProp}\to \mathcal{P}(W)$. The interpretation of the static fragment of the language is standard. For every Kripke frame $\mathcal{F} = (W, R)$ and each  $\alpha\subseteq K\times K$, let the Kripke frame $\coprod_{\alpha}\mathcal{F} : = (\coprod_{K}W, R\times \alpha)$ be defined\footnote{This definition is of course intended to be applied to relations $\alpha$ which are part of the specification of some action structure $\alpha$; in these cases, the symbol $\alpha$ in $\coprod_{\alpha}\mathcal{F}$ will be  understood as the action structure. This is why the abuse of notation turns out to be useful.} as follows: $\coprod_{K}W$ is the $|K|$-fold coproduct of $W$ (which is set-isomorphic to $W\times K$), and $R\times \alpha$ is the binary relation on $\coprod_{K}W$ defined as $$(w, i)(R\times \alpha) (u, j)\quad \mbox{ iff }\quad w R u\ \mbox{ and }\ i\alpha j.$$

 For every model $M = (W, R, V)$ and each action structure $\alpha = (K, k, \alpha, Pre_\alpha)$, let $$\coprod_{\alpha}M := (\coprod_{K}W, R\times \alpha, \coprod_{K}V )$$ be such that its underlying frame is defined as above, and $(\coprod_{K}V)(p): = \coprod_{K}V(p)$ for every $p\in \mathsf{AtProp}$. Finally,  let the {\em update} of $M$ with the action structure $\alpha$ be the submodel $M^\alpha: = (W^\alpha, R^\alpha, V^\alpha)$ of $\coprod_{\alpha}M$ the domain of which is the subset $$W^\alpha: = \{(w, j)\in \coprod_{K}W\mid M, w\Vdash Pre_\alpha(j)\}.$$
 Given this preliminary definition, formulas of the form $\langle\alpha\rangle A$ are interpreted as follows:
$$M, w\Vdash \langle\alpha \rangle A\quad \mbox{ iff } \quad M, w\Vdash  Pre_\alpha(k)  \mbox{ and } M^\alpha, (w, k)\Vdash A.$$


A complete axiomatization of EAK is given  by the axioms and rules for the minimal normal modal logic K, plus necessitation rules $\vdash A / \vdash [\alpha] A$ for each action structure $\alpha$, plus  the following axioms for each $\alpha$:
\begin{eqnarray}
\langle\alpha\rangle p & \leftrightarrow & (Pre(\alpha)\wedge p);\\
\langle \alpha\rangle \neg A & \leftrightarrow & (Pre(\alpha)\wedge \neg\langle \alpha\rangle A ); \\
\langle \alpha\rangle (A\vee B) & \leftrightarrow & (\langle \alpha\rangle A\vee \langle \alpha\rangle B ); \\
\langle \alpha\rangle \Diamond A & \leftrightarrow & (Pre(\alpha)\wedge \bigvee\{\Diamond\langle\alpha_i\rangle A\mid k\alpha i\}). \label{eq:interact axiom}
\end{eqnarray}

Action structures are one among many possible ways to represent actions. Following \cite{GKPLori},
we prefer to keep a black-box perspective on actions, and to identify agents $\aga$ with the indistinguishability relation they induce on actions; so, in the remainder of the article, the role of the action-structures $\alpha_i$ for $k\alpha i$ will be played by actions $\beta$ such that $\alpha\aga\beta$.
}

\subsection{Intuitionistic  EAK}\label{ssec:IEAK}
The (single-agent version of the) intuitionistic logic of epistemic actions and knowledge (IEAK) has been introduced in \cite{KP}. In the present subsection we report on its multi-agent version. The reason for mentioning this logic in the preliminaries is that
the  calculus introduced in Section \ref{DC} takes the Hilbert-style axiomatization of IEAK---rather than that of its Boolean counterpart---as basic, and many of its rules are motivated by axioms which define the intuitionistic setting   (see Section \ref{DC} for further details on this topic).\footnote{The Boolean setting is captured by adding the so-called {\em Grishin  rules} (see page \pageref{page : Grishin rules}) to the basic framework.}.

Let \textsf{AtProp} be a countable set of atomic propositions, and let $\mathsf{Ag}$ be a nonempty set (of agents). The set $\mathcal{L}$(m-IK) of the formulas $A$ of the multi-modal version m-IK of Fischer Servi  intuitionistic modal logic IK (cf.\ \cite{FS84}) are inductively defined  as follows:
\[A := p\in \mathsf{AtProp} \mid \bot  \mid A\vee A \mid A\wedge A\mid A\rightarrow A \mid \langle\aga\rangle A \mid [\aga] A\  \ \  (\aga\in \mathsf{Ag})\]

Let $\neg A$ abbreviate as usual $A \rightarrow \bot$.
The Hilbert-style presentation of   m-IK is reported in Table~\ref{table:mIK}.

\begin{table}
\begin{center}
\begin{tabular}{rl}
\mc{2}{c}{\textbf{Axioms}}                                                         \\
    & $A\rightarrow (B\rightarrow A)$                                                   \\
     & $(A\rightarrow (B\rightarrow C))\rightarrow ((A\rightarrow B)\rightarrow (A\rightarrow C))$                   \\

     & $A\rightarrow (B\rightarrow A\pand B)$                                            \\
    & $A\pand B \rightarrow A$                                                   \\
     & $A\pand B \rightarrow B$                                                   \\

     & $A\rightarrow A \vee B$                                                    \\
    & $B\rightarrow A \vee B$                                                    \\
    & $(A\rightarrow C)\rightarrow ((B\rightarrow C)\rightarrow (A\vee B\rightarrow C))$                     \\

      & $\bot\rightarrow A$                                                         \\
      ~\\

    & $[\aga] (A\rightarrow B)\rightarrow ([\aga] A\rightarrow [\aga] B)$                 \\
    & $\langle\aga\rangle (A\vee B)\rightarrow \langle\aga\rangle A\vee \langle\aga\rangle B$                   \\
  & $\neg\langle\aga\rangle \bot$                                                  \\
FS1     & $\langle\aga\rangle (A\rightarrow B)\rightarrow ([\aga]  A\rightarrow \langle\aga\rangle B)$                 \\
FS2     & $(\langle\aga\rangle A\rightarrow [\aga] B)\rightarrow [\aga]  (A\rightarrow B)$                 \\

\mc{2}{c}{}                                                                        \\
\mc{2}{c}{\textbf{Inference Rules}}                                                \\
MP          & if $\vdash A\rightarrow B$ and $\vdash A$, then $\vdash B$                  \\
Nec        & if $\vdash A$, then $\vdash [\aga]  A$                              \\
\end{tabular}
\end{center}
\caption{Axioms and rules of the intuitionistic modal logic m-IK}
\label{table:mIK}
\end{table}

To define the language of IEAK, let \textsf{AtProp} be a countable set of atomic propositions, and let $\mathsf{Ag}$ be a nonempty set. The set $\mathcal{L}$(IEAK) of formulas $A$ of  the intuitionistic logic of epistemic actions and knowledge (IEAK), and the set $\mathsf{Act}(\mathcal{L})$ of the {\em action structures} $\alpha$ {\em over} $\mathcal{L}$ are defined simultaneously as follows:
\begin{center}
$A := p\in \mathsf{AtProp} \mid \bot\mid A\rightarrow A \mid A\vee A \mid A\wedge A\mid   \langle\aga\rangle A\mid [\aga] A\mid \langle\alpha\rangle A\mid [\alpha] A,$
\end{center}
where $\aga\in \mathsf{Ag}$, and an {\em action structure} $\alpha$ {\em over} $\mathcal{L}$(IEAK) is defined in a completely analogous way as action structures in the classical case, the only difference lying in the codomain of $Pre_\alpha$. Then, the logic IEAK is defined in a Hilbert-style presentation which includes the axioms and rules of m-IK plus the axioms  and rules  in Table~\ref{table:IEAK}.

\begin{table}
\begin{center}
\begin{tabular}{rl}
\mc{2}{c}{\textbf{Interaction Axioms}}                                                                       \\
 & $\lc\alpha\rc p \leftrightarrow Pre(\alpha) \pand p$                                                           \\
 & $\ls\alpha\rs p  \leftrightarrow Pre(\alpha) \rightarrow  p$                                                            \\
 & $\lc\alpha\rc \bot \leftrightarrow \bot$                                                                       \\
 & $\lc\alpha\rc \top \leftrightarrow Pre(\alpha)$                                                                \\
 & $\ls\alpha\rs \top \leftrightarrow  \top$                                                                       \\
 & $\ls\alpha\rs \bot \leftrightarrow \neg Pre(\alpha)$                                                           \\
 & $\ls\alpha\rs (A\pand B) \leftrightarrow \ls\alpha\rs A \pand \ls\alpha\rs B$                                  \\
 & $\lc\alpha\rc (A\pand B)\leftrightarrow  \lc\alpha\rc A \pand \lc\alpha\rc B$                                  \\
   & $\lc\alpha\rc (A\vee B) \leftrightarrow \lc\alpha\rc A \vee \lc\alpha\rc B$                                    \\
   & $\ls\alpha\rs (A\vee B) \leftrightarrow Pre(\alpha) \rightarrow  (\lc\alpha\rc A \vee \lc\alpha\rc B)$                 \\
  & $\lc\alpha\rc (A \rightarrow B) \leftrightarrow Pre(\alpha) \pand (\lc\alpha\rc A \rightarrow \lc\alpha\rc B)$                 \\
  & $\ls\alpha\rs (A \rightarrow B)\leftrightarrow \lc\alpha\rc A \rightarrow \lc\alpha\rc B$                                     \\
   & $\lc\alpha\rc \lc\aga\rc A \leftrightarrow Pre(\alpha)\pand \bigvee\{\lc\aga\rc\lc\beta\rc A \mid \alpha\aga\beta\} $  \\
  & $\ls\alpha\rs\lc\aga\rc A \leftrightarrow Pre(\alpha)\rightarrow \bigvee\{\lc\aga\rc\lc\beta\rc A \mid \alpha\aga\beta\} $   \\
  & $\ls\alpha\rs[\aga] A \leftrightarrow Pre(\alpha)\rightarrow  \bigwedge\{[\aga]\ls\beta\rs A \mid \alpha\aga\beta\}$  \\
   & $\lc\alpha\rc[\aga] A \leftrightarrow Pre(\alpha)\pand \bigwedge\{[\aga]\ls\beta\rs A \mid \alpha\aga\beta\}$ \\
\mc{2}{c}{}                                                                                                  \\
\mc{2}{c}{\textbf{Inference Rules}}                                                                          \\
vNec   & if $\vdash A$, then $\vdash \ls\alpha\rs A$                                                          \\
\end{tabular}
\end{center}
\caption{Axioms and rules of the intuitionistic epistemic logic IEAK}
\label{table:IEAK}
\end{table}

\subsection{Display calculi}\label{ssec:DisplayLogic} 

The first display calculus appears in Belnap's paper  \cite{Belnap}, as a sequent system augmenting and refining Gentzen's basic design of sequent calculi, which admit two types of rules: the structural, and the operational. Belnap's refinement is based on the introduction of a special syntax for the constituents of each sequent, which includes {\em structural connectives} along with {\em logical}, or {\em operational connectives}. For an expanded discussion of these ideas, the reader is referred to \cite{GAV, Wa98, Restall}. 

\paragraph{Structures and display property.} {\em Structures} are built up much in the same way as formulas, taking formulas as atomic components, and applying structural connectives (which are typically 0-ary, unary and binary) so that each structure can be uniquely associated with and identified by its generation tree. Every node of such a generation tree defines a {\em substructure} of the given structure.

\begin{defn} \label{def: display prop} (cf.\ \cite[Section 3.2]{Belnap}) 
A proof system enjoys the {\em full display property}  iff for every sequent $X \vdash Y$ and every substructure $Z$  of either  $X$ or  $Y$, the sequent  $X \vdash Y$ can be  transformed, using the rules of the system, into a logically equivalent sequent which is either of the form $Z \vdash W$ or of the form $W \vdash Z$, for some structure $W$. In the first case, $Z$ is \emph{displayed in precedent position}, and in the second case, $Z$ is \emph{displayed in succedent position}.
The rules enabling this equivalent rewriting  are called \emph{display postulates}.
\end{defn}
In what follows, we will sometimes write e.g.\ $(X\vdash Y)[Z]^{pre}$ (resp.\ $(X\vdash Y)[Z]^{suc}$) to indicate that $Z$ occurs as a substructure in precedent (resp.\ succedent) position within the sequent $X\vdash Y$.
Thanks to the fact that the display postulates are based on adjunction and residuation, it can be proved that exactly one of the two alternatives mentioned in the definition above occurs. In other words, in a system enjoying the display property, any substructure of any sequent  $X \vdash Y$ is always displayed either only in  precedent position or only in succedent position. This is why we can talk about occurrences of substructures in precedent or in succedent position, even if they are nested deep within a given sequent.

\paragraph{Uniform strategy for cut-elimination.} In \cite{Belnap}, a meta-theorem is proven, which gives sufficient conditions in order for a sequent calculus to enjoy cut elimination. This meta-theorem captures the essentials of the cut-elimination procedure Gentzen-style, and is the main technical motivation for the design of Display Logic.  Belnap's meta-theorem  gives a set of eight conditions on sequent calculi, most of which are verified by inspection on the shape of the rules. Together, these conditions guarantee that the cut rule is eliminable in the given sequent calculus, and that the system enjoys the subformula property.  When Belnap's meta-theorem can be applied, it provides a much smoother and more modular route to cut elimination than the
Gentzen-style proofs. Belnap's original meta-theorem has been generalized and refined by various authors (cf.\ \cite{Poggiolesi, Restall, Wa98}). Particularly relevant to us is the notion of {\em properly displayable calculus}, introduced in \cite[Section 4.1]{Wa98}, a generalization of which has been proposed in \cite{GAV}, which in its turn is further generalized in Section \ref{sec:quasi}.

\paragraph{Relativized display property.} The full display property is a key ingredient in the proof of the cut-elimination metatheorem. For instance, it enables a system enjoying it to meet Belnap's condition C$_8$ 
for the cut-elimination metatheorem.
However, it turns out that an analogously good behaviour can be guaranteed of any sequent calculus enjoying the following weaker property:

\begin{defn} \label{def: relativized display prop}  
A proof system enjoys the {\em relativized display property}  iff for every {\em derivable} sequent $X \vdash Y$ and every substructure $Z$  of either  $X$ or  $Y$, the sequent  $X \vdash Y$ can be  transformed, using the rules of the system, into a logically equivalent sequent which is either of the form $Z \vdash W$ or of the form $W \vdash Z$, for some structure $W$.
\end{defn}
The  calculus defined in Section \ref{DC} does not enjoy the full display property, but does enjoy the relativized display property above (more about this in Sections \ref{DC} and \ref{sec : safe virtual adjoints}), which enables it to verify the condition C'$_8$ (see Section \ref{sec:quasi}). More details about it are collected in Section \ref{ssec:cut elimination}.
Finally, notice that the definition of substructures in precedent or succedent position within each sequent can be given in a way which does not rely on the full display property. It is enough to rely on the polarity  of the coordinates of each structural connective: if these polarities are assigned, then for any sequent $X\vdash Y$, if $Z$ is a substructure of $X$, then $Z$ is in precedent (resp.\ succedent) position if, in the generation tree of $X$, the path from $Z$ to the root goes through  an even (resp.\ odd) number of coordinates with negative polarity. If $Z$ is a substructure of $Y$, then $Z$ is in succedent (resp.\ precedent) position if, in the generation tree of $Y$, the path from $Z$ to the root goes through  an even (resp.\ odd) number of coordinates with negative polarity.

\subsection{A single-type display calculus for EAK}
\label{ssec:D'.EAK prelim}

In \cite{GAV}, a display calculus is introduced for EAK, which is shown to be sound w.r.t.\ the final coalgebra semantics, syntactically  complete w.r.t.\ EAK and to enjoy cut-elimination Belnap-style. In the present subsection we briefly report on it, not only for the sake of providing a relevant example of display calculus, but above all because a translation can be established between the operational language of D'.EAK and  of the Dynamic Calculus (cf.\ Section \ref{DC}). This translation  is important for the further treatment of Sections \ref{sec:soundness} and \ref{sec : safe virtual adjoints}.

The structural and operational languages of D'.EAK are expansions of the standard structural and operational propositional  languages with the following (structural and operational) modal operators, indexed by  agents $\aga$ and actions $\alpha$, and (structural and operational) constant symbols:

\begin{center}
\begin{tabular}{|c|c|c|c|c|c|c|c|c|c|c|c|c| l}
\hline
\scriptsize{Structural symbols} &\mc{2}{c|}{$\{\aga\}$}       & \mc{2}{c|}{$\RESagaProxy$} &\mc{2}{c|}{$\{\alpha\}$}       & \mc{2}{c|}{$\RESalphaProxy$} & \mc{2}{c|}{$\Phi_\alpha$}   \\
\hline
\scriptsize{Operational symbols} &  $\lc\aga\rc$ & $\ls\mkern2mu\aga\mkern1mu\rs$& ($\RESagaDia$) & ($\RESagaBox$)&  $\lc\alpha\rc$ & $\ls\mkern2mu\alpha\mkern1mu\rs$  & $\RESalphaDia$ & $\RESalphaBox$  & $1_\alpha$ & $\phantom{1_\alpha}$\\
\hline
\end{tabular}
\end{center}
\noindent The structural connectives $\{\alpha\}$ and $\RESalphaProxy$ correspond to  diamond-type modalities when occurring in precedent position, and to  box-type modalities when occurring in succedent position. The structural and operational constants $\Phi_\alpha$ and $1_\alpha$ are used to capture the proof-theoretic behaviour of the metalinguistic abbreviation $Pre(\alpha)$ at the object-level.  In the rules below, the structural connective $\Phi_\alpha$ can occur only in precedent position. Hence, the structural constant symbol $\Phi_\alpha$ can never be interpreted as anything else than $1_\alpha$. However, a natural way to extend D'.EAK would be to introduce an operational constant symbol $0_\alpha$, intuitively standing for the postconditions of $\alpha$ for each action $\alpha$, and dualize the relevant rules so as to capture the behaviour of postconditions.

The  connectives $\RESalphaDia$ and $\RESalphaBox$ occur within brackets since they are not actually part of the logical language of D'.EAK, but point at the fact that  the structural connective $\RESalphaProxy$ is interpreted in the final coalgebra as the diamond (resp.\ box) associated with the {\em converse} of the relation associated with the epistemic action $\alpha$ (for an expanded discussion on this, the reader is referred to \cite[Section 5.2]{GAV}). The key aspect of the final coalgebra as a semantic environment for EAK is that  it makes it possible to see the dynamic connectives $[\alpha]$ and $\langle\alpha\rangle$ as parts of {\em adjoint pairs}, precisely involving the additional modalities $\RESalphaDia$ and $\RESalphaBox$. Specifically, we have the following (syntactic) adjunction relations $\langle\alpha\rangle \dashv \RESalphaBox $ and $ \RESalphaDia\dashv[\alpha]$: for all formulas $A, B$,
\begin{eqnarray}
\label{eqnarray : adjunction D'EAK}
\langle\alpha\rangle A\vdash B \ \mbox{ iff } \ A\vdash  \RESalphaBox B\quad\quad \RESalphaDia A\vdash B \ \mbox{ iff } \ A\vdash  [\alpha] B
\end{eqnarray}
The reader is referred to \cite[Section 5]{GAV} for a detailed discussion.
The two tables below introduce the  structural rules for the dynamic modalities which have the same shape as those for the agent-indexed modalities, here omitted. 

\begin{center}
{\scriptsize{
\begin{tabular}{rl}
\mc{2}{c}{\normalsize{\textbf{Structural Rules}}} \\
& \\

\AX$\textrm{I} \fCenter  X $
\LeftLabel{$nec_L^{dyn}$}
\UI$ \{\alpha \}\, \textrm{I} \fCenter  X $
\DisplayProof
&
\AX$ X \fCenter  \textrm{I} $
\RightLabel{$nec_R^{dyn}$}
\UI$ X \fCenter   \{\alpha\}\, \textrm{I} $
\DisplayProof  \\

 & \\

\AX$ \textrm{I}  \fCenter X$
\LeftLabel{$^{dyn}nec_L $}
\UI$\RESalphaProxy\, \textrm{I}  \fCenter  X$
\DisplayProof &
\AX$X \fCenter  \textrm{I}  $
\RightLabel{$^{dyn}nec_R$}
\UI$ X \fCenter \RESalphaProxy\, \textrm{I} $
\DisplayProof \\

 & \\

\AX$\{\alpha\} Y > \{\alpha\} Z \fCenter X$
\LeftLabel{$FS^{dyn}_L$}
\UI$\{\alpha\} (Y > Z) \fCenter X$
\DisplayProof &
\AX$Y \fCenter \{\alpha\} X > \{\alpha\} Z$
\RightLabel{$FS^{dyn}_R$}
\UI$Y \fCenter \{\alpha\} (X > Z)$
\DisplayProof \\

 & \\

\AX$\{\alpha\} X\,; \{\alpha\} Y \fCenter Z$
\LeftLabel{$mon^{dyn}_L$}
\UI$\{\alpha\} (X\,; Y) \fCenter Z$
\DisplayProof
&
\AX$Z \fCenter \{\alpha\} Y\,; \{\alpha\} X$
\RightLabel{$mon^{dyn}_R$}
\UI$Z \fCenter \{\alpha\} (Y\,; X)$
\DisplayProof \\

& \\
\AX$\RESalphaProxy Y > \RESalphaProxy X \fCenter Z$
\LeftLabel{$^{dyn}FS_L$}
\UI$\RESalphaProxy (Y > X) \fCenter Z$
\DisplayProof &
\AX$Y \fCenter \RESalphaProxy X > \RESalphaProxy Z$
\RightLabel{$^{dyn}FS_R$}
\UI$Y \fCenter \RESalphaProxy (X > Z)$
\DisplayProof \\

 & \\
\AX$\RESalphaProxy X\,; \RESalphaProxy Y \fCenter Z$
\LeftLabel{$^{dyn}mon_L$}
\UI$\RESalphaProxy (X\,; Y) \fCenter Z$
\DisplayProof &
\AX$Z \fCenter \RESalphaProxy Y\,; \RESalphaProxy X$
\RightLabel{$^{dyn}mon_R$}
\UI$Z \fCenter \RESalphaProxy (Y\,; X)$
\DisplayProof
\\
& \\
\AX$\{\aga\} (X\,; \RESagaProxy Y) \fCenter Z$
\LeftLabel{\scriptsize{$conj$}}
\UI$ \{\aga\} X\,;Y \fCenter Z$
\DisplayProof
&
\AX$X \fCenter \{\aga\}  (Y\,; \RESagaProxy Z) $
\RightLabel{\scriptsize{$conj$}}
\UI$X \fCenter \{\aga\} Y\,;Z $
\DisplayProof
\\
& \\
\AX$\RESagaProxy (X\,; \{\aga \} Y) \fCenter Z$
\LeftLabel{\scriptsize{$conj$}}
\UI$ \RESagaProxy X\,;Y \fCenter Z$
\DisplayProof
&
\AX$X \fCenter \RESagaProxy (Y\,; \{\aga \} Z) $
\RightLabel{\scriptsize{$conj$}}
\UI$X \fCenter  \RESagaProxy Y\,;Z $
\DisplayProof
\\
& \\
\end{tabular}
}}
\end{center}

\noindent
The $conj$-rules  and the $FS$-rules can be shown to be interderivable thanks to the following display postulates.

\begin{center}
{\scriptsize{
\begin{tabular}{rl}
\mc{2}{c}{\normalsize{\textbf{Display Postulates}}} \\

 & \\

\AX$\{\alpha\} X \fCenter Y$
\LeftLabel{$(\{\alpha\}, \RESalphaProxy)$}
\doubleLine
\UI$X \fCenter \RESalphaProxy Y$
\DisplayProof

 &

\AX$Y \fCenter \{\alpha\} X$
\RightLabel{$(\RESalphaProxy, \{\alpha\})$}
\doubleLine
\UI$\RESalphaProxy Y \fCenter X$
\DisplayProof \\

 & \\
\end{tabular}
}}
\end{center}
\smallskip

The display postulates above are  direct translations of the adjunction relations \eqref{eqnarray : adjunction D'EAK}.
Next, we report on the structural rules which are to capture the specific behaviour of epistemic actions: \label{pageref:atom}
\smallskip
\begin{center}
{\scriptsize{
\begin{tabular}{rl}

\mc{2}{c}{\normalsize{\textbf{Atom}}} \\

& \\

\mc{2}{c}{
\AXC{}
\RightLabel{{$atom$}}
\LeftLabel{{$\phantom{atom}$}}
\UI$\Gamma p \fCenter \Delta p$
%
\DisplayProof
} \\



 & \\
\end{tabular}
}}
\end{center}
\smallskip

\noindent where $\Gamma$ and $\Delta$ are arbitrary finite sequences of the form $(\alpha_1)\ldots (\alpha_n)$ (possibly of different length), such that each  $(\alpha_j)$ is of the form $\{\alpha_j\}$ or of the form $\RESalphajProxy$, for $1\leq j\leq n$. Intuitively, the {\em atom rules} capture the requirement that epistemic actions do not change the factual state of affairs (in the Hilbert-style presentation of EAK, this is encoded in the axiom \eqref{eq:facts} in Section \ref{sec:EAK}).

\smallskip
\begin{center}
{\scriptsize{
\begin{tabular}{@{}rl@{}}
\mc{2}{c}{\normalsize{\textbf{Structural Rules for Epistemic Actions}}} \\


 & \\

\mc{2}{c}{\ \ \ \
\AX$X \fCenter Y$
\RightLabel{\emph{balance}}
\UI$\{\alpha\} X\fCenter \{\alpha\} Y$
\DisplayProof}\\

 & \\

\AX$\{\alpha\} \RESalphaProxy X  \fCenter  Y$
\LeftLabel{$comp^{\alpha}_L$}
\UI$ \Phi_\alpha; X \fCenter  Y$
\DisplayProof
&
\AX$X \fCenter  \{\alpha\} \RESalphaProxy Y$
\RightLabel{$comp^{\alpha}_R$}
\UI$ X \fCenter \Phi_\alpha >Y$
\DisplayProof
\\

 & \\

\AX$ \Phi_\alpha;  \{\alpha\} X \fCenter  Y$
\LeftLabel{\emph{reduce}$_{L}$}
\UI$ \{\alpha\} X \fCenter  Y$
\DisplayProof
&
\AX$ Y \fCenter \Phi_\alpha > \{ \alpha \} X$
\RightLabel{\emph{reduce}$_{R}$}
\UI$ Y \fCenter \{ \alpha \} X$
\DisplayProof \\

& \\
\AX$ \{\alpha\} \{\aga\} X \fCenter Y$
\LeftLabel{\emph{swap-in}$_{L}$}
\UI$\Phi_\alpha; {\{\aga\}\{\beta\}_{\alpha\aga\beta}\, X} \fCenter Y$
\DisplayProof
&
\AX$Y \fCenter \{\alpha\} \{\aga\} X$
\RightLabel{\emph{swap-in}$_{R}$}
\UI$Y \fCenter \Phi_\alpha > {\{\aga\} \{\beta\}_{\alpha\aga\beta}\,X}$
\DisplayProof\\

& \\

\AX$\Big ( \{\aga\}\{\beta\} \,X \fCenter Y\mid \alpha\aga\beta\Big)$
\LeftLabel{\emph{swap-out}$_{L}$}
\UI$ \{\alpha\} \{\aga\} X \fCenter \Bigsemic\Big(Y\mid \alpha\aga\beta\Big)$
\DisplayProof
&
\AX$\Big(Y \fCenter \{\aga\}\{\beta\}\,X \mid \alpha\aga\beta\Big)$
\RightLabel{\emph{swap-out}$_{R}$}
\UI$\Bigsemic\Big(Y\mid \alpha\aga\beta\Big) \fCenter  \{\alpha\} \{\aga\} X$
\DisplayProof\\

&  \\

\end{tabular}
}}
\end{center}
\smallskip

\noindent The {\em swap-in} rules are unary and should be read as follows: if the premise holds, then the conclusion holds relative to any action $\beta$ such that $\alpha\aga\beta$.
The {\em swap-out} rules do not have a fixed arity; they have as many premises\footnote{The {\em swap-out} rule could indeed be infinitary if action structures were allowed to be infinite, which in the present setting, as in \cite{BMS}, is not the case.} as there are actions $\beta$ such that $\alpha\aga\beta$.
In their conclusion, the symbol $\Bigsemic \Big(Y \mid \alpha\aga\beta \Big)$ refers to a string $(\cdots(Y\,; Y) \,;\cdots \,; Y)$ with $n$ occurrences of $Y$, where $n = |\{\beta\mid \alpha\aga\beta\}|$. The {\em swap-in} and {\em swap-out} rules  encode the interaction between dynamic and epistemic modalities as it is captured by the interaction axioms in the Hilbert style presentation of EAK (cf.\ \eqref{eq:interact axiom} in Section \ref{sec:EAK} and similarly in Section \ref{ssec:IEAK}). The {\em reduce} rules encode well-known EAK validities such as $\langle\alpha\rangle A \rightarrow (Pre(\alpha)\wedge \langle\alpha\rangle A )$.


\noindent  Finally, the  operational rules for $\lc\alpha\rc$, $\ls\alpha\rs$, and $1_\alpha$ are reported  below:

{\scriptsize{
\begin{center}
\begin{tabular}{rl}
\mc{2}{c}{\normalsize{\textbf{Operational Rules}}} \\
 & \\
\AX$\{\alpha\} A \fCenter X$
\LeftLabel{\fns$\lc\alpha\rc_L$}
\UI$\lc\alpha\rc A \fCenter X$
\DisplayProof &
\AX$X \fCenter A$
\RightLabel{\fns$\lc\alpha\rc_R$}
\UI$\{\alpha\} X \fCenter \lc\alpha\rc A$
\DisplayProof \\
 & \\
\AX$A \fCenter X$
\LeftLabel{\fns$\ls\alpha\rs_L$}
\UI$\ls\alpha\rs A \fCenter \{\alpha\} X$
\DisplayProof &
\AX$X \fCenter \{\alpha\} A$
\RightLabel{\fns$\ls\alpha\rs_R$}
\UI$X \fCenter \ls\alpha\rs A$
\DisplayProof \\
 & \\
\AX$\Phi_\alpha  \fCenter X$
\LeftLabel{\scriptsize{${1_\alpha}_L$}}
\UI$ 1_\alpha \fCenter X$
\DisplayProof
 &
\AxiomC{\phantom{$\Phi_\alpha \vdash$}}
\RightLabel{\fns{${1_\alpha}_R$}}
\UnaryInfC{$\Phi_\alpha \vdash 1_\alpha $}
\DisplayProof \\
 & \\

\end{tabular}
\end{center}
}
}

\section{Multi-type calculi, and cut elimination metatheorem}
\label{sec:multi}

The present section is aimed at introducing the environment of multi-type display calculi. Our treatment  will be very general, and in particular, no signature will be specified. However, the calculus introduced in Section \ref{DC} is a concrete instantiation of this abstract description.

\subsection{Multi-type calculi}
\label{sec:multi}

Our starting point is a propositional language, the terms of which form $n$ pairwise disjoint types $\mathsf{T_1}\ldots \mathsf{T_n}$, each of which with its own signature.
We will use $a, b, c$ and $x, y, z$ to respectively denote operational and structural  terms of  unspecified (possibly different) type. Further, we assume that operational connectives and structural connectives are given {\em both} within each type {\em and} also between different types, so that the display property holds. 

%
%
In the applications we have in mind, the need will arise to support types that are semantically ordered by inclusion.
For example, in Section~\ref{DC} we will introduce, beside the type $\mathsf{Fm}$ of formulas, two types $\mathsf{Fnc}$ and $\mathsf{Act}$ of functional and general actions, respectively. The need for enforcing the distinction between functional and general actions   in the specific situation of  Section~\ref{DC} arises because of the presence of  the rule {\em balance} (see page \pageref{page:two types} for more details on this topic). The semantic point of view suggests to treat $\mathsf{Fnc}$ as a proper subset of $\mathsf{Act}$, but our syntactic stipulations, although will be sound w.r.t.\ this state of affairs, will be tuned for the more general situation in which the sets $\mathsf{Fnc}$ and $\mathsf{Act}$  are disjoint.
This is convenient as each term can be assigned a unique type {\em unambiguously}. This is a crucial requirement for the Belnap-style cut elimination theorem of the next section, and will be explicitly stated in condition C'$_2$ below.

\begin{definition}
\label{def:type-uniformity}
A sequent $x\vdash y$ is {\em type-uniform} if $x$ and $y$ are of the same type $\mathsf{T}$. In this case, we will say that $x\vdash y$ is of type $\mathsf{T}$.
\end{definition}
A fundamental and very natural  desideratum for rules in a multi-type display calculus is that they preserve type-uniformity, that is, each rule should be such that if all the premises are type uniform, then the conclusion is type uniform. As we will see, all rules in the multi-type calculus introduced in Section \ref{DC} preserve type uniformity.

Finally, in a display calculus, the cut rule is typically of the following form:
\begin{center}
\AX$X \fCenter A$
\AX$A \fCenter Y$
\RightLabel{$Cut$}
\BI$X \fCenter Y$
\DisplayProof
\end{center}
where $X, Y$ are structures and $A$ is a formula.
This translates straightforwardly to the multi-type environment, by the stipulation that  cut rules of the form
\begin{center}
\AX$x \fCenter a$
\AX$a \fCenter y$
\RightLabel{$Cut$}
\BI$x \fCenter y$
\DisplayProof
\end{center}
are allowed in the given multi-type system for each type. These cut rules will be asked to satisfy the following additional requirement:

\begin{definition}
\label{def:strong-type-uniformity}
A  rule is {\em strongly type-uniform} if its premises and conclusion are of the same type.
\end{definition}

\subsection{Quasi-properly displayable multi-type calculi}
\label{sec:quasi}
    In \cite{GAV}, to show that  Belnap-style cut elimination holds for the display calculus D'.EAK, the definition of quasi-properly displayable calculi is given (generalizing  Wansing's definition of properly displayable calculi \cite[Section 4.2]{Wa98}), and its corresponding Belnap style meta-theorem is discussed.  
We are working towards  the proof that the multi-type display calculus introduced in Section \ref{DC} enjoys cut elimination Belnap-style. The aim of the present subsection is then to extend the notion of quasi-properly displayable calculi to the multi-type environment.
\label{ssec:quasi-def}
 Let a {\em quasi-properly displayable multi-type calculus} be any display
calculus in a multi-type language satisfying the following list of conditions\footnote{See \cite{GAV} for a discussion on C'$_5$ and C''$_5$.}:

\paragraph{C$_1$: Preservation of operational terms.} Each operational term occurring in a premise of an inference rule {\em inf} is a subterm of some operational term in the conclusion of {\em inf}.
\paragraph{C$_2$: Shape-alikeness of parameters.} Congruent parameters\footnote{The congruence relation is an equivalence relation which is meant to identify the different occurrences of the same formula or substructure along the branches of a derivation \cite[section 4]{Belnap}, \cite[Definition 6.5]{Restall}. Condition C$_2$  can be understood as a condition on the {\em design} of the rules of the system if the congruence relation is understood as part of the specification of each given rule; that is, each rule of the system should come with an explicit specification of which elements are congruent to which (and then the congruence relation is defined as the reflexive and transitive closure of the resulting relation). In this respect, C$_2$ is nothing but a sanity check, requiring that the congruence is defined in such a way that indeed identifies the occurrences which are intuitively ``the same''.} are occurrences of the same structure.
\paragraph{C'$_2$: Type-alikeness of parameters.}  Congruent parameters have exactly the same type. This condition bans the possibility that a parameter changes type along its history.
\paragraph{C'$_3$: Restricted non-proliferation of parameters.} Each parameter in an inference rule {\em inf} is congruent to at most one constituent in the conclusion of {\em inf}. This restriction does not need to apply to parameters of any type $\mathsf{T}$ such that the only applications of cut with cut terms of type $\mathsf{T}$ are of the following shapes:

\begin{center}
\begin{tabular}{cc}
\AXC{$\vdots$}
\noLine
\UI$X\fCenter a$
\AX$a\fCenter a$
\BI$X\fCenter a$
\DisplayProof
&
\AX$a\fCenter a$
\AXC{$\vdots$}
\noLine
\UI$a\fCenter Y$
\BI$a\fCenter Y$
\DisplayProof
\\
\end{tabular}
\end{center}

\paragraph{C$_4$: Position-alikeness of parameters.} Congruent parameters are either all antecedent or all succedent parts of their respective sequents.
\paragraph{C'$_5$: Quasi-display of principal constituents.} If an operational term $a$ is principal in the conclusion sequent $s$ of a derivation $\pi$, then $a$ is in display, unless $\pi$ consists only of its conclusion sequent $s$ (i.e.\ $s$ is an axiom).
\paragraph{C''$_5$: Display-invariance of axioms.} If a display rule can be applied to an axiom $s$, the result of that rule application is again an axiom.
\paragraph{C'$_6$: Closure under substitution for succedent parts within each type.} Each rule is closed under simultaneous substitution of arbitrary structures for congruent operational terms occurring in succedent position, {\em within each type}.
\paragraph{C'$_7$: Closure under substitution for precedent parts within each type.} Each rule is closed under simultaneous substitution of arbitrary structures for congruent operational terms occurring in precedent position, {\em within each type}.

Condition C$_6$ (and  likewise C'$_7$) ensures, for instance, that if the following inference is an application of the rule $R$:

\begin{center}
\AX$(x \fCenter y) \big([a]^{suc}_{i} \,|\, i \in I\big)$
\RightLabel{$R$}
\UI$(x' \fCenter y') [a]^{suc}$
\DisplayProof
\end{center}

\noindent and $\big([a]^{suc}_{i} \,|\, i \in I\big)$ represents all and only  the occurrences of the operational term $a$ in the premiss which are congruent to the occurrence of $a$  in the conclusion\footnote{Clearly, if $I = \varnothing$, then the occurrence of $a$ in the conclusion is congruent to itself.}, 
then also the following inference is an application of the same rule $R$:

\begin{center}
\AX$(x \fCenter y) \big([z/a]^{suc}_{i} \,|\, i \in I\big)$
\RightLabel{$R$}
\UI$(x' \fCenter y') [z/a]^{suc}$
\DisplayProof
\end{center}

\noindent where the structure $z$ is substituted for $a$, and $z$ and $a$ have the same type.

\paragraph{C'$_8$: Eliminability of matching principal constituents.}

This condition  requests a standard Gentzen-style checking, which is now limited to the case in which  both cut formulas  are {\em principal}, and hence each of them has been introduced with the last rule application of each corresponding subdeduction. In this case, analogously to the proof  Gentzen-style, condition C'$_8$ requires being able to transform the given deduction into a deduction with the same conclusion in which either the cut is eliminated altogether, or is transformed in one or more applications of the cut rule, involving proper subterms of the original operational cut-term. In addition to this, specific to the multi-type setting is the requirement that the new application(s) of the cut rule be also {\em strongly type-uniform} (cf.\ condition C$_{10}$ below).

\paragraph{C''$_8$: Closure of axioms under cut.} If $x\vdash a$ and $a\vdash y$ are axioms, then $x\vdash y$ is again an axiom.

\paragraph{C$_9$: Type-uniformity of derivable sequents.} Each derivable sequent is type-uniform. 

\paragraph{C$_{10}$: Strong type-uniformity of cut rules.} All cut rules are strongly type-uniform (cf.\ Definition \ref{def:strong-type-uniformity}).
\bigskip
\commment{
\noindent Let us now return on conditions $C_6$ and $C_7$. In \cite[subsubsection 4.4]{Wa98}, Wansing reports that, in order to extend the Belnap-style cut elimination to e.g.~linear logic,  conditions $C_6$ and $C_7$ as given above need to be replaced by the following more general condition:

\paragraph{C$_6$/C$_7$: Regularity of parametric formulas.} This condition requires that in each rule, each parametric occurrence of a formula is {\em regular}. A parametric formula occurrence is {\em regular} if it is either cons-regular or ant-regular. A parametric formula occurrence is {\em cons-regular} if the following holds: (i) if $A$ occurs  in {\em succedent} position, then  the analogous condition as in $C_6$ above should hold, as represented in the following diagram:
\[
\AxiomC{$ (X \vdash Y) [A]^{succedent}$}
\RightLabel{$R$}
\UnaryInfC{$ (X' \vdash Y') [A]^{succedent}$}
\DisplayProof
\qquad
\overset{\rightsquigarrow}
\qquad
\AxiomC{$ (X \vdash Y) [Z]^{succedent}$}
\RightLabel{$R.$}
\UnaryInfC{$ (X' \vdash Y') [Z]^{succedent}$}
\DisplayProof
\]
\noindent (ii) if $A$ occurs  in {\em precedent} position, then  the analogous condition as in $C_6$ above should hold, restricted to structures $Z$ such that a derivation of the sequent $Z\vdash A$ exists, in which  $A$ is  principal in the conclusion sequent.
The definition of {\em ant-regular} parametric formula occurrence is given dually.

\bigskip Like the previous $C_6$ and $C_7$, also  condition $C_6/C_7$ caters for cases in which the cut needs to be pushed up over rules in which at least one of the cut-formulas is a parameter. In order to understand this more general condition, consider  the operations {\em why not} (denoted by ?) and {\em of course} (denoted by !)  in linear logic. As is well known, in linear logic, contraction is not allowed in general, but only in the following  restricted form:

\[
\AX$ X \fCenter \wn A\ ;\ \wn A$
\LeftLabel{$C_\wn$}
\UI$ X \fCenter \wn A$
\DisplayProof
\qquad
\AX$ \oc A\ ;\ \oc A\fCenter X$
\RightLabel{$C_\oc$}
\UI$ \oc A \fCenter X$
\DisplayProof
\]
\noindent Clearly, $C_\wn$ does not satisfy $C_6$, and  $C_\oc$ does not satisfy $C_7$; however, each occurrence of $\wn A$ is ant-regular, and each occurrence of $\oc A$ is cons-regular, hence the rules above satisfy $C_6/C_7$.

In a situation like the one below on the left-hand side, the following transformation is not viable because the application of $C_\wn$ is blocked for arbitrary structures:

\begin{center}
\footnotesize{
\bottomAlignProof
\begin{tabular}{lcr}
\AXC{\ \ \ $\vdots$ \raisebox{1mm}{$\pi_1$}}
\noLine
\UI$X \fCenter \wn A\ ;\ \wn A$
\UI$X\ \fCenter\ \wn A$
\AXC{\ \ \ $\vdots$ \raisebox{1mm}{$\pi_2$}}
\noLine
\UI$\wn A\ \fCenter\ Y$
\BI$X \fCenter\ Y$
\DisplayProof
 & $\not\rightsquigarrow$ &
\bottomAlignProof
\AXC{\ \ \ $\vdots$ \raisebox{1mm}{$\pi_1$}}
\noLine
\UI$X\ \fCenter\ \wn A\ ;\ \wn A$
\AXC{\ \ \ $\vdots$ \raisebox{1mm}{$\pi_2$}}
\noLine
\UI$\wn A\ \fCenter\ Y$
\dashedLine
\BI$X \fCenter Y \ ;\ Y$
\RightLabel{Blocked}
\UI$X \fCenter Y$

\DisplayProof
 \\
\end{tabular}
}
\end{center}
\noindent However, a more sophisticated reduction strategy is possible, which consists in tracking where the succedent occurrence of $\wn A$ has been introduced in the subderivation $\pi_2$. The crucial observation is that, thanks to the operational rules introducing $\wn$,  whenever $\wn A$ is principal and in precedent position, the shape of the sequent in which it occurs is $\wn A\vdash \wn Z$, and  for a structure of the shape $\wn Z$, the application of the rule $C_\wn$ is allowed. Let us then rewrite the original derivation below on the left-hand side:

\begin{center}
\footnotesize{
\bottomAlignProof
\begin{tabular}{lcr}
\AXC{\ \ \ $\vdots$ \raisebox{1mm}{$\pi_1$}}
\noLine
\UI$X \fCenter \wn A\ ;\ \wn A$
\UI$X\ \fCenter\ \wn A$
\AXC{\ \ \ $\vdots$ \raisebox{1mm}{$\pi''_2$}}
\noLine
\UI$\wn A\ \fCenter\ \wn Z$
\noLine
\UIC{\ \ \ $\vdots$ \raisebox{1mm}{$\pi'_2$}}
\noLine
\UI$\wn A\ \fCenter\ Y$
\BI$X \fCenter\ Y$
\DisplayProof

& $\rightsquigarrow$ &

\bottomAlignProof
\AXC{\ \ \ $\vdots$ \raisebox{1mm}{$\pi_1$}}
\noLine
\UI$X\ \fCenter\ \wn A\ ;\ \wn A$
\AXC{\ \ \ $\vdots$ \raisebox{1mm}{$\pi''_2$}}
\noLine
\UI$\wn A\ \fCenter\ \wn Z$
\dashedLine
\BI$X \fCenter \wn Z \ ;\ \wn Z$
\RightLabel{Allowed}
\UI$X \fCenter \wn Z$
\noLine
\UIC{\ \ \ \ \ \ \ \ \ \ \ $\vdots$ \raisebox{1mm}{$\pi'_2[X/\wn A]$}}
\noLine
\UI$X \fCenter\ Y$
\DisplayProof
 \\
\end{tabular}
}
\end{center}
\noindent A crucial fact for the transformation above to go through is that the rule $C_\wn$ is closed under the substitution of $\wn A$ for a structure $\wn Z$ such that the derivation $\pi''_2$ with conclusion $\wn A\vdash \wn Z$---introducing $\wn A$ as principal formula in its conclusion---exists.
}


\subsection{Belnap-style metatheorem for multi-types}
In the present subsection, we state and prove the Belnap-style metatheorem which we will appeal to when establishing  the cut elimination Belnap-style  for the calculus we will introduce in the next section.

\begin{theorem}
\label{thm:meta multi}
Any multi-type display calculus satisfying C$_2$, C'$_2$, C'$_3$, C$_4$, C'$_5$, C''$_5$, C'$_6$, C'$_7$, C'$_8$, C''$_8$, C$_9$ and C$_{10}$ is cut-admissible. If also C$_1$ is satisfied, then the calculus enjoys the subformula property.
\end{theorem}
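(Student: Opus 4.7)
The plan is to adapt Belnap's original strategy for cut-elimination to the multi-type setting, by double induction on the pair (complexity of the cut-formula, sum of the heights of the derivations of the two premises of the topmost remaining cut). The essential new ingredient is that conditions C$_9$ and C$_{10}$ guarantee that every cut one considers is strongly type-uniform: its two premises and its conclusion all live in a single type $\mathsf{T}$. This is what licenses the use, in the parametric moves, of the within-type substitution conditions C'$_6$ and C'$_7$, and of the type-alikeness condition C'$_2$ which keeps the history of each parametric occurrence inside a fixed type $\mathsf{T}$.

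First I would handle the case where the cut-formula $a$ is parametric in at least one of the two premises. Following Belnap, one traces the history of $a$ back to its points of introduction and permutes the cut upward past the rule in which it appears as a parameter, reducing the secondary measure. Conditions C$_2$ and C$_4$ ensure that congruent parameters have the same shape and occur in matching (precedent or succedent) positions, so the permuted cut is still well-formed; C'$_6$/C'$_7$ ensure that the rule at which the cut is being permuted is closed under the substitution of the appropriate structure for $a$, and C'$_2$ guarantees that this substitution is type-correct. The restricted non-proliferation condition C'$_3$ ensures that a single cut suffices for the permutation, except when the cut has type $\mathsf{T}$ for which the exemption of C'$_3$ applies; but in that case one of the two premises is forced to be an axiom $a \vdash a$ and the cut either collapses to the other premise or reduces via C''$_8$ to an axiom.

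Next I would treat the principal case, where $a$ is congruent to the principal constituent introduced by the last rule on both sides. Condition C'$_5$ places each such $a$ in display, unless one of the subderivations consists of a single axiom; in the latter situation C''$_5$ and C''$_8$ together replace the cut by an axiom of the desired shape. With both cut-formulas in display, C'$_8$ supplies the Gentzen-style reduction, rewriting the original cut either into nothing or into a finite family of cuts on proper subterms of $a$; each such new cut is, by the explicit clause in C'$_8$ together with C$_{10}$, strongly type-uniform, and its cut-formula has strictly smaller complexity, so the primary induction hypothesis applies.

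The main obstacle will be, in my estimation, keeping the bookkeeping straight around the exception clause in C'$_3$: one must show that along the entire reduction strategy the only cuts that are ever created on a type $\mathsf{T}$ allowing parameter-proliferation are themselves of the trivial form listed in C'$_3$, since otherwise the parametric move cannot be invoked without duplicating cuts in an uncontrolled way. Condition C'$_3$ is phrased precisely to preserve this invariant, but verifying it requires inspecting the interaction between the quasi-display clause of C'$_5$ (with its axiom exception) and the parametric permutation step. Once cut-admissibility is established, the subformula property under the additional assumption C$_1$ follows by a routine induction on cut-free derivations: every operational term that appears in any sequent of such a derivation is, by C$_1$, a subterm of some operational term in the end-sequent.
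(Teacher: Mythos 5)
Your overall architecture coincides with the paper's: double induction on the pair (complexity of the cut term, a height measure), a principal stage discharged via C'$_8$ and C''$_8$ with C$_{10}$ guaranteeing strong type-uniformity of the new cuts, a parametric stage exploiting C$_2$, C'$_2$, C$_4$, C'$_6$/C'$_7$ and the C'$_3$ exemption, and the subformula property from C$_1$ at the end. However, your description of the parametric stage contains a genuine gap. In a display calculus one cannot literally ``permute the cut upward past the rule in which it appears as a parameter'': the cut rule requires the cut term to constitute the entire antecedent or succedent of its premises, and after a single permutation step the parametric occurrence is in general buried inside a structure, so no well-formed cut exists at the intermediate sequent. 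The paper's move (Belnap's) is global rather than local: trace the whole history tree of the parametric cut term up to its leaves, substitute the structure $x$ from the other premise for \emph{every} occurrence of $a$ along that tree in one pass (this is precisely what C'$_6$/C'$_7$ license, with C'$_2$ and C$_9$ ensuring the substitution is type-correct), and insert new cuts only at the leaves where $a$ is introduced as principal --- invoking C'$_5$ and C''$_5$ at leaves where $a$ is principal but not displayed (necessarily axioms), and no cut at all at leaves introduced parametrically by Weakening, Atom or Balance. Your phrase ``traces the history of $a$ back to its points of introduction'' suggests you intend this, but the step-by-step permutation you actually describe is not available, and your placement of C'$_5$/C''$_5$ in the principal stage rather than at these leaves obscures their real role.

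More seriously for well-foundedness, the claim that the parametric move reduces the secondary measure is false in general. The paper explicitly flags the counterexample: two ancestors of a cut term of type $\mathsf{Fm}$ may be introduced as principal along the same branch and then identified by Contraction, in which case a newly created cut need not have lower height. The induction is rescued not by the height measure but by the observation that every cut created by the global substitution has both cut terms principal, so the Principal stage applies to it immediately and C'$_8$ replaces it by cuts on proper subterms, decreasing the \emph{primary} measure. As stated, your induction does not account for this and would fail to terminate on such configurations. Your worry about the C'$_3$ exemption, by contrast, is easily dispatched: for exempted types the only admissible cuts have an identity axiom $a\vdash a$ as one premise, so they collapse to the other premise outright and never interact with the parametric machinery.
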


\begin{proof}
This is a generalization of the proof in \cite[Section 3.3, Appendix A]{Wan02}. For the sake of conciseness, we will expand only on the parts of the proof which depart from that treatment. As usual, the proof is done by induction on the ordered pair of parameters given by the complexity of the cut term and the height of the cut.
%
Our original derivation is
\begin{center}
\AXC{\ \ \ $\vdots$ \raisebox{1mm}{$\pi_1$}}
\noLine
\UI$x\fCenter a$
\AXC{\ \ \ $\vdots$ \raisebox{1mm}{$\pi_2$}}
\noLine
\UI$a \fCenter y$
\BI$x \fCenter y$
\DisplayProof
\end{center}

\newpage

\paragraph*{Principal stage: both cut formulas are principal.}\label{principal stage}
\noindent There are three subcases.

If the end sequent $x \fCenter y$ is identical to the conclusion of $\pi_1$ (resp.\ $\pi_2$), then we can eliminate the cut simply replacing the derivation above with $\pi_1$ (resp.\ $\pi_2$).

If the premises $x \vdash a$ and $a\vdash y$ are axioms, then, by C''$_8$, the conclusion $x\vdash y$ is an axiom, therefore the cut can be eliminated by simply replacing the original derivation with $x \fCenter y$.

If one of the two premises of the cut in the original derivation is not an axiom, then, by C'$_8$, there is a proof of $x \fCenter y$ which uses the same premise(s) of the original derivation and which involves only strongly uniform cuts on proper subterms of $a$.

\paragraph*{Parametric stage: at least one cut term is parametric.}
There are two subcases: either one cut term is principal or they are both parametric.

Consider the subcase in which one cut term is principal. W.l.o.g.\ we assume that the cut-term $a$ is principal in the  left-premise $x \vdash a$ of the cut in the original proof (the other case is symmetric). We can assume w.l.o.g.\ that the conclusion of the cut is different from either of its premises. Then, conditions C$_2$ and C'$_3$ make it possible to trace the history-tree of the occurrences of the cut-term $a$ in $\pi_2$ (cf.\ \cite[Remark 1]{GAV}), and by conditions C'$_2$ and $C_4$, any ancestor of $a$ is of the same type and in the same position (that is, is in precedent position).
 The situation can be pictured as follows:
\begin{center}
\AXC{\ \ \ $\vdots$ \raisebox{1mm}{$\pi_1$}}
\noLine
\def\fCenter{\vdash}
\UI$ x \fCenter a$
\AXC{\ \ \ \ \ $\vdots$ \raisebox{1mm}{$\pi_{2.i}$}}
\noLine
\UI$\underline{a}_i \fCenter y_i$
\noLine
\UIC{$\ \ \ddots$}
\noLine
\AXC{\ \ \ \ \ $\vdots$ \raisebox{1mm}{$\pi_{2.j}$}}
\noLine
\UI$(x_j \fCenter y_j)[\underline{a}_j]^{pre}$
\noLine
\UIC{\!\!$\vdots$}
\noLine
\AXC{\ \ \ \ \ $\vdots$ \raisebox{1mm}{$\pi_{2.k}$}}
\noLine
\UI$(x_k \fCenter y_k)[\overline{a}_k]^{pre}$
\noLine
\UIC{$\!\!\!\!\!\!\!\!\!\!\!\!\!\!\!\!\!\!\!\!\!\!\!\!\!\!\!\!\!\!\!\!\!\!\!\!\!\!\!\!\!\!\!\!\iddots$}
\noLine
\TIC{$\ \ \ \ \ \ \ \ \rule[-5.2mm]{0mm}{0mm}\ddots\vdots\iddots\rule{0mm}{10mm}$ \raisebox{1mm}{$\pi_2$}}
\noLine
\def\fCenter{\vdash}
\UIC{$\ \ \ a \fCenter y$}
\BI$ x \fCenter y$
\DisplayProof
\end{center}
\smallskip
where, for $i, j, k \in \{1, \ldots, n\}$, the nodes \[\underline{a}_i \fCenter y_i, \quad (x_j\vdash y_j)[a_j]^{pre}, \ \mbox { and }\  (x_k \fCenter y_k)[\overline{a}_k]^{pre}\] represent the three ways in which the leaves $a_i$, $a_j$ and $a_k$ in the history-tree of $a$ in $\pi_2$ can be introduced, and which will be discussed below. The notation $\underline{a}$  (resp.\ $\overline{a}$) indicates that the given occurrence is principal (resp.\ parametric). Notice that condition C$_4$ guarantees that all occurrences in the history of $a$ are in precedent position in the underlying derivation tree, and condition C'$_2$ guarantees that the type of $a$ never changes along its history.
Let $a_l$ be introduced as a parameter (as represented in the picture above in the conclusion of $\pi_{2.k}$ for $a_l = a_k$).
Assume that $(x_k \vdash y_k)[\overline{a}_k]^{pre}$ is the conclusion of an application \emph{inf} of the rule \emph{Ru} (for instance, in the calculus of Section \ref{DC}, this situation arises if $a_k$ is of type \textsf{Fm} and has been introduced with an application of Weakening, or if $a_k$ is of type \text{Fnc} and has been introduced with an application of Atom, or Balance).
Since $a_k$ is a leaf in the history-tree of $a$, we have that $a_k$ is congruent only to itself in $x_k \vdash y_k$.
Notice that the assumption that every derivable sequent is type-uniform (C$_9$), and the type-alikeness of parameters (C'$_2$) imply that the sequent $a_1$, $a_k$ and $x$ have the same type. Hence, C'$_7$ implies that it is possible to
substitute $x$ for $a_k$ by means of an application of the same rule \emph{Ru}. That is,
$(x_k \vdash y_k)[\overline{a}_k]$ can be replaced by $(x_k \vdash y_k)[\overline{x}/\overline{a}_k]$.

Let $a_l$ be introduced as a principal formula. The corresponding subcase in \cite{Wan02} splits into two subsubcases: either $a_l$ is introduced in display or it is not.

If $a_l $ is in display (as represented in the picture above in the conclusion of $\pi_{2.i}$ for $a_l = a_i$), then we form a subderivation using $\pi_1$  and $\pi_{2.i}$ and applying cut as the last rule. The assumptions that the original cut is strongly type-uniform (C$_{10}$), that every derivable sequent is type-uniform (C$_9$), and the type-alikeness of parameters (C'$_2$) imply that the sequent $\underline{a_i} \vdash y_i$ is of the same type as the sequents $x \vdash a$ and $a \vdash y$. Hence, the new cut is strongly type-uniform.

If $a_l $ is not in display (as represented in the picture above in the conclusion of $\pi_{2.j}$ for $a_l = a_j$), then condition C'$_5$ implies that $(x_j \vdash y_j)[\underline{a}_j]^{pre}$ is an axiom, and C''$_5$ implies that some axiom $\underline{a}_j \vdash y'_j$ exists, which is display-equivalent to the first axiom, and in which $a_j$ occurs in display.  Let $\pi'$ be the derivation which transforms $\underline{a}_j \vdash y'_i$ into $(x_j \vdash y_j)[\underline{a}_j]^{pre}$. We form a subderivation using $\pi_1$ and $\underline{a}_j \vdash y'_j$ and joining them with a cut application, then attaching $\pi'[x/a_j]^{pre}$ below the new cut.

The transformations just discussed explain how to transform the leaves of the history tree of $a$. Finally, since, as discussed above, $x$ has the same type of  $a$, condition C'$_7$ implies that substituting $x$ for each occurrence of $a$ in the history tree of the cut term $a$ in $\pi_2$ (and in each occurring $\pi'$ as above) gives rise to an admissible derivation $\pi_2[x/a]^{pre}$ (use C'$_6$ for the symmetric case).

\commment{
We illustrate this special situation in the diagrammatic proof below:

\begin{center}
\begin{tabular}{lcr}
\bottomAlignProof
\AXC{\ \ \ $\vdots$ \raisebox{1mm}{$\pi_1$}}
\noLine
\UI$X \fCenter A$
%
\AXC{$(X_i \fCenter Y_i) [\underline{A}_i]^{pre} [\underline{A}]^{suc}$}
\noLine
\UIC{\ \ \ $\vdots$ \raisebox{1mm}{$\pi_2$}}
\noLine
\UI$A\fCenter Y$
\BI$X \fCenter Y$
\DisplayProof

 & $\rightsquigarrow$ &

\bottomAlignProof
\AXC{\ \ \ $\vdots$ \raisebox{1mm}{$\pi_1$}}
\noLine
\UI$X \fCenter A$
%
\AXC{$\underline{A}_i \fCenter Y' [\underline{A}]^{suc}$}
\BI$X \fCenter Y' [A]^{suc}$
\noLine
\UIC{\ \ \ $\vdots$ \raisebox{1mm}{$\pi'$}}
\noLine
\UI$(X_i \fCenter Y_i) [X/A_i]^{pre} [A]^{suc}$
\noLine
\UIC{\ \ \ \ \ \ \ \ \ \ \ \ \ \ \ \ \ $\vdots$ \raisebox{1mm}{$\pi_2 [X/A]^{pre}$}}
\noLine
\UI$X \fCenter Y$
\DisplayProof
 \\
\end{tabular}
\end{center}
}
Summing up, this procedure generates the following proof tree:
\begin{center}
\AXC{\ \ \ $\vdots$ \raisebox{1mm}{$\pi_1$}}
\noLine
\def\fCenter{\vdash}
\UI$x \fCenter a$
\AXC{\ \ \ \ \ $\vdots$ \raisebox{1mm}{$\pi_{2.i}$}}
\noLine
\UI$\underline{a}_i \fCenter y_i$
\BI $ x \fCenter y_i$
\noLine
\UIC{\ \  $\ddots$}
\AXC{\ \ \ $\vdots$ \raisebox{1mm}{$\pi_1$}}
\noLine
\UI$x \fCenter a$
%
\AXC{$\underline{a}_j \fCenter y' $}
\BI$x \fCenter y' [a]^{suc}$
\noLine
\UIC{\ \ \ \ \ \ \ \ \ \ \ \ \ \ $\vdots$ \raisebox{1mm}{$\pi'[x/a]^{pre}$}}
\noLine
\UI$(x_j \fCenter y_j) [x/a_j]^{pre} $
\noLine
\UIC{\!\!$\vdots\ $}
\AXC{\ \ \ \ \ $\vdots$ \raisebox{1mm}{$\pi_{2.k}$}}
\noLine
\UI$(x_k \fCenter y_k)[\overline{x}/\overline{a}_k]^{pre}$
\noLine
\UIC{$\!\!\!\!\!\!\!\!\!\!\!\!\!\!\!\!\!\!\!\! \iddots$}
\noLine
\TIC{$\ \ \ \ \ \ \ \ \ \ \ \ \ \ \ \ \ \ \ \ \ \ \ \ \ \ \,\rule[-5.2mm]{0mm}{0mm}\ddots\vdots\iddots\rule{0mm}{10mm}$ \raisebox{1mm}{$\pi_2[x/a]^{pre}$}}
\noLine
\def\fCenter{\vdash}
\UIC{\ \ \ \ \ \ \ \ \ \ $x \fCenter y$}
\DisplayProof
\end{center}
We observe that in each newly introduced application of the cut rule, both cut terms are principal. Hence, we can apply the procedure described in the Principal stage and transform the original derivation in a derivation in which the cut terms of the newly introduced cuts have strictly lower complexity than the original cut terms. When the newly introduced applications of cut are of lower height than the original one, we do not need to resort to the Principal stage.\footnote{ This is for instance the case if, in the original derivation, the history-tree of the cut term $a$ (in the right-hand-side premise of the given cut application)  contains at most one leaf $a_l$ which is principal. 
However, the procedure described above in the Parametric stage does not always produce cuts of lower height.
For instance, in the calculus introduced in Section \ref{DC}, this situation may arise when two ancestors of a cut term of type \textsf{Fm} are introduced as principal along the same branch, and then are identified via an application of the rule Contraction.}

Finally, as to the subcase in which both cut terms are parametric, consider a proof with at least one cut. The procedure is analogous to the previous case. Namely, following the history of one of the cut terms up to the leaves, and applying the transformation steps described above, we arrive at a situation in which, whenever new applications of cuts are generated, in each such application at least one of the cut formulas is principal. To each such cut, we can apply (the symmetric version of) the Parametric stage described so far.

\end{proof}


\section{The Dynamic Calculus for EAK}\label{DC}
    
As mentioned in the introduction, the key idea is to introduce a language in which not only formulas are generated from formulas and actions (as it happens in the symbol $\langle\alpha \rangle A$) and formulas are generated from formulas and agents (as it happens in the symbol $\langle\aga \rangle A$), but also {\em actions} are generated from the interactions between {\em agents} and {\em actions}. 

\paragraph{An algebraically motivated introduction.} In the present section, we define a {\em multi-type language} into which the language of (I)EAK translates, and in which each generation step mentioned above is explicitly accounted for via special binary connectives taking arguments of {\em different types}. More than one alternative is possible in this respect; our choice for the present setting consists of the following types: $\mathsf{Ag}$ for agents, $\mathsf{Fnc}$ for functional actions, $\mathsf{Act}$ for actions, and $\mathsf{Fm}$ for formulas.
%
We also stipulate that $\mathsf{Ag}$, $\mathsf{Act}$,  $\mathsf{Fm}$ and $\mathsf{Fnc}$ are pairwise disjoint.
The new connectives, and their types, are:
\begin{eqnarray}
{\mand}_0, {\mband}_{\! 0} \ & :&  \mathsf{Fnc} \times \mathsf{Fm} \to \mathsf{Fm} \\
{\mand}_1, {\mband}_{\! 1} \ & :&  \mathsf{Act} \times \mathsf{Fm} \to \mathsf{Fm} \\
{\mand}_2, {\mband}_{\! 2} \  & :& \mathsf{Ag} \times \mathsf{Fm} \to \mathsf{Fm}\\
{\mand}_3, {\mband}_{\! 3} \ & :&  \mathsf{Ag} \times \mathsf{Fnc} \to \mathsf{Act}
\end{eqnarray}
We stipulate that the interpretations of the connectives are  maps preserving existing joins in each coordinate (see below) with  algebras as domains and codomains suitable to interpret (functional) actions, formulas, and agents respectively. For instance,  suitable choices for  domains of interpretation for formulas can be  complete atomic Boolean algebras or  perfect Heyting algebras (cf.\ \cite{KP}); in the setting of e.g.\ epistemic action logic (cf.\ \cite{Ditmarsch1}), following \cite{Alexandru}, the domain of interpretation for actions can be a quantale or a relation algebra (of which the functional actions can be a sub-monoid). 
In the setting of EAK, in which no algebraic structure is required of actions and agents, a suitable domain of interpretation can be a complete join-semilattice, which is completely join-generated by a given subset (interpreting the functional actions),   and  the domain of interpretation of agents can be a set.
\footnote{Notice also that for other dynamic logics the domain of interpretation of agents might be endowed with some algebraic structure; for instance, in the case of game logic (cf.\ \cite{ParikhPauli}), the set of agents consists of two elements, on which a negation-type operation can be assumed.}

In Section \ref{sec:soundness}, the final coalgebra $Z$ (cf.\ \cite[Section 5]{GAV}) is taken as semantic environment for the Dynamic Calculus. In this setting, the boolean algebra $\mathcal{P} Z$ is taken as the domain of interpretation for  $\mathsf{Fm}$-type terms,
$\mathsf{Fnc}$-type terms are interpreted as graphs of partial functions on $Z$, subject to certain restrictions, and the domain of interpretation of $\mathsf{Act}$-type terms is the complete $\bigcup$-semilattice generated by the domain of interpretation of $\mathsf{Fnc}$.

%
%

In all the domains of interpretation which are complete lattices (i.e.\ the algebras interpreting terms of type $\mathsf{Fm}$ and $\mathsf{Act}$), the fact that the interpretation of each connective $\mand$ and $\mband$  is completely join-preserving in its second coordinate
%
implies  that it
 has a right adjoint in its second coordinate. These right adjoints provide natural interpretation for   the following additional connectives:
 \begin{eqnarray}
{\mbra}_{\! 0} \ , {\mra}_{\! 0} \ & :&  \mathsf{Fnc} \times \mathsf{Fm} \to \mathsf{Fm} \\
{\mbra}_{\! 1} \ , {\mra}_{\! 1} \ & :&  \mathsf{Act} \times \mathsf{Fm} \to \mathsf{Fm} \\
{\mbra}_{\! 2} \ , {\mra}_{\! 2} \  & :& \mathsf{Ag} \times \mathsf{Fm} \to \mathsf{Fm}.
\end{eqnarray}
\noindent The assumptions above imply that ${\mand}_1$ and $\mband_1$ have right adjoints also   in their first coordinate. Hence, each of the following connectives can be naturally interpreted, in the setting above,  as the right adjoint of ${\mand}_1$ and $\mband_1$ respectively:

 \begin{eqnarray}
{\mbla}_{\! 1} \ , {\nla}_{\! 1} \ & :&  \mathsf{Fm}  \times \mathsf{Fm} \to \mathsf{Act}.
\end{eqnarray}
\noindent Intuitively, for all formulas $A, B$, the term $B {\mbla}_{\! 1}   A$ denotes the weakest epistemic action $\gamma$ such that, if $A$ was true before $\gamma$ was performed, then $B$ is true after any successful execution of $\gamma$. This is also related to to Vaughn Pratt's notion of weakest preserver (cf.\ \cite[Section 4.2]{Pra91})
\noindent However, we cannot assume that more adjoints exist, which would provide semantic interpretation for the following symbols:

 \begin{eqnarray*}
{\msbla}_{\! 0} \ , {\msla}_{\! 0} \ & :&  \mathsf{Fm}  \times \mathsf{Fm} \to \mathsf{Fnc}\\
{\msbla}_{\! 2} \ , {\msla}_{\! 2} \  & :& \mathsf{Fm}\times \mathsf{Fm} \to \mathsf{Ag}\\
{\msbla}_{\! 3} \ , {\msla}_{\! 3} \ & :&  \mathsf{Act} \times \mathsf{Fnc} \to \mathsf{Ag}\\
{\msbra}_{\! 3} \ , {\msra}_{\! 3} \ & :&  \mathsf{Ag} \times \mathsf{Act} \to \mathsf{Fnc}.
\end{eqnarray*}


\paragraph{Virtual adjoints.}\label{virtual adjoints} We adopt the following notational convention about the  three different shapes of arrows  introduced so far.
Arrows with  straight tails ($\mra$ and $\mbra$) stand for connectives which have a semantic counterpart and which are included in the language of the Dynamic Calculus (see the grammar of operational terms on page \pageref{page:def-formulas});
arrows with no tail  (e.g.\ $\mbla$ and $\nla$) do have a semantic interpretation but are {\em not} included in the language at the operational level, and
arrows with  squiggly tails ($\msra$, $\msla$, $\msbra$ and $\msbla$) stand for syntactic objects, called \textit{virtual adjoints}, which do not have a semantic interpretation, but  will play an important role, namely guaranteeing the Dynamic Calculus to enjoy the relativized display property (cf.\ Definition \ref{def: relativized display prop}). 
%
In what follows, virtual adjoints will be introduced {\em only} as structural connectives. That is,  they will not correspond to any operational connective, and they will not appear actively in any rule schema other than the display postulates (cf.\ Definition \ref{def: display prop}). 
As will be shown in Section \ref{sec : safe virtual adjoints}, these limitations  keep the calculus sound even if virtual adjoints do not have an independent semantic interpretation. 

\noindent The ${\mand}\dashv \mbra$ and $\mband\dashv \mra$ adjunction relations stipulated above translate into the following clauses  for every agent $\aga$, every functional action $\alpha$, every action $\gamma$, and every formula $A$:
\begin{eqnarray}
\alpha \mand_0 A\leq B\ \mbox{ iff }\ A\leq \alpha\mbra_{\!0} \ B  &\quad&  \alpha \mband_{\! 0} \  A\leq B\ \mbox{ iff }\ A\leq \alpha{\mra}_{\!0} \ B
\label{adjunction:heterogeneous0}
\\
\label{adjunction:heterogeneous}
\gamma \mand_1 A\leq B\ \mbox{ iff }\ A\leq \gamma\mbra_{\!1} \ B  &\quad&  \gamma \mband_{\! 1} \  A\leq B\ \mbox{ iff }\ A\leq \gamma{\mra}_{\!1} \ B
\\
\label{adjunction:heterogeneous1}
\aga \mand_2 A\leq B\ \mbox{ iff }\ A\leq \aga\mbra_{\!2} \ B  &\quad&  \aga \mband_{\! 2} \  A\leq B\ \mbox{ iff }\ A\leq \aga{\mra}_{\!2} \ B.
\end{eqnarray}
\noindent The adjunction relations ${\mand_1}\dashv \mbla_1$ and $\mband_{\! 1} \ \dashv \nla_1$ translate into the
following clauses for every action $\gamma$ and every formula $A$:
\begin{eqnarray}
\label{adjunction:heterogeneous:bis}
\gamma \mand_1 A\leq B\ \mbox{ iff }\
 \gamma \leq B \mbla_{\!1} A  &\quad&  \gamma \mband_{\! 1} \  A\leq B\ \mbox{ iff }\ \gamma \leq B \nla_{\!1} A.
\end{eqnarray}

As we will see, the display postulates corresponding to triangle- and arrow-shaped connectives are modelled over the conditions \eqref{adjunction:heterogeneous0}-\eqref{adjunction:heterogeneous:bis} above. Also the display postulates involving virtual adjoints are shaped in the same way, which explains their name.

\commment{
\noindent Here below, the conditions relative to the adjunction in the first coordinate for $i=0,2,3$: 
\begin{eqnarray}
\label{adjunction:heterogeneous-weird}
\alpha \mand_0 A\leq B\ \mbox{ iff }\ \alpha \leq B \msbla_{\!0} A  &\quad&  \alpha \mband_{\! 0} \  A\leq B\ \mbox{ iff }\ \alpha \leq B{\msla}_{\!0} A
\\
\label{adjunction:heterogeneous1-weird}
\aga \mand_2 A\leq B\ \mbox{ iff }\ \aga\leq B \msbla_{\!2} A  &\quad&  \aga \mband_{\! 2} \  A\leq B\ \mbox{ iff }\ \aga\leq B {\msla}_{\!2} A
\\
\label{adjunction:heterogeneous2-weird}
\aga \mand_3 \alpha\leq \gamma \ \mbox{ iff }\ \aga\leq \gamma \msbla_{\!3} \alpha  &\quad&  \aga \mband_3\alpha\leq \gamma \ \mbox{ iff }\ \aga\leq \gamma {\msla}_{\!3} \alpha\\
\label{adjunction:heterogeneous3-weird}
\aga \mand_3 \alpha\leq \beta\ \mbox{ iff }\ \alpha\leq \aga\msbra_{\!3} \beta  &\quad&  \aga \mband_3\alpha\leq \beta\ \mbox{ iff }\ \alpha\leq \aga{\msra}_{\!3} \beta.
\end{eqnarray}
}
\paragraph{Translating D'.EAK into the multi-type setting.} \label{translation DEAK to DC}
The intended link between the language of D'.EAK (cf.\ Section \ref{ssec:D'.EAK prelim})
and the language of the Dynamic Calculus is illustrated in the following table: 
\begin{center}
\begin{tabular}{c c c || c c c}
$\langle\alpha \rangle A$   & becomes & $\alpha\mand_0 A$ & $\RESalphaDia A$  & becomes & $\alpha\mband_{\! 0} \  A$ \\
 $\langle\aga \rangle A$ & becomes & $\aga\mand_2 A$ & $\RESagaDia A$  & becomes &$\aga\mband_{\! 2} \  A$\\
 $[\alpha] A$  & becomes & $\alpha\mra_{\! 0} \ A$ & $\RESalphaBox A$  & becomes &  $\alpha\mbra_{\! 0} \ A$ \\
 $[\aga] A$  & becomes &$\aga\mra_{\! 2} \ A$ & $\RESagaBox A$   & becomes &  $\aga\mbra_{\! 2} \ A$ \\
$1_\alpha$ & becomes & $\alpha \mand_0 \top$.
\end{tabular}
\end{center}
The table above can be extended to the definition of a formal translation between the operational language of D'.EAK and that of the Dynamic Calculus, simply by preserving the non modal propositional fragment.
We omit the details of this straightforward inductive definition.
In Section \ref{sec:soundness}, 
this translation will be elaborated on, and the interpretation of the language of the Dynamic Calculus in the final coalgebra will be defined so that the translation above preserves the validity of sequents.
In the light of this translation, the adjunction conditions in clauses (\ref{adjunction:heterogeneous0})  correspond to the   adjunction conditions \eqref{eqnarray : adjunction D'EAK} in D'.EAK, which, in their turn, motivate the display postulates reported on in Section \ref{ssec:D'.EAK prelim}:
\[
\langle\alpha\rangle \dashv \RESalphaBox \quad \quad \RESalphaDia\dashv[\alpha].
\]
The connectives $\mand_3$ and $\mband_3$ have no counterpart in the language of D'.EAK, but  the introduction of $\mband_3$ is exactly what brings the additional expressiveness we need in order to eliminate the label.   Indeed, we stipulate that for every $\aga$ and $\alpha$ as above,
\begin{equation}
\label{def: mband3}
\aga\mband_{\! 3} \   \alpha = \bigvee\{\beta\mid \alpha\aga\beta\}.
\end{equation}
A way to understand this stipulation is in the light of the discussion in \cite[Section 4.3]{GAV}  after clause (8).
There, in the context of a discussion about the proof system  in \cite{Alexandru}, the link between the semantic condition $f^M_A(m \star q) \leq f^M_A(m) \star f^Q_A(q)$ (cf.\ \cite[Definitions 2.2(2) and 2.3]{Alexandru}) and the axiom (\ref{eq:interact axiom})---which in \cite{Alexandru} was left implicit---is made more explicit, by understanding the action $f^Q_A(q)$ as the join, taken in $Q$, of all the actions $q'$ which are indistinguishable from $q$ for the agent $A$.  In the present setting, the stipulation (\ref{def: mband3}) says that  $\aga\mband_{\! 3} \   \alpha$ encodes exactly the same information encoded in $f^Q_A(q)$,  namely, the nondeterministic choice between all the actions that are indistinguishable from $\alpha$ for the agent $\aga$. 

\paragraph{Additional conditions.} As was the case in the setting of D'.EAK, in order to express in this new language that e.g.~$\langle\alpha\rangle$ and $[\alpha]$ are ``interpreted over the same relation'', Sahlqvist correspondence theory (cf.\ e.g.\ \cite{ALBA, ConPalSou, CGP} for a state-of-the art-treatment) provides us with two alternatives: one of them is that we impose the following Fischer Servi-type conditions  to hold for every $\aga\in \mathsf{Ag}$, $\alpha\in \mathsf{Fnc}$, $\gamma\in \mathsf{Act}$ and $A, B\in \mathsf{Fm}$:
\begin{eqnarray*}
( \alpha \mand_0 A)\rightarrow (\alpha \mra_{\! 0} \  B)  \leq \alpha \mra_{\! 0} \  (A\rightarrow B)& &( \alpha \mband_{\! 0} \  A)\rightarrow (\alpha \mbra_{\! 0} \  B)  \leq     \alpha \mbra_{\! 0} \  (A\rightarrow B)          \\
( \gamma \mand_1 A)\rightarrow (\gamma \mra_{\! 1} \  B)  \leq \gamma \mra_{\! 1} \  (A\rightarrow B)& &( \gamma \mband_{\! 1} \  A)\rightarrow (\gamma \mbra_{\! 1} \  B)  \leq     \gamma \mbra_{\! 1} \  (A\rightarrow B)          \\
( \aga \mand_2 A)\rightarrow (\aga \mra_{\! 2} \  B)  \leq \aga \mra_{\! 2} \  (A\rightarrow B) & &  ( \aga \mband_{\! 2} \  A)\rightarrow (\aga \mbra_{\! 2} \  B)    \leq   \aga \mbra_{\! 2} \  (A\rightarrow B).\\
~\\
\alpha \mand_{ 0}   (A \pdra B)   \leq
( \alpha \mra_{\! 0}\ A) \pdra (\alpha \mand_{ 0}   B)
& &
  \alpha \mband_{\! 0} \   (A \pdra B)
\leq  ( \alpha \mbra_{\! 0} \  A) \pdra (\alpha \mband_{\! 0} \  B)           \\
 \gamma \mand_{ 1}   (A \pdra B) \leq ( \gamma \mra_{\! 1} \  A) \pdra (\gamma \mand_{ 1}   B)
& &
 \gamma \mband_{\! 1} \  (A \pdra B)  \leq  ( \gamma \mbra_{\! 1} \  A) \pdra (\gamma \mband_{\! 1} \  B)            \\
 \aga \mand_{ 2}   (A \pdra B) \leq ( \aga \mra_{\! 2} \ A) \pdra (\aga \mand_{ 2}   B)
 & &  \aga \mband_{\! 2} \  (A \pdra B) \leq  ( \aga \mbra_{\! 2} \  A) \pdra (\aga \mband_{\! 2} \  B)    .
\end{eqnarray*}

%
To see that the conditions above correspond to the usual Fischer Servi axioms in standard modal languages, one can observe that
 the conditions in the first and third line above are  images, under the translation discussed above, of the Fischer Servi axioms reported on in  Section \ref{ssec:IEAK}).
The second alternative is to impose that, for every $0\leq i\leq 2$, the connectives $\mand_i$ and $\mband_i$ yield  {\em conjugated} diamonds (cf.\ discussion in \cite[Section 6.2]{GAV}); that is, the following inequalities hold for all $\aga\in \mathsf{Ag}$, $\alpha, \beta\in \mathsf{Fnc}$, and $A, B\in \mathsf{Fm}$:
\begin{eqnarray*}
(\alpha\mand_0 A)\wedge B\leq \alpha\mand_0 (A\wedge \alpha \mband_{\! 0} \  B) &\quad  & (\alpha\mband_{\! 0} \  A)\wedge B\leq \alpha\mband_{\! 0} \  (A\wedge \alpha \mand_0 B)\\
(\gamma\mand_1 A)\wedge B\leq \gamma\mand_1 (A\wedge \gamma \mband_{\! 1} \  B) & & (\gamma\mband_{\! 1} \  A)\wedge B\leq \gamma\mband_{\! 1} \  (A\wedge \gamma \mand_1 B)\\
(\aga\mand_2 A)\wedge B\leq \aga\mand_2 (A\wedge \aga \mband_{\! 2} \  B) & & (\aga\mband_{\! 2} \  A)\wedge B\leq \aga\mband_{\! 2} \  (A\wedge \aga \mand_2 B).\\
~\\
\alpha\mra_0 (A\vee \alpha \mbra_{\! 0} \  B) \leq (\alpha\mra_0 A)\vee B
  &  &
   \alpha\mbra_{\! 0} \  (A\vee \alpha \mra_0 B)
  \leq (\alpha\mbra_{\! 0} \  A)\vee B
  \\
\gamma\mra_1 (A\vee \gamma \mbra_{\! 1} \  B)
\leq (\gamma\mra_1 A)\vee B
&  &
\gamma\mbra_{\! 1} \  (A\vee \gamma \mra_1 B)
\leq (\gamma\mbra_{\! 1} \  A)\vee B
\\
\aga\mra_2 (A\vee \aga \mbra_{\! 2} \  B)
\leq (\aga\mra_2 A)\vee B
&  &
\aga\mbra_{\! 2} \  (A\vee \aga \mra_2 B)
\leq (\aga\mbra_{\! 2} \  A)\vee B.
\end{eqnarray*}
The conditions in the first and third line above are  images, under the translation discussed above, of the conjugation  conditions reported on in \cite[Section 6.2]{GAV}.

\paragraph{The operational language, formally.} 
Let us introduce the operational terms of the multi-type language  by the following simultaneous induction, based on sets $\mathsf{AtProp}$ of atomic propositions,  $\mathsf{Fnc}$ of functional actions, and  $\mathsf{Ag}$ of agents: \label{page:def-formulas}
%
\begin{align*}
 \mathsf{Fm} \ni A:: = \; & p \mid \bot\mid \top \mid A\wedge A\mid A\vee A\mid A \rightarrow A\mid A \pdra A\mid \\
 & \alpha\mand_0 A\mid \alpha\mra_{\! 0} \  A  \mid
 \gamma\mand_1 A\mid \gamma\mra_{\! 1} \  A  \mid
 \aga\mand_2 A\mid \aga\mra_{\! 2} \  A \mid
 \\
&
 \alpha\mband_{\! 0} \  A\mid \alpha\mbra_{\! 0} \  A \mid
\gamma\mband_{\! 1} \  A\mid \gamma\mbra_{\! 1} \  A \mid
\aga\mband_{\! 2} \  A\mid \aga\mbra_{\! 2} \  A
\\
 \mathsf{Fnc} \ni \alpha:: = \; & \alpha \\
\mathsf{Act}\ni \gamma:: =\; & 
\aga\mband_{\! 3} \  \alpha \mid \aga\mand_3 \alpha \\
 \mathsf{Ag} \ni \aga:: = \;& \aga
\end{align*}
%
The fundamental difference between the language above and the language of D'.EAK is that, in D'.EAK, agents and actions are {\em parametric indexes} in the construction of formulas, which are the only first-class citizens. In the present setting, however, each type lives on a par with any other. Because of the relative simplicity of the EAK setting, two of the four types are attributed  no algebraic structure at the operational level. However, it is not difficult to enrich the algebraic structure of those types with sensible and intuitive operations: for instance, the \textsf{skip} and \textsf{crash} actions are functional, and parallel and sequential composition and iteration on functional actions preserve functionality, hence can be added to the array of constructors for \textsf{Fnc}. As a consequence of the fact that each type is a first-class citizen, as we will see shortly, four types of structures will be defined, and the turnstile symbol in the sequents of this calculus will be interpreted in the appropriate domain. 

\paragraph{On the meta-linguistic labels $\alpha\aga\beta$.} Let us illustrate how the label $\alpha\aga\beta$ can be subsumed when translating D'.EAK-formulas in the multi-type language. Consider for example  (the intuitionistic counterparts of) the following axiom (cf.~(\ref{eq:interact axiom})): 
\[ (\ Pre(\alpha)\rightarrow  \bigwedge\{[\aga]\ls\beta\rs  A \mid \alpha\aga\beta\} \ )
\rightarrow \ls\alpha\rs[\aga] A
 \] 
%
By applying the translation above we get:
\[ (\ \alpha \mand_0 \top \rightarrow  \bigwedge\{\aga \mra_{\! 2} \ (\beta \mra_{\! 0} \  A)\mid \alpha\aga\beta\} \ ) \rightarrow  \alpha \mra_{\! 0} \ (\aga \mra_{\! 2} \ A) .\]
Since (the semantic interpretation of) $\mra_{\! 2} \ $ is completely meet-preserving in the second coordinate, the clause above is semantically equivalent to the following one:
\[ (\ \alpha \mand_0 \top \rightarrow  \big[ \aga \mra_{\! 2} \  \bigwedge\{\beta \mra_{\! 0} \  A\mid \alpha\aga\beta\} \big] \ ) \rightarrow   \alpha \mra_{\! 0} \ (\aga \mra_{\! 2} \ A).\]
The next step is the only place of the chapter in which we will need to assume that (the domains of interpretation of $\mathsf{Fcn}$ and $\mathsf{Act}$ are such that) $\mathsf{Fcn}\subseteq \mathsf{Act}$. Under this assumption, $\mra_{\! 0} \ $ can be taken as  the restriction of $\mra_{\! 1} \ $. By general order-theoretic facts (see e.g.\ \cite{DaveyPriestley2002}), the latter is  completely join-reversing in its first coordinate. Hence, we can equivalently rewrite the clause above as follows:
\[ (\ \alpha \mand_0 \top \rightarrow  \big[ \aga \mra_{\! 2} \  \big( \bigvee\{\beta \mid \alpha\aga\beta\} \mra_{\! 1} \  A \big) \big]   \ )  \rightarrow  \alpha \mra_{\! 0} \ (\aga \mra_{\! 2} \ A).\]
Now we apply the stipulation (\ref{def: mband3}) and get the following :
\begin{equation}\label{DC:axiom1} 
(\ \alpha \mand_0 \top \rightarrow   \big[ \aga \mra_{\! 2} \  \big((\aga \mband_{\! 3} \  \alpha) \mra_{\! 1} \  A \big) \big]    \ ) \rightarrow    \alpha \mra_{\! 0} \ (\aga \mra_{\! 2} \ A).\end{equation}
An analogous argument justifies that the following axiom:
\[\lc \alpha \rc \lc\aga\rc A \rightarrow (\ Pre(\alpha) \wedge \bigvee\{\lc\aga\rc \lc\beta \rc A\mid \alpha\aga\beta\} \ ) \]
corresponds to:
\begin{equation}\label{DC:axiom2}\alpha \mand_0( \aga\mand_2 A) \rightarrow ( \ \alpha \mand_0 \top \wedge   \aga\mand_2 [(\aga \mband_{\! 3} \  \alpha)\mand_1 A] \ ).\end{equation}
Without appealing to $\mathsf{Fcn}\subseteq \mathsf{Act}$, we  could take  the correspondences above as primitive stipulations.

\paragraph{Structural language, formally.} As discussed in the preliminaries, display calculi manipulate two closely related languages: the operational and the structural. Let us introduce the structural language of the Dynamic Calculus, which as usual matches the operational language, although in the present case not in the same way as in D'.EAK. We have {\em formula-type} structures, {\em functional action-type structures},  {\em action-type structures}, {\em agent-type structures}, defined by simultaneous recursion as follows: 
\begin{align*}
 \mathsf{FM} \ni X:: = \; & A \mid \textrm{I} \mid X~; X\mid X > X\mid \\
 & F \mAND_{\! 0} \ X\mid F \mRA_{\!\! 0} \  X \mid
 \Gamma\mAND_{\! 1} \  X\mid \Gamma\mRA_{\!\! 1} \  X \mid
 \agA\mAND_{\! 2} \  X\mid \agA\mRA_{\!\! 2} \  X \mid\\
 & F \mBAND_{\! 0} \  X\mid F \mBRA_{\!\! 0} \  X \mid
\Gamma\mBAND_{\! 1} \  X\mid \Gamma\mBRA_{\!\! 1} \  X \mid
 \agA\mBAND_{\! 2} \  X\mid \agA\mBRA_{\!\! 2} \  X
 \\
~\\
\mathsf{FNC} \ni F:: = \; & \alpha\mid X \mSLA_{\!\! 0} \   X \mid X \mSBLA_{\!\! 0} \  X\mid  \agA \mSRA_{\!\! 3} \  \Gamma\mid \agA \mSBRA_{\!\! 3} \  \Gamma\\
~\\
\mathsf{ACT} \ni \Gamma:: =\; & 
\agA\mBAND_{\! 3} \  F  \mid
\agA\mAND_{\! 3} \  F \mid  X \mLA_{\! 1} \   X \mid X \mBLA_{\! 1} \  X \\
~\\
\mathsf{AG} \ni \agA:: =  \; &\aga \mid X \mSLA_{\!\! 2} \   X \mid X \mSBLA_{\!\! 2} \  X \mid \Gamma \mSLA_{\!\! 3} \  F \mid \Gamma \mSBLA_{\!\! 3} \  F.
\end{align*}

\paragraph{The propositional base.} As is typical of display calculi, each operational connective corresponds to one structural connective. In particular, the propositional base connectives behave exactly as in D'.EAK, but for the sake of self-containment, we are going to report on these rules below: 

\begin{center}
\begin{tabular}{|r|c|c|c|c|c|c|c|c|c}
 \hline
 \scriptsize{Structural symbols} & \mc{2}{c|}{$<$}   & \mc{2}{c|}{$>$} & \mc{2}{c|}{$;$} & \mc{2}{c|}{I}    \\
 \hline
 \scriptsize{Operational symbols} & $\pdla$ & $\leftarrow$ & $\pdra$ & $\rightarrow$  & $\wedge$ & $\vee$  & $\top$ & $\bot$    \\
\hline
\end{tabular}

\end{center}
\vspace{10px}

{\scriptsize{
\begin{center}
\begin{tabular}{rl}

\mc{2}{c}{\textbf{Structural Rules}} \\

& \\

\AXC{\phantom{$p \fCenter p$}}
\LeftLabel{\fns$Id$}
\UI$p \fCenter p$
\DisplayProof &
\AX$X \fCenter A$
\AX$A \fCenter Y$
\RightLabel{\fns$Cut$}
\BI$X \fCenter Y$
\DisplayProof \\

 & \\

\AX$X \fCenter Y$
\doubleLine
\LeftLabel{\fns$\textrm{I}^1_{L}$}
\UI$\textrm{I} \fCenter Y < X$
\DisplayProof &
\AX$X \fCenter Y$
\doubleLine
\RightLabel{\fns$\textrm{I}^1_{R}$}
\UI$X < Y \fCenter \textrm{I}$
\DisplayProof \\

 & \\

\AX$X \fCenter Y$
\LeftLabel{\fns$\textrm{I}^2_{L}$}
\doubleLine
\UI$\textrm{I} \fCenter X > Y$
\DisplayProof &
\AX$X \fCenter Y$
\RightLabel{\fns$\textrm{I}^2_{R}$}
\doubleLine
\UI$Y > X \fCenter \textrm{I}$
\DisplayProof \\

 & \\

\AX$\textrm{I}  \fCenter X$
\LeftLabel{\fns$\textrm{I} W_{L}$}
\UI$Y \fCenter X $
\DisplayProof &
\AX$X \fCenter \textrm{I}$
\RightLabel{\fns$\textrm{I} W_{R}$}

\UI$X \fCenter Y$
\DisplayProof \\

 & \\

\AX$X \fCenter Z$
\LeftLabel{\fns$W^1_{L}$}
\UI$Y \fCenter Z < X$
\DisplayProof &
\AX$X \fCenter Z$
\RightLabel{\fns$W^1_{R}$}
\UI$X < Z \fCenter Y$
\DisplayProof \\

 & \\

\AX$X \fCenter Z$
\LeftLabel{\fns$W^2_{L}$}
\UI$Y \fCenter X > Z$
\DisplayProof &
\AX$X \fCenter Z$
\RightLabel{\fns$W^2_{R}$}
\UI$Z > X \fCenter Y$
\DisplayProof \\

 & \\

\AX$X \,; X \fCenter Y$
\LeftLabel{\fns$C_L$}
\UI$X \fCenter Y $
\DisplayProof &
\AX$Y \fCenter X \,; X$
\RightLabel{$C_R$}
\UI$Y \fCenter X$
\DisplayProof \\

 & \\

\AX$Y \,; X \fCenter Z$
\LeftLabel{\fns$E_L$}
\UI$X \,; Y \fCenter Z $
\DisplayProof &
\AX$Z \fCenter X \,; Y$
\RightLabel{\fns$E_R$}
\UI$Z \fCenter Y \,; X$
\DisplayProof \\

 & \\

\AX$X \,; (Y \,; Z) \fCenter W$
\LeftLabel{\fns$A_{L}$}
\UI$(X \,; Y) \,; Z \fCenter W $
\DisplayProof &
\AX$W \fCenter (Z \,; Y) \,; X$
\RightLabel{\fns$A_{R}$}
\UI$W \fCenter Z \,; (Y \,; X)$
\DisplayProof \\

 & \\
\end{tabular}
\end{center}
}
}

{\scriptsize{
\begin{center}
\begin{tabular}{rl}

\mc{2}{c}{\textbf{Display Postulates}} \\

 & \\

\AX$X\, ; Y \fCenter Z$
\LeftLabel{\fns$(\;, <)$}
\doubleLine
\UI$X \fCenter Z < Y$
\DisplayProof &
\AX$Z \fCenter X\ ; Y$
\RightLabel{\fns$(<, ;)$}
\doubleLine
\UI$Z < Y \fCenter X$
\DisplayProof \\

 & \\

\AX$X\, ; Y \fCenter Z$
\LeftLabel{\fns$(\;, >)$}
\doubleLine
\UI$Y \fCenter X > Z$
\DisplayProof &
\AX$Z \fCenter X\, ; Y$
\RightLabel{\fns$(>, ;)$}
\doubleLine
\UI$X > Z \fCenter Y$
\DisplayProof \\

 & \\
\end{tabular}
\end{center}
}
}

\noindent The classical base is obtained by adding the so-called \emph{Grishin} rules (following e.g.~\cite{Gore}), which encode  classical, but not intuitionistic validities:
\label{page : Grishin rules}
{\scriptsize{
\begin{center}
\begin{tabular}{rl}


 & \\

\AX$X\, > ( Y\ ;Z) \fCenter W$
\LeftLabel{\fns$Gri_L$}
\doubleLine
\UI$(X>Y)\ ; Z \fCenter W$
\DisplayProof &
\AX$W \fCenter X >(Y\ ; Z)$
\RightLabel{\fns$Gri_R$}
\doubleLine
\UI$W \fCenter (X> Y)\ ;Z$
\DisplayProof \\

 & \\
\end{tabular}
\end{center}
}
}

{\scriptsize{
\begin{center}
\begin{tabular}{rl}
\mc{2}{c}{\textbf{Operational Rules}} \\
 & \\

\AXC{\phantom{$\bot \fCenter \textrm{I}$}}
\LeftLabel{\fns$\bot_L$}
\UI$\bot \fCenter \textrm{I}$
\DisplayProof &
\AX$X \fCenter \textrm{I}$
\RightLabel{\fns$\bot_R$}
\UI$X \fCenter \bot$
\DisplayProof \\

 & \\

\AX$\textrm{I} \fCenter X$
\LeftLabel{\fns$\top_L$}
\UI$\top \fCenter X$
\DisplayProof
 &
\AXC{\phantom{$\textrm{I} \fCenter \top$}}
\RightLabel{\fns$\top_R$}
\UI$\textrm{I} \fCenter \top$
\DisplayProof\\

 & \\

\AX$A \,; B \fCenter Z$
\LeftLabel{\fns$\pand_L$}
\UI$A \pand B \fCenter Z$
\DisplayProof &
\AX$X \fCenter A$
\AX$Y \fCenter B$
\RightLabel{\fns$\pand_R$}
\BI$X \,; Y \fCenter A \pand B$
\DisplayProof
\ \, \\

 & \\
\AX$A \fCenter X$
\AX$B \fCenter Y$
\LeftLabel{\fns$\vee_L$}
\BI$A \vee B \fCenter X \,; Y$
\DisplayProof &
\AX$Z \fCenter A \,; B$
\RightLabel{\fns$\vee_R$}
\UI$Z \fCenter A \vee B $
\DisplayProof \\

 & \\
\ \,

\AX$B \fCenter Y  $
\AX$X \fCenter A  $
\LeftLabel{\fns$\leftarrow_L$}
\BI$B \leftarrow A \fCenter Y<X$
\DisplayProof &
\AX$Z \fCenter B<A$
\RightLabel{\fns$\leftarrow_R$}
\UI$Z \fCenter B \leftarrow A$

\DisplayProof \\

 & \\
\ \,
\AX$B <A \fCenter Z$
\LeftLabel{\fns$\pdla_L$}
\UI$B \pdla A \fCenter Z$
\DisplayProof &
\AX$Y \fCenter B$
\AX$ A \fCenter X$
\RightLabel{\fns$\pdla_R$}
\BI$ Y <  X  \fCenter B \pdla A $
\DisplayProof \\

 & \\
\ \,
\AX$X \fCenter A$
\AX$B \fCenter Y$
\LeftLabel{\fns$\rightarrow_L$}
\BI$A \rightarrow B \fCenter X > Y$
\DisplayProof &
\AX$Z \fCenter A > B$
\RightLabel{\fns$\rightarrow_R$}
\UI$Z \fCenter A \rightarrow B $
\DisplayProof \\

 & \\
\ \,
\AX$A> B\fCenter Z$
\LeftLabel{\fns$\pdra_L$}
\UI$A \pdra B \fCenter Z$
\DisplayProof &
\AX$A \fCenter X$
\AX$ Y  \fCenter B$
\RightLabel{\fns$\pdra_R$}
\BI$X>Y \fCenter A \pdra B $
\DisplayProof \\

 & \\

\end{tabular}
\end{center}
}
}

\paragraph{Rules for heterogeneous connectives.} Unlike what was the case in the setting of D'.EAK, in  the present setting, each heterogeneous structural connective is associated with at most one operational connective, as illustrated in the following table: for $0\leq i\leq 3$ and $j\in \{0, 2\}$,

\begin{center}
\begin{tabular}{| c | c|c|c | c | c|c|c|c|c|c|c|c|c|c|c|}
\hline
\scriptsize{Structural symbols}  &
\mc{2}{c|}{$\mAND_{\! i}$}&\mc{2}{c|}{ $\mBAND_{\! i}$}&\mc{2}{c|}{$\mRA_{\!\! j}$}&\mc{2}{c|}{ $\mBRA_{\!\! j} $} \\
\hline
\scriptsize{Operational symbols} &
$\mand_{ i}$&\phantom{$\mand_{ i}$}& $\mband_{\! i}$&\phantom{$\mand_{\! i}$}&\phantom{$\mra_{\!\! j}$}&$\mra_{\!\! j}$&\phantom{$\mra_{\!\! j}$}& $\mbra_{\!\! j}$ \\
\hline
\end{tabular}
\end{center}
That is,  structural connectives are to be interpreted as usual in a context-sensitive way, but the present language  lacks the operational connectives which would correspond to them on one of the two sides. This is of course because in the present setting we do not need them.
However, in a setting in which they would turn out to be needed, it would not be difficult to introduce the missing operational connectives.
We can now introduce the operational rules for  heterogeneous connectives. Let $x, y$ stand for structures of an undefined type, and let $a, b$ denote operational terms of the appropriate type.  Then, for $0\leq i\leq 3,$

\begin{center}
\begin{tabular}{cc}
\\
\AX$a \mAND_{\! i} \ b \fCenter z$
\LeftLabel{\fns${\mand_i}_L$}
\UI$a \mand_i b \fCenter z$
\DisplayProof &
\AX$x \fCenter a$
\AX$y \fCenter b$
\RightLabel{\fns${\mand_i}_R$}
\BI$x \mAND_{\! i} \  y \fCenter a \mand_i b$
\DisplayProof
\\

\\
\AX$a \mBAND_{\! i} \  b \fCenter z$
\LeftLabel{\fns${\mband_{\! i}}_L$}
\UI$a \mband_{\! i} \  b \fCenter z$
\DisplayProof &
\AX$x \fCenter a$
\AX$y \fCenter b$
\RightLabel{\fns${\mband_{\! i}}_R$}
\BI$x \mBAND_{\! i} \  y \fCenter a \mband_{\! i} \  b$
\DisplayProof
\\
\end{tabular}
\end{center}

and for $0\leq i\leq 2,$

\begin{center}
\begin{tabular}{cc}
\\
\AX$x \fCenter a$
\AX$B \fCenter Y$
\LeftLabel{\fns${\mra_{\!\!i}}_L$}
\BI$a {\mra_{\!\!i}} B \fCenter x {\mRA_{\!\!i}} Y$
\DisplayProof &
\AX$Z \fCenter a \mRA_{\!\!i} B$
\RightLabel{\fns${\mra_{\!\!i}}_R$}
\UI$Z \fCenter a \mra_{\!\!i} B $
\DisplayProof \\

\\

\AX$x \fCenter a$
\AX$B \fCenter Y$
\LeftLabel{\fns${\mbra_{\!\!i}}_L$}
\BI$a {\mbra_{\!\!i}} B \fCenter x {\mBRA_{\!\!i}} Y$
\DisplayProof &
\AX$Z \fCenter a \mBRA_{\!\!i} B$
\RightLabel{\fns${\mbra_{\!\!i}}_R$}
\UI$Z \fCenter a \mbra_{\!\!i} B $
\DisplayProof \\
\end{tabular}
\end{center}
where $B, Y, Z$ are formula-type operational and structural terms. Clearly, the rules in the two tables above for $i = 0, 2$ yield the operational rules for the dynamic and epistemic modal operators under the translation given early on. Notice that each sequent is always interpreted in one domain. However, since heterogeneous connectives take arguments of different types (which justifies their name), premises of binary rules are of course interpreted in different domains.

\noindent Axioms  will be given in three types\footnote{Indeed, there is no axiom schema for atomic terms of type $\mathsf{Act}$, because the language does not admit them.}, as follows:

\begin{center}
\begin{tabular}{c c c c c c c}
$\aga \vdash \aga$ & $\alpha \vdash \alpha$  & $p\vdash p$  & $\bot \vdash \textrm{I} $& $ \textrm{I} \vdash \top$\\

\\

\end{tabular}
\end{center}
\noindent where  the first and second axioms from the left are of type $\mathsf{Ag}$ and $\mathsf{Fnc}$ respectively, and the remaining ones are of type $\mathsf{Fm}$. A generalization of $p\vdash p$ will be added below to the system (see {\em atom} axiom on page \pageref{page:atom}). 

\noindent Further, we allow the following {\em strongly type-uniform} (cf.\ Definition \ref{def:strong-type-uniformity}) cut rules on operational terms:
\[
\AX$\agA \fCenter \aga$
\AX$\aga \fCenter \agB$
\BI$\agA \fCenter \agB$
\DisplayProof
\quad
\quad
\AX$F \fCenter \alpha$
\AX$\alpha \fCenter G$
\BI$F \fCenter G$
\DisplayProof
\quad
\quad
\AX$\Gamma \fCenter \gamma$
\AX$\gamma \fCenter \Delta$
\BI$\Gamma \fCenter \Delta$
\DisplayProof
\quad
\quad
\AX$X\fCenter A$
\AX$A \fCenter Y$
\BI$X \fCenter Y$
\DisplayProof
\]

\noindent Next,  we give the display postulates for heterogeneous connectives. In what follows, let $x, y, z$ stand for structures of an undefined type. Then, for $0\leq i\leq 2,$

\begin{center}
\begin{tabular}{c c}
\AX$x \mAND_{\!i} \  y\fCenter z$
\LeftLabel{\scriptsize{$(\mand_i , \mbra_{\!\!i})$}}
\doubleLine
\UI$y \fCenter x {\mBRA_{\!\!i}} \ z$
\DisplayProof &
\AX$x \mBAND_{\!i} \   y\fCenter z$

\RightLabel{\scriptsize{$(\mband_i , \mra_{\!\!i})$}}
\doubleLine
\UI$y \fCenter x {\mRA_{\!\!i}} \ z$

\DisplayProof \\
\end{tabular}
\end{center}

\noindent For  $i = 1$, we also have

\begin{center}
\begin{tabular}{c c}
\AX$x \mAND_{\! 1} \  y\fCenter z$
\LeftLabel{\scriptsize{$(\mand_1 , \bla_{\!\!1})$}}
\doubleLine
\UI$x \fCenter z {\mBLA_{\!\!1}} \ y$
\DisplayProof &
\AX$x \mBAND_{\! 1} \  y\fCenter z$

\RightLabel{\scriptsize{$(\mband_{\! 1} \  , \mla_{\!\!1})$}}
\doubleLine
\UI$x \fCenter z {\mLA_{\!\!1}} \ y$

\DisplayProof \\
\end{tabular}
\end{center}

\noindent The display postulates above involve structural connectives each of which has a semantic interpretation. In the following display postulates, the squiggly arrows are not semantically justified: they are the \textit{virtual adjoints}, informally introduced at the beginning of the present Section \ref{virtual adjoints}, which  will be discussed in detail in Section \ref{sec : safe virtual adjoints}.
For each $i = 0,2,3$, we have:

\begin{center}
\begin{tabular}{c c}
\AX$x \mAND_{\!i} \  y \fCenter z$
\LeftLabel{\scriptsize{$(\mand_i , \msbla_{\!\!i})$}}
\doubleLine
\UI$x \fCenter z {\mSBLA_{\!\!i}} \ y$
\DisplayProof &
\AX$x \mBAND_{\!i} \   y\fCenter z$

\RightLabel{\scriptsize{$(\mband_i , \msla_{\!\!i})$}}
\doubleLine
\UI$x \fCenter z {\mSLA_{\!\!i}} \ y$

\DisplayProof \\
\end{tabular}
\end{center}
and for $i = 3$,

\begin{center}
\begin{tabular}{c c}
\AX$x \mAND_{\! 3} \  y\fCenter z$
\LeftLabel{\scriptsize{$(\mand_3 , \msbra_{\!\!3})$}}
\doubleLine
\UI$y \fCenter x {\mSBRA_{\!\!3}} \ z$
\DisplayProof &
\AX$x \mBAND_{\! 3} \  y\fCenter z$

\RightLabel{\scriptsize{$(\mband_{\! 3} \  , \msra_{\!\!3})$}}
\doubleLine
\UI$y \fCenter x {\mSRA_{\!\!3}} \ z$

\DisplayProof \\
\end{tabular}
\end{center}

Notice that sequents occurring in each display postulate involving heterogeneous connectives are {\em not} of the same type. However, it is easy to see that the display postulates preserve the type-uniformity (cf.\ Definition \ref{def:type-uniformity}); that is, if the premise of any instance of a display postulate is a type-uniform sequent, then so is its conclusion.
Next, the {\em necessitation}, {\em conjugation}, {\em Fischer Servi}, and {\em monotonicity} rules: for $0\leq i \leq 2$, 

\begin{center}
\begin{tabular}{c c}

\AX$\textrm{I}\fCenter W$
\LeftLabel{\scriptsize{$({nec_i} \mand)$}}

\UI$ x \mAND_{\!i} \   \textrm{I} \fCenter W$
\DisplayProof &
\AX$W \fCenter \textrm{I} $
\RightLabel{\scriptsize{$({nec_i} \mra)$}}

\UI$ W \fCenter x \mRA_{\!\!i} \ \textrm{I}$
\DisplayProof \\

\\
\AX$\textrm{I}\fCenter W$
\LeftLabel{\scriptsize{$({nec_i} \mband)$}}

\UI$x \mBAND_{\!i} \   \textrm{I} \fCenter W$
\DisplayProof &
\AX$W \fCenter \textrm{I} $
\RightLabel{\scriptsize{$({nec_i} \mbra)$}}

\UI$W \fCenter x \mBRA_{\!\!i} \ \textrm{I}$
\DisplayProof \\

 \\

\AX$x \mAND_{\!\!i\,} ((x \mBAND_{\!\!i\,} Y) \,; Z)\fCenter W$
\LeftLabel{\scriptsize{$({conj_i} \mand)$}}
\UI$Y\ ; (x\mAND_{\!\!i\,} Z) \fCenter W$
\DisplayProof
&
\AX$W \fCenter x \mRA_{\!\!i\,} ((x \mBRA_{\!\!i\,} Y) \,; Z)$
\RightLabel{\scriptsize{$({conj_i} \mra)$}}
\UI$W \fCenter  Y\ ; (x\mRA_{\!\!i\,} Z) $
\DisplayProof
\\

\\

\AX$x \mBAND_{\!\!i\,} ((x \mAND_{\!\!i\,} Y) \,; Z) \fCenter W$
\LeftLabel{\scriptsize{$({conj_i} \mband)$}}
\UI$Y\ ; (x\mBAND_{\!\!i\,} Z) \fCenter W$
\DisplayProof
 &
\AX$W \fCenter  x \mBRA_{\!\!i\,} ((x \mRA_{\!\!i\,} Y) \,; Z) $
\RightLabel{\scriptsize{$({conj_i} \mbra)$}}
\UI$W \fCenter  Y\ ; (x\mBRA_{\!\!i\,} Z) $
\DisplayProof \\

\\

\AX$ ( x \mRA_{\!\! i} \ Y)> (x \mAND_{\! i} \  Z)  \fCenter W$
\LeftLabel{\scriptsize{$({FS_{\! i}}\mand)$}}
\UI$ x \mAND_{\! i} \  (Y > Z)  \fCenter W$
\DisplayProof
&

\AX$W \fCenter ( x \mAND_{\! i} \  Y)> (x \mRA_{\!\! i} \  Z)  $
\RightLabel{\scriptsize{$({FS_{\! i}}\mra)$}}

\UI$W \fCenter x \mRA_{\!\! i} \  (Y > Z)$
\DisplayProof
\\

\\
\AX$( x \mBRA_{\!\! i} \  Y)> (x \mBAND_{\! i} \  Z)  \fCenter W $
\LeftLabel{\scriptsize{$({FS_{\! i}}\mband)$}}

\UI$ x \mBAND_{\! i} \  (Y > Z)  \fCenter W$
\DisplayProof &
\AX$W\fCenter ( x \mBAND_{\! i} \  Y)> (x \mBRA_{\!\! i} \  Z)  $
\RightLabel{\scriptsize{$({FS_{\! i}}\mbra)$}}
\UI$W \fCenter x \mBRA_{\!\! i} \  (Y > Z)$
\DisplayProof
\\

\\
\AX$( x \mAND_{\! i} \  Y) \ ; (x \mAND_{\! i} \  Z)  \fCenter W $
\LeftLabel{\scriptsize{$({mon_i}\mand)$}}

\UI$ x \mAND_{\! i} \  (Y \  ; Z)  \fCenter W$
\DisplayProof &
\AX$W\fCenter ( x \mRA_{\!\! i} \  Y)  \  ; (x \mRA_{\!\! i} \  Z)  $
\RightLabel{\scriptsize{$({mon_i}\mra)$}}
\UI$W \fCenter x \mRA_{\!\! i} \  (Y \  ; Z)$
\DisplayProof
\\

\\
\AX$ ( x \mBAND_{\! i} \  Y) \  ; (x \mBAND_{\! i} \  Z)  \fCenter W $
\LeftLabel{\scriptsize{$({mon_i}\mband)$}}

\UI$ x \mBAND_{\! i} \  (Y \  ; Z)  \fCenter W$
\DisplayProof &
\AX$ W \fCenter   ( x \mBRA_{\!\! i} \  Y) \  ; (x \mBRA_{\!\! i} \  Z)  $
\RightLabel{\scriptsize{$({mon_i}\mbra)$}}
\UI$W \fCenter  x \mBRA_{\!\! i} \  (Y \  ; Z)  $
\DisplayProof
\end{tabular}
\end{center}


\noindent Next, we introduce the rules translating the interaction axioms between dynamic and epistemic modalities. In what follows we omit the subscripts, since the reading is unambiguous. 

\begin{center}
\begin{tabular}{cc}

\AX$ (\agA \mBAND F) \mBAND (\agA \mBAND X) \fCenter Y$
\LeftLabel{\scriptsize{swap-out$_L$}}
\UI$ \agA \mBAND(F \mBAND X ) \fCenter Y$
\DisplayProof
&

\AX$X\fCenter (\agA \mBAND F) \mBRA (\agA \mBRA Y)$
\RightLabel{\scriptsize{swap-out$_R$}}
\UI$X \fCenter \agA \mBRA (F \mBRA Y)$
\DisplayProof

\\

\\
\AX$ \agA \mBAND(F \mBAND X ) \fCenter Y$
\LeftLabel{\scriptsize{swap-in$_L$}}
\UI$  (\agA \mBAND F) \mBAND (\agA \mBAND ((F \mAND \textrm{I} )\ ;   X)) \fCenter Y$

\DisplayProof

&

\AX$X \fCenter \agA \mBRA (F \mBRA Y)$
\RightLabel{\scriptsize{swap-in$_R$}}
\UI$X\fCenter (\agA \mBAND F) \mBRA (\agA \mBRA ((F \mAND \textrm{I} ) > Y))$

\DisplayProof
\\

\end{tabular}
\end{center}

\noindent The structure  $(\agA \mBAND F)$ in the \emph{swap}-rules above has absorbed the labels $\alpha\aga\beta$ in the corresponding \emph{swap}-rules of D'.EAK. Moreover,  new {\em swap-out} rules  are unary, whereas the corresponding ones in D'.EAK are of a non-fixed arity.

The following \emph{atom} axiom translates the \emph{atom} axiom of D'.EAK:
\[
F_1 \circ (F_2 \circ \cdots (F_n \circ p) \cdots)\vdash G_1 {\rhd} (G_2 {\rhd} \cdots (G_m {\rhd} p)\cdots)
\]\label{page:atom}
where  $F_1,\ldots, F_n, G_1,\ldots, G_m\in \mathsf{FNC}$, $\circ \in \{\mAND_{\! 0} \ , \mBAND_{\! 0} \ \}$, ${\rhd} \in \{\mRA_{\!\! 0} \ , \mBRA_{\!\! 0} \ \}$ and $n, m\in \mathbb{N}$. In what follows, we sometimes indicate the {\em atom} axiom with the shorter symbol $\Phi p\vdash \Psi p$.
Notice the following difference between  the present {\em atom} axiom and the one of D'.EAK (cf.\ \cite{GAV}): the structural variables $F$s and $G$s (which are typically instantiated as operational variables $\alpha$ and $\beta$ of type $\mathsf{Fnc}$)  translate what in the {\em atom} axiom of D'.EAK were indexes for logical connectives, whereas in the Dynamic Calculus, the operational variables contained in any instantiation of the $F$s and $G$s are first-class citizens, on the same ground as the operational variable $p$ of type $\mathsf{Fm}$. Hence we need to stipulate whether the introduction of each of these variables is parametric or not. As is customary in the literature on display calculi (cf.\ \cite[Definition 4.1]{Belnap}), we stipulate that the only principal variables in {\em atom} are the $p$s, and all the other variable occurrences  are parametric. 

\noindent Finally, the following {\em balance} rule:\label{page:balance}
\[
\AX$ X\fCenter Y$
\UI$ F \mAND_{\! 0} \  X\fCenter F \mRA_{\!\! 0} \  Y$
\DisplayProof
\]

\noindent is sound only for  $F\in \mathsf{FNC}$, and cannot be extended to an arbitrary actions.\footnote{To see this, notice that  this rule instantiates to \[
\AX$ p\fCenter p$
\UI$ F \mAND_{\! 0} \  p\fCenter F \mRA_{\!\! 0} \  p$
\DisplayProof
\] If the rule \emph{balance} is to be sound, the validity of the premise implies the validity of the conclusion, which is the translation of the sequent  $\langle F\rangle p\vdash [F]p$, which is equivalent to the axiom $\langle F\rangle p\rightarrow [F]p$. It is a well known fact from Sahlqvist theory that the latter axiom corresponds to the condition that the binary relation associated with $\langle F\rangle$ and $[F]$ is the graph of a partial function.} In this rule, every variable occurrence is parametric, and each occurrence of $F$ is only congruent to itself. 

\paragraph{Justifying the two types of actions.}\label{page:two types}
As discussed in the introduction, one of the initial aims of the present paper was introducing a formal framework expressive enough so as to capture at the object-level the information encoded in the meta-linguistic label $\alpha\aga\beta$.
From the order-theoretic analysis at the beginning of the present section, it emerged that the additional expressivity encoded in the connective $\mband_{\! 3} $ and its interpretation \eqref{def: mband3} requires a semantic environment which cannot  be restricted to functional actions. The introduction of the general type $\mathsf{Act}$ serves this purpose.  However, the fact that the rule \emph{balance} is only sound for functional actions is the reason why both types $\mathsf{Fnc}$ and $\mathsf{Act}$ are needed in order for the Dynamic Calculus to satisfy conditions C'$_6$ and C'$_7$ of Section \ref{sec:quasi}. Indeed, the distinct type $\mathsf{Fnc}$ allows for the  rule \emph{balance} to be formulated so that all parametric variables occur unrestricted within each type.



\section{Soundness}
\label{sec:soundness}
    
\newcommand{\lsem}{\mathopen{[\![}}
\newcommand{\rsem}{\mathclose{]\!]}}
\newcommand{\sem}[1]{\lsem #1 \rsem}
\newcommand{\RESbetaBox}{\,\rotatebox[origin=c]{90}{$[\rotatebox[origin=c]{-90}{$\beta$}]$}\,}
\newcommand{\RESbetaDia}{\,\rotatebox[origin=c]{90}{$\langle\rotatebox[origin=c]{-90}{$\beta$}\rangle$}\,}


In the present section, we discuss the soundness of the rules of the Dynamic Calculus and  prove that those which do not involve virtual adjoints (cf.\ Section \ref{virtual adjoints})
are sound with respect to the final coalgebra semantics.
In \cite[Section 5]{GAV}, basic facts about the final coalgebra have been collected and it is explained in detail how the rules of display calculi are to be interpreted in the final coalgebra.
Here we will briefly recall some basics, and refer the reader to \cite[Section 5]{GAV} for a complete discussion.


Structures will be translated into operational terms of the appropriate type, and operational terms  will be interpreted
according to their type. Specifically,  each atomic proposition $p$ is assigned to a subset $\sem{p}$ of the final coalgebra $Z$, each agent  $\aga$  a binary relation $\aga_Z = \sem{\aga}$ on $Z$
representing as usual $\aga$'s uncertainty about the world, and
each functional actions $\alpha$ is assigned  a functional (i.e.\ deterministic) relation $\alpha_Z = \sem{\alpha}\subseteq Z\times Z$ subject to the restriction defining the specific feature of epistemic actions, namely, that for all $z, z'\in Z$, if $z \alpha_Z z'$, then $z\in \sem{p}$ iff $z'\in \sem{p}$ for every atomic proposition $p$.

Further, each agent $\aga$ is associated with an auxiliary binary relation $\aga_\mathsf{Fnc}$ on the domain of interpretation of $\mathsf{Fnc}$, which is the collection of graphs of partial functions having subsets of $Z$ as domain and range.  For each agent $\aga$, the relation $\aga_\mathsf{Fnc}$ represents $\aga$'s uncertainty about which action takes place).

In order to translate structures as operational terms, structural connectives need to be translated as logical connectives. To this effect,
non-modal structural connectives are associated with pairs of logical connectives, and any given occurrence of a structural connective is translated as one or the other, according to
its (antecedent or succedent) position.
The following table illustrates how to translate each propositional structural connective of type $\mathsf{FM}$, in the upper row, into one or the other of the logical connectives corresponding to it on the lower row: the one on the left-hand (resp.\ right-hand) side, if the structural connective occurs in precedent (resp.\ succedent) position.

%
\begin{center}
\begin{tabular}{|r|c|c|c|c|c|c|c|c|c}
 \hline
 \scriptsize{Structural symbols} & \mc{2}{c|}{$<$}   & \mc{2}{c|}{$>$} & \mc{2}{c|}{$;$} & \mc{2}{c|}{I}    \\
 \hline
 \scriptsize{Operational symbols} & $\pdla$ & $\leftarrow$ & $\pdra$ & $\rightarrow$  & $\wedge$ & $\vee$  & $\top$ & $\bot$    \\
\hline
\end{tabular}
\end{center}
Recall that, in the Boolean setting treated here, the  connectives $\pdla$ and $\pdra$ are interpreted as $A\pdla B : = A\wedge \neg B$ and $A\pdra B : = \neg A\wedge B$.

\vspace{5px}
The soundness of structural and operational rules which only involve active components of type $\mathsf{FM}$
has been discussed in \cite{GAV} and is here therefore omitted.

As to the heterogeneous connectives, their translation into the corresponding operational connectives is indicated in the table below,
to be understood similarly to the one above,
where the index $i$ ranges over $\{0,1,2,3\}$ for the triangles and over $\{0,1,2\}$ for the arrows.
%
\begin{center}
\begin{tabular}{| c|c|c | c | c|c|c|c|c|}
\hline
\footnotesize{Structural symbols}
&\mc{2}{c|}{$\mAND_{\! i}$}&\mc{2}{c|}{ $\mBAND_{\! i}$}
&\mc{2}{c|}{$\mRA_{\!\! i}$}&\mc{2}{c|}{ $\mBRA_{\!\! i} $}\\
\hline
\footnotesize{Operational symbols} &$\mand_i$&\phantom{$\mand_i$}
& $\mband_{\! i}$&\phantom{$\mand_{\! i}$}
&\phantom{$\mra_{\!\! i}$}&$\mra_{\!\! i}$
&\phantom{$\mra_{\!\! i}$}& $\mbra_{\!\! i} $\\
\hline
\end{tabular}
\end{center}
%


The interpretation of the heterogeneous connectives involving formulas and agents corresponds to that of the well-known forward and backward modalities discussed in Section \ref{ssec:D'.EAK prelim} (below on the right-hand side we recall the notation of D'.EAK):
\begin{align*}
\sem{\aga \mand_2 A}&=\{z\in Z\mid \exists z' \,.\, z\,\aga_Z\, z' \ \& \ z'\in\sem{A}\}
& \langle\aga\rangle A\\
\sem{\aga \mband_{\! 2} \ A}&=\{z\in Z\mid \exists z \,.\, z'\aga_Z\, z \ \&\  z'\in\sem{A}\}
& \RESagaDia A\\
\sem{\aga \mra_{\!\! 2} \ A}&=\{z\in Z\mid \forall z' \,.\, z\,\aga_Z\, z' \Rightarrow z'\in\sem{A}\}
& [\aga] A \\
\sem{\aga \mbra_{\!\! 2} \ A}&=\{z\in Z\mid \forall z \,.\, z'\aga_Z\, z \Rightarrow z'\in\sem{A}\}
& \RESagaBox A
\end{align*}
The connectives $\mand_0,\mra_{\!\! 0},\mband_{\! 0},\mbra_{\!\! 0}$, involving formulas and functional actions, are interpreted in the same way, replacing the relation $\aga_Z$ with the  deterministic relations $\alpha_Z$. 
From the definitions above,  it immediately follows for any $\alpha\in \mathsf{Fnc}$, we have
 $\sem{\alpha\mand_0 \top} = dom(\alpha_Z)$, where the set
 $$dom(\alpha_Z)  :=\{ z\in Z \mid \exists z'  (z'\in Z \; \& \; z \alpha_Z z') \}$$
 is the \textit{domain} of  $\alpha_Z $.
 
It can also be readily verified that, after having fixed the relations interpreting all $\alpha$s and $\aga$s, the translation of Section \ref{translation DEAK to DC} preserves the semantic interpretation, that is, $\sem{A} = \sem{A'}$ for any D'.EAK formula $A$, where $A'$ denotes the translation of $A$ in the language of the Dynamic Calculus.

The auxiliary relations $\aga_\mathsf{Fnc} = \sem{\aga}^{\mathsf{Fnc}}$ are used to define the interpretations of $\mand_{ 3}$- and $\mband_{\! 3}$-operational terms. Following \ref{def: mband3}, we let
 \begin{eqnarray*}
\sem{\aga \ \mband_{\! 3} \ \alpha} &= &\bigcup\{G \mid \alpha_Z \,\aga_\mathsf{Fnc}\,G \},\\
\sem{\aga \mand_3 \alpha } &=& \bigcup\{G \mid G \,\aga_\mathsf{Fnc}\,\alpha_Z \}.
\end{eqnarray*}
The connectives $\mand_{ 1},\mra_{\!\! 1},\mband_{\! 1},\mbra_{\!\! 1}$, involving  $\mathsf{Act}$-type operational terms $\gamma$, are interpreted in the same way as the $0$- and $2$- indexed connectives, replacing the relation $\aga_Z$ with the interpretation of the appropriate operational term $\gamma$ of type $\mathsf{Act}$. \\
%


The soundness of  all operational rules for  heterogeneous connectives
immediately follows from the fact that their semantic
counterparts as defined above are monotone or antitone
in each coordinate.

The soundness of the rule \emph{balance}  immediately follows from the fact that the functional actions are interpreted as deterministic relations (for more details cf.\ \cite[Section 6.2]{GAV}).

The soundness of the cut-rules follows from the transitivity of the inclusion relation in the domain of interpretation of each type.

The soundness of  the \textit{Atom} axioms is argued similarly to that of the \textit{Atom} axioms of the system D'.EAK, crucially using the fact that epistemic actions do not change the factual states of affairs (cf.\ \cite[Section 6.2]{GAV}).

The display rules $(\mand_i , \mbra_{\!\!i})$ and $(\mband_i , \mra_{\!\!i})$ for $0\leq i\leq 2,$ and
$(\mand_1 , \bla_{\!\!1})$ and $(\mband_{\! 1} , \la_{\!\!1})$
are sound as the semantics of the triangle and arrow connectives form adjoint pairs.

On the other hand, in the display rules $(\mand_3, \msbra_{\!\!3})$,  $(\mband_{\! 3}, \msra_{\!\!3})$,   $(\mand_i , \msbla_{\!\!i})$ and $(\mband_i , \msla_{\!\!i})$ for $i = 0,2,3$, the arrow-connectives are what we  call \emph{virtual adjoints} (cf.\ Section \ref{DC}), that is, they do not have a semantic interpretation.
In the next section, we will account for the fact that their presence in the calculus is safe.

Soundness of  necessitation, conjugation, Fischer Servi, and monotonicity rules is straightforward and proved as in \cite{GAV}.
In the remainder of the section, we discuss the soundness of
the new rules swap-in and swap-out recalled below.

\begin{fact}
\label{fact : mband 1 join preserving first coordinate}
The following defining clause for the interpretation of $\mband_{\! 1}$-operational terms
$$\sem{\gamma \ \mband_{\! 1} \ A}=\{z\in Z\mid \exists z \,.\, z' \gamma_Z \, z \ \&\  z'\in\sem{A}\}
$$
immediately implies that the semantic interpretation of $\mband_{\! 1}$ is completely $\bigcup$-preserving in its first coordinate.
\end{fact}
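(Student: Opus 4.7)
The plan is to derive the claim by directly unfolding the defining clause quoted in the statement. I would fix an arbitrary indexed family $\{\gamma_i\}_{i \in I}$ of $\mathsf{Act}$-type terms, write $R_i := \sem{\gamma_i}$ for the relations on $Z$ interpreting them, and let $A$ be an arbitrary $\mathsf{Fm}$-type formula. Recall from Section \ref{DC} that the domain of interpretation of $\mathsf{Act}$ is the complete $\bigcup$-semilattice generated by the interpretations of $\mathsf{Fnc}$-type terms; in particular, the join of the family $\{\gamma_i\}$ is interpreted as $\bigcup_i R_i$. Hence, verifying complete $\bigcup$-preservation in the first coordinate reduces to establishing
\[
\sem{\big(\bigcup_{i \in I} R_i\big) \mband_{\! 1} A} \;=\; \bigcup_{i \in I} \sem{R_i \mband_{\! 1} A}.
\]

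From this point the argument is a one-step commutation of existential quantifiers. Unfolding the defining clause on the left-hand side, membership of $z$ amounts to the existence of some $z'\in\sem{A}$ with $z'\,(\bigcup_i R_i)\,z$, which by definition of union of relations is equivalent to the existence of some $i\in I$ together with some $z'\in\sem{A}$ such that $z'\,R_i\,z$. Swapping the two existential quantifiers and re-applying the defining clause to each of the resulting disjuncts yields exactly the right-hand side.

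I do not expect any genuine obstacle: beyond the defining clause itself, the only ingredient is the standard set-theoretic fact that existential quantification distributes over arbitrary disjunctions, equivalently, that $\bigcup$ distributes over itself in $\mathcal{P}(Z)$. This is precisely why the statement is flagged as an immediate consequence in the text, and why the same manipulation simultaneously shows that $\mband_{\! 0}$ and $\mband_{\! 2}$ are completely $\bigcup$-preserving in their first coordinate, should this be needed elsewhere in the soundness argument.
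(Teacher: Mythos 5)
Your proposal is correct and follows essentially the same route as the paper's own (one-line) proof: both reduce the claim to the observation that $z'\,(\bigcup_i R_i)\,z$ holds iff $z'\,R_i\,z$ for some $i$, after which the existential quantifiers commute. Your version merely spells out the quantifier manipulation that the paper leaves implicit.
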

\begin{proof}
If $\gamma_Z =\bigcup_{i\in I} \beta_i $, then clearly $z'\gamma_Z z$ iff $z' \beta_i z'$ for some $i\in I$.
\end{proof}
As to the soundness of \emph{swap-out}$_L$,
assume that the structures $\agA$, $F$, $X$ and $Y$ have been given the following interpretations, according to their type, as discussed above:
 $\mathbf{a}_Z \subseteq Z\times Z$,  $\mathbf{a}_{\mathsf{Fnc}}  $ is a binary relation on graphs of partial functions on $Z$,  $F_Z$ is a functional relation on $Z$, and  $X_Z, Y_Z \subseteq Z$.
Let
$$ \mathbf{a}_Z \ \mband_{\! 3} \ F_Z \ := \ \bigcup \{ \beta \mid F_Z \mathbf{a}_{\mathsf{Fnc}}  \beta \}.$$

\newcommand{\RESmbandthreeAzFzDia}{\,
$\raisebox{0.8ex}{$
\widehat{\;\;\;\;\;\;\;\;\;\;\;\;}$}
\!\!\!\!\!\!\!\!\!\!\!\!\!\!\!\!\!\!\!\!\!\!\!
\raisebox{-1.3ex}{\rotatebox[origin=c]{-180}{$
\widehat{\;\;\;\;\;\;\;\;\;\;\;\;}$}}
\!\!\!\!\!\!\!\!\!\!\!\!\!\!\!\!\!\!\!\!\!\!
$\mathbf{a}_Z  \blacktriangle_3  F_Z}

\newcommand{\RESmbandthreeAzFzDiatwo}{\,
$\raisebox{0.8ex}{$
\widehat{\;\;\;\;\;\;\;\;\;\;\;\;}$}
\!\!\!\!\!\!\!\!\!\!\!\!\!\!\!\!\!\!\!\!\!\!\!\!
\raisebox{-1.3ex}{\rotatebox[origin=c]{-180}{$
\widehat{\;\;\;\;\;\;\;\;\;\;\;\;}$}}
\!\!\!\!\!\!\!\!\!\!\!\!\!\!\!\!\!\!\!\!\!\!
$\mathbf{a}_Z  \blacktriangle_3  F_Z}

\newcommand{\RESagAzDia}{\,\rotatebox[origin=c]{90}{$\langle \, \rotatebox[origin=c]{-90}{$\mathbf{a}_Z$}\,\rangle$}\,}

\newcommand{\RESFzDia}{\,\rotatebox[origin=c]{90}{$\langle \, \rotatebox[origin=c]{-90}{$F_Z$}\,\rangle$}\,}

\newcommand{\RESGDia}{\,\rotatebox[origin=c]{90}{$\langle \, \rotatebox[origin=c]{-90}{$G$}\,\rangle$}\,}




Assume that the premise of \emph{swap-out}$_L$ is satisfied. That is:
\begin{center}
$\RESmbandthreeAzFzDia \; \RESagAzDia X_Z \subseteq Y_Z ,$
\end{center}
where the symbols $\RESmbandthreeAzFzDiatwo$ and $\RESagAzDia$ denote the semantic diamond operations associated with the converses of the relations
$\mathbf{a}_Z \mband_3 F_Z$ and $\mathbf{a}_Z$ respectively.
Then, the following chain of equivalences holds:



\begin{center}
\begin{tabular}{rclr}
$\RESmbandthreeAzFzDia \; \RESagAzDia X_Z \subseteq Y_Z $
& iff
& $\bigcup\{\RESGDia \RESagAzDia X_Z \mid F_Z \mathbf{a}_\mathsf{Fnc}\, G \} \subseteq Y_Z $
\hfill (fact \ref{fact : mband 1 join preserving first coordinate})
\\
& iff
& $\RESGDia \RESagAzDia X_Z \subseteq Y_Z$
for every $G$ s.t.\ $F_Z \mathbf{a}_\mathsf{Fnc}\,G$
&\\
& iff
& $X_Z \subseteq [ \mathbf{a}_Z ] [ G ]  Y_Z$
for every $G$ s.t.\ $F_Z \mathbf{a}_\mathsf{Fnc}\,G$
&\\
& iff
& $X_Z \subseteq \bigcap \{  [ \mathbf{a}_Z ] [ G ]  Y_Z \mid
F_Z \mathbf{a}_\mathsf{Fnc}\,G \}$
&\\
& hence
&$X_Z \subseteq
(dom(F_Z))^c \; \cup \;
 \bigcap \{  [ \mathbf{a}_Z ] [ G ]  Y_Z \mid
F_Z \mathbf{a}_\mathsf{Fnc}\,G \}$.
\end{tabular}
\end{center}
Consider the new variables $p,q$, $\aga$, $\alpha$, and $\beta_i$
for each $G_i$ such that $ F_Z \mathbf{a}_{\mathsf{Fnc}} G_i$.
Let us stipulate  that
$\sem{p} := X_Z$, $\sem{q}:= Y_Z$,
$\sem{\aga}:= \mathbf{a}_Z$, $\sem{\alpha}:= F_Z$, and
$\sem{\beta_i} := G_i$.
Hence $\sem{Pre(\alpha)} = \sem{\alpha \mand_0 \top}  = dom(F_Z)$. Therefore, the computation above can continue as follows:
\begin{center}
\begin{tabular}{rclr}
\phantom{$\RESmbandthreeDia \; \RESagaDia X_Z \subseteq Y_Z $}
&
& $X_Z \subseteq
(dom(F_Z))^c \; \cup \;
 \bigcap \{  [ \mathbf{a}_Z ] [ G ]  Y_Z \mid
F_Z \mathbf{a}_\mathsf{Fnc}\,G \}$.
&\\
& iff
& $\sem{p} \subseteq \sem{Pre(\alpha) \rightarrow
\bigwedge\{[\aga][\beta] q \mid \alpha\aga\,\beta\}}$
&\\
& iff
& $ \sem{p} \subseteq \sem{[\alpha][\aga] q }$
&\\
& iff
& $X_Z \subseteq [F_Z][\mathbf{a}_Z] Y_Z $
&\\
& iff
& $\RESagAzDia \RESFzDia X_Z \subseteq Y_Z$
\end{tabular}
\end{center}
which completes the proof of the soundness of \emph{swap-out}$_L$.
The proof of the soundness of the remaining swap-rules is similar.



\section{Completeness and cut elimination}
    In \ref{ssec: completeness}, we discuss the  completeness of the Dynamic Calculus w.r.t.\ the final coalgebra semantics. We show that the translation (cf.\ Section \ref{translation DEAK to DC}) of each of the EAK axioms is derivable in the Dynamic Calculus. Our proof is indirect, and relies on the fact that EAK is complete   w.r.t.\ the final coalgebra semantics, and that the translation preserves the semantic interpretation on the final coalgebra (as discussed in Section \ref{sec:soundness}).  In \ref{ssec:belnap-cut}, we show that the Dynamic Calculus is quasi-properly displayable (cf.\ Section \ref{ssec:quasi-def}). By Theorem \ref{thm:meta multi}, this is enough to establish that the calculus enjoys  cut elimination and the subformula property.

\subsection{Derivable rules and  completeness}
\label{ssec: completeness}

In what follows, $\aga$ and $\alpha$ are atomic variables (and also the generic operational terms) of type $\mathsf{Ag}$ and $\mathsf{Fnc}$ respectively,  and $A, B$ are generic operational terms of type $\mathsf{Fm}$.
Since the reading is unambiguous, in the remainder of the present paper the indexes of the heterogeneous connectives are dropped.


\noindent \label{page : rule reduce} Under the stipulations above, the translations of the rules {\em reduce} from D'.EAK (cf.\ Section \ref{ssec:D'.EAK prelim}) can be derived in the Dynamic Calculus as follows.

{\scriptsize{
\[
\AX$\alpha \mAND \textrm{I}\ ; \alpha \mAND A \fCenter X$
\LeftLabel{Dis$_0 \!\mand$}
\UI$\alpha \mAND (\textrm{I}\ ; A)\fCenter X $
\UI$ \textrm{I}\ ; A \fCenter \alpha \mBRA X$
\UI$ \textrm{I}\ \fCenter \alpha \mBRA X < A$
\UI$A \fCenter \alpha \mBRA X $
\UI$ \alpha \mAND A \fCenter X $
\DisplayProof
\]
}
}

\noindent Also  the translations of the {\em comp} rules are derivable in the Dynamic Calculus as follows.

{\scriptsize{
\[
\AX$\alpha \mAND (  \alpha \mBAND X) \fCenter Y$
\UI$ \alpha \mBAND X \fCenter \alpha \mBRA Y$
\UI$ \textrm{I} \fCenter \alpha \mBAND X >  \alpha \mBRA Y  $
\UI$   \alpha \mBAND X\ ; \textrm{I}  \fCenter \alpha \mBRA Y$
\UI$ \alpha \mAND (  \alpha \mBAND X\ ; \textrm{I}) \fCenter  Y$
\LeftLabel{conj$_0 \!\mand$}
\UI$X\ ; \alpha \mAND \textrm{I} \fCenter Y $
\UI$\alpha \mAND \textrm{I}\ ; X \fCenter Y $
\DisplayProof
\]
}
}



\noindent Let us derive the axiom (\ref{DC:axiom1}):

{\scriptsize{
\begin{prooftree}

\AxiomC{$\aga \vdash \aga$}

\AxiomC{$\aga \vdash \aga$}
\AxiomC{$\alpha \vdash \alpha$}
\BinaryInfC{$\aga \mBAND \alpha \vdash \aga \mband \alpha$}

\AxiomC{$A \vdash A$}
\BinaryInfC{$(\aga \mband \alpha) \mra A \vdash (\aga \mBAND \alpha) \mRA A$}

\BinaryInfC{$ \aga \mra ((\aga \mband \alpha) \mra A )\vdash  \aga \mRA ((\aga \mBAND \alpha)  \mRA A)$}
\UnaryInfC{$  \aga \mBAND (\aga \mra ((\aga \mband \alpha) \mra A ))\vdash  ((\aga \mBAND \alpha)  \mRA A)$}
\UnaryInfC{$ ((\aga \mBAND \alpha)  \mBAND ( \aga \mBAND (\aga \mra ((\aga \mband \alpha) \mra A )))\vdash A$}
\LeftLabel{swap-out$_L$}
\UnaryInfC{$\aga \mBAND (\alpha \mBAND (\aga \mra ((\aga \mband \alpha) \mra A ))) \vdash A$}
\UnaryInfC{$ \alpha \mBAND (\aga \mra ((\aga \mband \alpha) \mra A )) \vdash \aga \mRA A$}
\UnaryInfC{$ \alpha \mBAND (\aga \mra ((\aga \mband \alpha) \mra A )) \vdash \aga \mra A$}
\UnaryInfC{$\aga \mra ((\aga \mband \alpha) \mra A ) \vdash  \alpha \mRA (\aga \mra A)$}
\UnaryInfC{$\aga \mra ((\aga \mband \alpha) \mra A ) \vdash  \alpha \mra (\aga \mra A)$}

\end{prooftree}
}
}

\noindent Let us derive the axiom (\ref{DC:axiom2}):
{\scriptsize{
\[
\AxiomC{$\aga \vdash \aga$}
\AxiomC{$A \vdash A$}
\AxiomC{$\aga \vdash \aga$}
\AxiomC{$\alpha \vdash \alpha$}
\BinaryInfC{$\aga \mBAND \alpha \vdash \aga \mband \alpha$}
\BinaryInfC{$(\aga \mBAND \alpha) \mAND A \vdash   (\aga \mband \alpha) \mand A$}
\BinaryInfC{$\aga \mAND ((\aga \mBAND \alpha) \mAND A )\vdash   \aga \mand ((\aga \mband \alpha) \mand A)$}
\UnaryInfC{$(\aga \mBAND \alpha) \mAND A \vdash  (\aga \mBRA ( \aga \mand ((\aga \mband \alpha) \mand A)))$}
\UnaryInfC{$A \vdash (\aga \mBAND \alpha) \mBRA (\aga \mBRA ( \aga \mand ((\aga \mband \alpha) \mand A)))$}
\RightLabel{swap-out$_R$}
\UnaryInfC{$ A \vdash \aga \mBRA  (\alpha \mBRA ( \aga \mand ((\aga \mband \alpha) \mand A))$}
\UnaryInfC{$\aga \mAND A \vdash  \alpha \mBRA ( \aga \mand ((\aga \mband \alpha) \mand A)$}
\UnaryInfC{$\aga \mand A \vdash  \alpha \mBRA ( \aga \mand ((\aga \mband \alpha) \mand A)$}
\UnaryInfC{$ \alpha \mAND (\aga \mand A) \vdash \aga \mand ((\aga \mband \alpha) \mand A)$}
\UnaryInfC{$ \alpha \mand (\aga \mand A) \vdash \aga \mand ((\aga \mband \alpha) \mand A)$}
\DisplayProof
\]
}
}

\noindent A slight difference between the setting of \cite{Dyckhoff} and the present setting is that in that paper only the dynamic boxes are allowed in the object language, even if their propositional base is taken as non classical. In the present setting however, both the dynamic  boxes and diamonds are taken as primitive connectives. When moving to a propositional base which is weaker than the Boolean one,  also the diamond/box  interaction  axioms such as the following one become primitive: $$  [\alpha] \lc \aga \rc A \leftrightarrow 1_\alpha \rightarrow \bigvee\{\lc\aga\rc \lc \beta \rc A \mid \alpha\aga\beta \}.$$

The axiom above translates as: $$  \alpha \mra  (\aga \mand A)  \leftrightarrow  \alpha \mand \top  \rightarrow \aga \mand ((\aga \mband \alpha) \mand A).$$

\begin{tabular}{c}
\footnotesize{
\AX$\alpha \fCenter \alpha$
\AX$\aga \fCenter \aga$
\AX$\aga \fCenter \aga$
\AX$\alpha \fCenter \alpha$
\BI$\aga \mBAND \alpha \fCenter \aga \mband \alpha$
\AX$ A \fCenter A$
\BI$(\aga \mBAND \alpha) \mAND A \fCenter (\aga \mband \alpha) \mand A$
\BI$\aga \mAND ((\aga \mBAND \alpha) \mAND A) \fCenter \aga \mand ((\aga \mband \alpha) \mand A) $
\UI$(\aga \mBAND \alpha) \mAND A \fCenter \aga \mBRA( \aga \mand ((\aga \mband \alpha) \mand A)) $
\UI$ A \fCenter (\aga \mBAND \alpha) \mBRA( \aga \mBRA( \aga \mand ((\aga \mband \alpha) \mand A)) )$
\RightLabel{\scriptsize{swap-out}$_R$}
\UI$A \fCenter \aga  \mBRA ( \alpha \mBRA( \aga \mand ((\aga \mband \alpha) \mand A)) )$
\UI$\aga \mAND A \fCenter  \alpha \mBRA( \aga \mand ((\aga \mband \alpha) \mand A)) $
\UI$\aga \mand A \fCenter  \alpha \mBRA( \aga \mand ((\aga \mband \alpha) \mand A)) $
\BI$\alpha \mra  (\aga \mand A) \fCenter \alpha \mRA (\alpha \mBRA( \aga \mand ((\aga \mband \alpha) \mand A)))$
\UI$\alpha \mBAND ( \alpha \mra  (\aga \mand A)) \fCenter \alpha \mBRA( \aga \mand ((\aga \mband \alpha) \mand A))$
\UI$\textrm{I} \fCenter (\alpha \mBAND ( \alpha \mra  (\aga \mand A))) > (\alpha \mBRA( \aga \mand ((\aga \mband \alpha) \mand A)))$
\UI$ \alpha \mBAND ( \alpha \mra  (\aga \mand A))~; \textrm{I} \fCenter \alpha \mBRA( \aga \mand ((\aga \mband \alpha) \mand A))$
\UI$\alpha \mAND(\alpha \mBAND ( \alpha \mra  (\aga \mand A) )~; \textrm{I}) \fCenter  \aga \mand ((\aga \mband \alpha) \mand A)$
\LeftLabel{\scriptsize{conj}$_{0}\! \mand$}
\UI$ ( \alpha \mra  (\aga \mand A) ) ~; ( \alpha \mAND \textrm{I} ) \fCenter  \aga \mand ((\aga \mband \alpha) \mand A)$
\UI$  \alpha \mAND \textrm{I}  \fCenter ( \alpha \mra  (\aga \mand A) ) >( \aga \mand ((\aga \mband \alpha) \mand A))$
\UI$   \textrm{I}  \fCenter \alpha \mBRA ( \alpha \mra  (\aga \mand A) ) >( \aga \mand ((\aga \mband \alpha) \mand A))$
\UI$   \top  \fCenter \alpha \mBRA ( \alpha \mra  (\aga \mand A) ) >( \aga \mand ((\aga \mband \alpha) \mand A))$
\UI$  \alpha \mAND \top  \fCenter ( \alpha \mra  (\aga \mand A) ) >( \aga \mand ((\aga \mband \alpha) \mand A))$
\UI$  \alpha \mand \top  \fCenter ( \alpha \mra  (\aga \mand A) ) >( \aga \mand ((\aga \mband \alpha) \mand A))$
\UI$  ( \alpha \mra  (\aga \mand A) )~;  \alpha \mand \top  \fCenter  \aga \mand ((\aga \mband \alpha) \mand A)$
\UI$  \alpha \mand \top ~ ; ( \alpha \mra  (\aga \mand A) ) \fCenter  \aga \mand ((\aga \mband \alpha) \mand A)$
\UI$  \alpha \mra  (\aga \mand A)  \fCenter  \alpha \mand \top  >  \aga \mand ((\aga \mband \alpha) \mand A)$
\UI$  \alpha \mra  (\aga \mand A)  \fCenter  \alpha \mand \top  \rightarrow \aga \mand ((\aga \mband \alpha) \mand A)$
\DisplayProof
}
\end{tabular}

\vspace{5px}

For the other direction,
recall that the counterpart of the rule \textit{reduce} is derivable in the Dynamic Calculus \pageref{page : rule reduce}:
\begin{center}
\begin{tabular}{c}
\footnotesize{
\AX$\alpha \fCenter \alpha$
\AX$ \textrm{I} \fCenter \top$
\BI$\alpha \mAND \textrm{I} \fCenter \alpha \mand  \top$

\AX$\aga \fCenter \aga$
\AX$A \fCenter A$
\BI$\aga \mAND A \fCenter \aga \mand A$
\LeftLabel{\scriptsize{balance}}
\UI$\alpha \mAND (\aga \mAND A) \fCenter \alpha \mRA (\aga \mand A) $
\UI$ \aga \mAND A \fCenter \alpha \mBRA (\alpha \mRA (\aga \mand A)) $
\UI$  A \fCenter \aga \mBRA( \alpha \mBRA (\alpha \mRA (\aga \mand A)) )$
\RightLabel{\scriptsize{swap-in$_R$}}
\UI$A \fCenter (\aga \mBAND \alpha) \mBRA (\aga \mBRA (  (\alpha \mAND \textrm{I} ) >  (\alpha \mRA (\aga \mand A))  ) )$

\UI$(\aga \mBAND \alpha) \mAND A  \fCenter \aga \mBRA (  (\alpha \mAND \textrm{I} ) >  (\alpha \mRA (\aga \mand A))  ) $

\UI$\aga \mBAND \alpha \fCenter ( \aga \mBRA (  (\alpha \mAND \textrm{I} ) >  (\alpha \mRA (\aga \mand A))  )) \mBLA  A$
\UI$\aga \mband \alpha \fCenter  ( \aga \mBRA (  (\alpha \mAND \textrm{I} ) >  (\alpha \mRA (\aga \mand A))  )) \mBLA  A$

\UI$(\aga \mband \alpha) \mAND A \fCenter  \aga \mBRA (  (\alpha \mAND \textrm{I} ) >  (\alpha \mRA (\aga \mand A))  ) $

\UI$(\aga \mband \alpha) \mand A \fCenter   \aga \mBRA (  (\alpha \mAND \textrm{I} ) >  (\alpha \mRA (\aga \mand A))  ) $

\UI$  \aga \mAND ((\aga \mband \alpha) \mand A) \fCenter   (\alpha \mAND \textrm{I} ) >  (\alpha \mRA (\aga \mand A))  $

\RightLabel{$reduce_R$}
\UI$ \aga \mAND ((\aga \mband \alpha) \mand A) \fCenter  \alpha \mRA (\aga \mand A)  $

\UI$  \aga \mand ((\aga \mband \alpha) \mand A) \fCenter     \alpha \mRA (\aga \mand A)  $

\BI$ \alpha \mand \top  \rightarrow \aga \mand ((\aga \mband \alpha) \mand A) \fCenter   \alpha \mAND \textrm{I} >  \alpha \mRA (\aga \mand A) $
\RightLabel{$reduce_R$}
\UI$ \alpha \mand \top  \rightarrow \aga \mand ((\aga \mband \alpha) \mand A) \fCenter \alpha \mRA (\aga \mand A )$
\UI$ \alpha \mand \top  \rightarrow \aga \mand ((\aga \mband \alpha) \mand A) \fCenter \alpha \mra (\aga \mand A )$

\DisplayProof
}
\end{tabular}
\end{center}

\vspace{5px}
\noindent The derivations (of the translations) of the remaining axioms 
have been relegated to the appendix.  

\subsection{Belnap-style cut-elimination, and subformula property}\label{ssec:belnap-cut}
In the present subsection, we prove that the Dynamic Calculus for EAK is a quasi-properly displayable calculus (cf.\ Section \ref{ssec:quasi-def}). By Theorem \ref{thm:meta multi}, this is enough to establish that the calculus enjoys the cut elimination and the subformula property.
Conditions C$_1$, C$_2$, C$_4$, C'$_5$, C'$_6$, C'$_7$ and C$_{10}$ are straightforwardly verified by inspecting the rules and are left to the reader.

Condition C''$_5$ can be straightforwardly argued by observing that the only axioms to which a display postulate can be applied are of the {\em atom} form: $\Phi p\vdash \Psi p$. In this case, the only applicable display postulates are those rewriting $\mAND$- or $\mBAND$-structures into $\mBRA$- and $\mRA$-structures and vice versa, which indeed preserve the {\em atom} shape.  Condition C''$_8$ is  straightforwardly verified by inspection on the axioms.
Condition  C'$_2$ can be straightforwardly verified by inspection on the rules, for instance by observing that the domains and codomains of adjoints are rigidly determined.

The following proposition shows that condition C$_9$ is met:
\begin{proposition}
\label{prop:type regular}
Any derivable sequent in the Dynamic Calculus for EAK is type-uniform.
\end{proposition}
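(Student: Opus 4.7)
The plan is to proceed by induction on the height of derivations. For the base case, I would inspect each axiom schema --- $\aga \vdash \aga$, $\alpha\vdash \alpha$, $p\vdash p$, $\bot\vdash \textrm{I}$, $\textrm{I}\vdash \top$, and every instance of the \emph{atom} schema $\Phi p\vdash \Psi p$ --- and verify directly that its two sides share a type: respectively $\mathsf{Ag}$, $\mathsf{Fnc}$, and $\mathsf{Fm}$ for the unary schemas, and $\mathsf{Fm}$ for each \emph{atom} instance, since the grammar of $\mathsf{FM}$ forces every structure obtained by iterating $\mAND_{\! 0}$, $\mBAND_{\! 0}$, $\mRA_{\!\! 0}$ and $\mBRA_{\!\! 0}$ around an $\mathsf{Fm}$-component to stay in $\mathsf{Fm}$.

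For the inductive step, the plan is to argue that every rule schema preserves type-uniformity: if its premise(s) are type-uniform, so is its conclusion. The rules whose active parts all live inside a single type --- the propositional structural and operational rules, the Grishin rules, and the four cut rules (which are strongly type-uniform by condition C$_{10}$) --- do this by direct inspection. For the operational rules associated with heterogeneous connectives, one reads the conclusion type off the typing signature of the connective: for instance, if $x\vdash a$ and $y\vdash b$ are premises of ${\mand_i}_R$, the signature of $\mand_i$ puts both $x \mAND_{\! i} \ y$ and $a\mand_i b$ in its codomain type, so the conclusion is again type-uniform.

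The main step is the case-by-case verification of the heterogeneous display postulates, including those involving the virtual adjoints $\msbla_{\! i}$, $\msla_{\! i}$, $\msbra_{\! 3}$ and $\msra_{\! 3}$. One has to check that the input/output typing of each adjoint pair turns a type-uniform premise into a type-uniform conclusion. For example, in $(\mand_0,\mbra_0)$, a type-uniform premise $x\mAND_{\! 0} \ y\vdash z$ forces $x:\mathsf{Fnc}$, $y:\mathsf{Fm}$ and $z:\mathsf{Fm}$, so the conclusion $y\vdash x\mBRA_{\!\! 0} \ z$ is again of type $\mathsf{Fm}$; in $(\mand_3,\msbra_3)$, a type-uniform premise $x\mAND_{\! 3} \ y\vdash z$ forces $x:\mathsf{Ag}$, $y:\mathsf{Fnc}$ and $z:\mathsf{Act}$, whence by the signature $\msbra_{\! 3}\colon \mathsf{Ag}\times\mathsf{Act}\to \mathsf{Fnc}$ the conclusion $y\vdash x\mSBRA_{\!\! 3} \ z$ has both sides of type $\mathsf{Fnc}$. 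The remaining display postulates, together with the necessitation, conjugation, Fischer-Servi, monotonicity, \emph{balance}, and \emph{swap-in}/\emph{swap-out} rules, are treated identically by reading off the prescribed types from the signatures of the connectives that appear in them.

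The hard part will not be conceptual but notational: because the Dynamic Calculus carries many heterogeneous structural connectives --- including the virtual adjoints that lack a semantic counterpart --- the verification amounts to compiling a complete table of the signatures and then mechanically running through every rule schema. Since the calculus is designed exactly so that the signatures of each adjoint pair match across its display postulate, and every other rule respects the same typing discipline, no genuine obstruction arises.
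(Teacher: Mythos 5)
Your proposal is correct and follows essentially the same route as the paper's own proof: induction on derivation height, with the base case checked on the axioms and the inductive step verified by inspecting that every rule (including the strongly type-uniform cut rules) preserves type-uniformity. You simply spell out the signature-checking that the paper leaves as ``verified by inspection,'' and your sample verifications (e.g.\ for $(\mand_0,\mbra_0)$ and $(\mand_3,\msbra_3)$) are accurate.
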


\begin{proof}
We prove the  proposition by induction on the height of the derivation.
The base case is verified by inspection; indeed,  the  following axioms are type-uniform by definition of their constituents: 

\[\aga \vdash \aga \quad \alpha \vdash \alpha \quad \Phi p\vdash \Psi p  \quad \bot \vdash \textrm{I}  \quad \textrm{I} \vdash \top \quad \   \]

\noindent As to the inductive step, one can verify by inspection that all the rules of the Dynamic  Calculus preserve type-uniformity, 
and that the Cut rules are strongly type-uniform.
\end{proof}

As to condition C'$_3$, all parameters in all but the {\em swap-in} rules satisfy the condition of non-proliferation. In each {\em swap-in} rule, the parameters of type $\mathsf{Ag}$  and $\mathsf{Fnc}$ in the premise are congruent to {\em two} parameters in the conclusion. However, it is not difficult to see that in each derivation, each application of any cut rule
\begin{center}

\AXC{$\vdots$}
\noLine
\UI$x\fCenter a$
\AXC{$\vdots$}
\noLine
\UI$a\fCenter y$
\BI$x\fCenter y$
\DisplayProof
\end{center}
of type $\mathsf{Ag}$  or $\mathsf{Fnc}$ must be such that the structure $x$ reduces to the atomic term $a$. Indeed, because the sequent $x\vdash a$ is derivable, by Proposition \ref{prop:type regular}
it must be type uniform, that is, the structure $x$ needs to be of type $\mathsf{AG}$ if $a$ is, or of type $\mathsf{FNC}$ if $a$ is. If $x$ was not atomic, then its main structural connective would be a squiggly arrow $\mSLA$ or $\mSBLA$. Because these connectives do not have any operational counterpart, such a structure cannot have been introduced by an application of an operational rule. Hence, the only remaining possibility is that it has been introduced via a display postulate. But also this case is impossible, since in display postulates introduce these connectives only in the succedent, and $x$ is in precedent position. This finishes the verification of condition C'$_3$.

\commment{


We observe that, since all sequents are type-regular,  each cut rule will involve sequents of the same type. \marginpar{\raggedright{\tiny{A: this is true only because we are in a special situation, in which the primary types have no intersection; in other situations we might have a cut $\Gamma\vdash A$, $A\vdash \Delta$/ $\Gamma\vdash \Delta$ in which the first premise is of type $T_1$, the second premise is of type $T_2$ and $T_1\subseteq T_2$. This situation is not solved simply by imposing type regularity, but we need to impose that cuts are strongly regular.}}}
The cut rule performed on operational agent-type terms can only have an atomic agent as cut formula; moreover, notice that the left-hand side of any agent-type sequent must consist of an atomic agent; hence, such a cut is of the form illustrated on the left-hand side of the diagram below, and is therefore eliminable in the following way:

\[
\AX$\aga \fCenter \aga$
\AXC{\ \  $\vdots$ \raisebox{1mm}{$\pi_1$}}
\noLine
\UI$\aga \fCenter \texttt{X}$
\BI$\aga \fCenter \texttt{X}$
\DisplayProof
\qquad
\rightsquigarrow
\qquad
\AXC{\ \ $\vdots$ \raisebox{1mm}{$\pi_1$}}
\noLine
\UI$\aga \fCenter \texttt{X}.$
\DisplayProof
\]

The verification of conditions $C_1$-$C_5$ is straightforward. \marginpar{\raggedright{\tiny{A: C5 is false}}} Conditions $C_6$ and $C_7$ are both satisfied; indeed, each rule is closed under uniform substitution of terms of each type, both in the precedent and in succedent position, and moreover, if a structure $z$ is to be substituted for a term $a$ of a given type in, say, antecedent position under the assumption that the sequent $z\vdash a$ is derivable, then, by Proposition \ref{prop:type regular},  $z$ would be of the same type as $a$.

Let us discuss the cut rules performed on operational action-type terms: if the cut formula is an atomic action, then it is functional, hence of the form $\alpha$; then notice that they can be principal and displayed only in sequents of the form $\alpha\vdash\alpha$, and hence the cut is easily eliminable.

As to  Belnap's conditions, the verification of conditions $C_1$-$C_5$ is straightforward. Notice that in all  rules, all parameters occur {\em unrestricted within each type} (notice in particular that the rules {\em swap-in}, {\em swap-out} and {\em atom} now fall under this situation).  By type-regularity, this implies that both conditions $C_6$ and $C_7$ are satisfied.

 Notice also the following difference: in the system D'.EAK, $\alpha$ and $\aga$ are  parameters of the language of D'.EAK, so in that setting there was no need to check that the occurrences of $\alpha$ and $\aga$ in the various rules of that calculus satisfy the same conditions that first-class citizens of the language (e.g.\ structural or operational variables) need to satisfy.   The only rules which need further discussion are \emph{atom}. In these rules, the only parameter which occurs with restriction is the atomic formula $p$. Since the only rule in which an atomic formula is principal is the axiom  $p\vdash p$, it can be readily seen, as discussed in \cite{GAV} \marginpar{\raggedright\tiny{A: provide more specific reference to the discussion, and expand on the discussion in the present setting}} for the case of D'.EAK, that the occurrences of $p$ are {\em cons-regular} when $p$ is in precedent position and {\em ant-regular} when $p$ is in consequent position.

  }
\noindent Finally, the verification steps for C'$_8$ are collected in Section \ref{ssec:cut elimination}.

\section{Conservativity}
\label{sec : safe virtual adjoints}

In the definition of the language of the Dynamic Calculus, we have adopted a rather inclusive policy. That is, the operational language includes almost all the logical symbols which could be assigned a natural interpretation purely on the basis of reasonable assumptions on the order-theoretic properties of the domains of interpretation of the various types of terms, the only exception being the connectives  $\mbla_{\! 1}$ and $\nla_{\! 1}$, which are excluded from the language although they are semantically justified.  A very useful and powerful consequence of the fact that the Dynamic Calculus enjoys cut elimination Belnap-style is that this cut elimination is then inherited by  the subcalculi  corresponding to each fragment of the operational language of the Dynamic Calculus which  verify as they stand the assumptions of Theorem \ref{thm:meta multi}. However, the question is still open about whether these  subcalculi interact with each other in unwanted ways when their proof power is concerned: for any two such fragments $\mathcal{L}_1\subseteq \mathcal{L}_2$, does the subcalculus corresponding to $\mathcal{L}_2$ conservatively extend the one corresponding to $\mathcal{L}_1$? Typically, the absence of unwanted interactions among subcalculi is deduced from having cut elimination, and soundness and completeness w.r.t.\ a given semantics. This way, in \cite{GAV} it is also shown  that the system D'.EAK conservatively extends EAK.

However, this strategy is not immediately applicable to the setting of the Dynamic Calculus, due to the structural symbols referred to as {\em virtual adjoints}, which are easily recognizable, since they are shaped like arrows with a squiggly tail: $\mSBLA$, $\mSRA$ etc. Virtual adjoints   have no semantic justification, and hence, the rules in which they specifically occur (that is, the display postulates relative to  them) cannot be justified on semantic grounds. 
 The reason for including virtual adjoints in the language of the Dynamic Calculus is for it to enjoy the relativized display property, discussed in Section \ref{ssec:DisplayLogic}, 
 which is key to guarantee the crucial condition C'$_8$, requiring the existence of a way to solve the principal stage of the cut elimination theorem (cf.\ Section \ref{principal stage}, see also Section \ref{ssec:cut elimination}).

When discussing virtual adjoints in Section \ref{DC}, we claimed that, since they are only introduced in a derivation by way of display postulates and  do not specifically intervene in any other structural rule, their presence in the calculus does not add unwanted proof power compared to D'.EAK (and hence to EAK). 
This is the sense in which the introduction of the virtual adjoints can be regarded as syntactically sound. 
\commment{
Moreover, the logical language of Dynamic Calculus includes the connectives $\mand_3$,  $\mla_1$ and $\mbla_1$ \marginpar{\raggedright\tiny{make adjustments to accommodate $\mla_1$ and $\mbla_1$ }} which---even if they do have a semantic justification (cf.\ Section \ref{})---are not part of the logical language onto which the original language of D'.EAK maps. Indeed, the fragment of the logical language of the Dynamic Calculus which corresponds under the translation of Section \ref{} to the original language of D'.EAK is given by the following

\begin{definition}
Let the language $\mathcal{L}^{-}$ be the fragment of the logical language of the Dynamic Calculus
defined by the following  induction: 
%
\begin{align*}
 \mathsf{Fm} \ni A:: = \; & p \mid \bot\mid \top \mid A\wedge A\mid A\vee A\mid A \rightarrow A\mid A \pdra A\mid \\
 & \alpha\mand_0 A\mid \alpha\mra_0 A  \mid \gamma\mand_1 A\mid \gamma\mra_1 A  \mid \aga\mand_2 A\mid \aga\mra_2 A \mid\\
&
 \alpha\mband_0 A\mid \alpha\mbra_0 A \mid \gamma\mband_1 A\mid \gamma\mbra_1 A \mid \aga\mband_2 A\mid \aga\mbra_2 A \\
 \mathsf{Fnc} \ni \alpha:: = \; & \alpha \\
\mathsf{Act}\ni \gamma:: =\; & 
\aga\mband_3 \alpha \\ 
 \mathsf{Ag} \ni \aga:: = \;& \aga
\end{align*}
\end{definition}
It is easy to see that for every term in $\mathcal{L}^{-}$ of type $\mathsf{Fm}$  there exists an formula in the operational language of D.'EAK which maps to it under the translation given in Section \ref{}. The language above is supported at the structural level by the following language $\mathsf{L}^{-}$:

\begin{align*}
 \mathsf{FM} \ni X:: = \; & A \mid \textrm{I} \mid X~; X\mid X > X\mid \\
 & F \mAND_0 X\mid F \mRA_0 X \mid \Gamma\mAND_1 X\mid \Gamma\mRA_1 X \mid\agA\mAND_2 X\mid \agA\mRA_2 X \mid\\
 & F \mBAND_0 X\mid F \mBRA_0 X \mid \Gamma\mBAND_1 X\mid \Gamma\mBRA_1 X \mid \agA\mBAND_2 X\mid \agA\mBRA_2 X \\
\mathsf{FNC} \ni F:: = \; & \alpha \\
\mathsf{ACT} \ni \Gamma:: =\; & 
\agA\mBAND_3 F  \mid    X \mLA_1  X \mid X \mBLA_1 X \\
\mathsf{AG} \ni \agA:: =  \; &\aga .
\end{align*}
}
The aim of the present section is to prove this claim. 

\medskip

A general and very powerful method for proving the conservativity of display calculi has been introduced  in \cite{CDGT13,CDGT13ext} for the full intuitionistic linear logic.
This method involves no less than two translations, one from the given display calculus into  an intermediate shallow inference nested sequent calculus, and another one from the intermediate calculus into a deep inference nested sequent calculus. This method is very intricate, requiring the verification of hundreds of cases which account for every possible interaction between the shallow and the deep calculus. The intricacy of this proof was such that the correctness of the results in \cite{CDGT13,CDGT13ext}
 has been established by  formalizing them  in the proof assistant Isabelle/HOL, as reported in
 \cite{DCGT14}.

However, in the present section, a much smoother proof of conservativity is given for the Dynamic Calculus for EAK, which does not rely on any nested sequent calculus. Rather, the proof below relies on very specific and uncommon features of the design of the Dynamic Calculus for EAK. In a sense, the very fact that such a smooth proof is possible   witnesses  how uncommonly well behaved EAK is.   

\commment{
, in the following form:
\begin{prop}
\label{prop: conservativity}
\marginpar{\raggedright\tiny{To choose between writing 'the \red{d}ynamic \red{c}alculus' or 'the \red{D}ynamic \red{C}alculus'. Propagate to the entire paper.}}
Let $A\vdash B$ be an operational sequent of type $\mathsf{Fm}$ in the language of the dynamic calculus, such that $A$ and $B$ are, respectively, images of some D'.EAK-formulas $A'$ and $B'$ under the translation of section \ref{}. Then $A\vdash B$ is derivable in the Dynamic Calculus iff $A'\vdash B'$ is  derivable in D'.EAK.
\end{prop}
\begin{proof}
Every rule in D'.EAK has its translation as a rule in the Dynamic Calculus.\marginpar{\raggedright\tiny{Remark this fact when introducing the rules, and mention that it will be used here.}} Using this fact, it is not difficult to show by induction on the height of prooftrees that any  derivation of $A'\vdash B'$ in D'.EAK can be translated into a derivation of $A\vdash B$ in the dynamic calculus. This gives the right-to-left direction of the statement. We omit the details of this proof. However, the details can be extracted by comparing the prooftrees collected in \cite[Section ??]{GAV}  and those in section \ref{Completeness} of the present paper. The converse and more interesting direction is stated and proved in Corollary \ref{cor: interesting direction} below.
\end{proof}
In particular, given that the calculus D'.EAK is sound and complete w.r.t.\ the final coalgebra semantics and conservatively extends EAK, from the proposition above it immediately follows that:

\begin{cor}
Let $A$ be a formula of type $\mathsf{Fm}$ which is image of some EAK-formula  $A'$.  Then  $\varnothing\vdash A$ is derivable in the dynamic calculus iff $A'$ is a theorem in EAK.
\end{cor}

The remainder of the present section aims at completing the proof of Proposition \ref{prop: conservativity}.
}
\begin{definition}
A sequent $x\vdash y$ is \emph{severe} if in the generation trees of either $x$ or $y$ there are occurrences of structural connectives to which no display postulates can be applied. Such occurrences will be referred to as \emph{severe}.
\end{definition}
Clearly, the definition above makes sense only in the context of calculi which, as is the case of the Dynamic Calculus, do not enjoy the full  display property (cf.\ Definition \ref{def: display prop}).
It can be easily verified that, in the specific case of the Dynamic Calculus, there are only two types of severe occurrences:   triangle-type connectives rooting a structure in succedent position, and arrow-type connectives rooting a structure in precedent position.
Examples of  severe sequents then are $(\aga \mRA \alpha)\mAND A\vdash B$ and  $A\vdash \aga\mAND B$. 

\begin{lemma}
\label{lemma: severe preservation}
Any rule in the Dynamic Calculus preserves the severity of sequents. That is, if a rule is applied to a severe sequent, the conclusion of that rule application is also  severe.
\end{lemma}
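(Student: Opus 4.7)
The plan is to argue, by inspection of all rule schemata, that every severe occurrence in any premise gives rise to a severe occurrence in the conclusion. The key consists of two observations. First, by construction a severe occurrence is the root of a substructure to which no display postulate applies; concretely, it is a triangle-type connective in succedent position or an arrow-type (including squiggly) connective in precedent position. Therefore no severe occurrence can ever be the principal structural connective of a rule application. Indeed, display postulates act precisely on triangles in precedent and on arrows in succedent; operational rules introduce their operational connective at a displayed, hence non-severe, position; and the remaining structural rules (contraction, weakening, exchange, associativity, $\textrm{I}$-rules, necessitation, conjugation, Fischer Servi, monotonicity, swap-in, swap-out, balance, together with the propositional display postulates) each act on specific non-severe configurations of structural connectives. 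Hence every severe occurrence in a premise is necessarily a parametric subpart.

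Second, I would establish the following uniformity principle: every rule of the Dynamic Calculus preserves the position (precedent or succedent) of each substructure that appears both in a premise and in the conclusion. For all rules other than display postulates this is immediate from the shape of the rule, since parametric substructures are transplanted from premise to conclusion, possibly with duplication, with their positions in the ambient parametric context unchanged. For display postulates it follows from the fact that they are modelled on adjunction and residuation, which by definition preserves the interpretation, and hence the position, of every substructure; equivalently, one computes the polarities of the coordinates of each heterogeneous structural connective, including the squiggly virtual adjoints $\mSBLA$, $\mSLA$, $\mSBRA$, $\mSRA$, whose polarities are pinned down by the shape of the postulates introducing them, and verifies that they align with each rewriting. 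Combining these two observations, every severe occurrence in a premise also occurs, at the same position, in the conclusion, so the conclusion is severe.

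The main obstacle will be the routine but substantial bookkeeping: computing the polarities of the coordinates of each heterogeneous and squiggly structural connective and verifying, for each of the many display postulates and for the swap-in, swap-out, conjugation, Fischer Servi, monotonicity, necessitation and balance rules, that the positions of parametric substructures line up. Conceptually, however, the argument is straightforward, and reflects that the notion of severe occurrence was tailored precisely to isolate what lies outside the reach of the primitive rules of the calculus.
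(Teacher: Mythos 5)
Your proof is correct, and it takes essentially the same route as the paper: the paper's own proof is literally the one-line ``By inspection on the rules,'' and your two organizing observations (severe occurrences are never active in any rule, and every rule -- display postulates included, by their adjunction-based polarities -- preserves the precedent/succedent position of parametric substructures) are a sound and explicit way of carrying out exactly that inspection.
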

\begin{proof}
By inspection on the rules.
\end{proof}
\begin{fact}
\label{fact: shape sequents ag fnc}
Let $x\vdash y$ be a sequent  of type $\mathsf{AG}$ or $\mathsf{FNC}$, in the full language of Dynamic Calculus, which is derivable by means of a derivation   $\pi$ in which  no application of Weakening, Necessitation, Balance or Atom introduce occurrences of virtual adjoints. Then $x =  a$ for some operational term $a$ of the appropriate type.
\end{fact}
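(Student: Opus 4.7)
The plan is to prove, by induction on $\pi$, the following strengthened invariant: in every sequent occurring in $\pi$, each substructure of type $\mathsf{AG}$ or $\mathsf{FNC}$ is either (i) an atomic operational term, or (ii) a virtual-adjoint structure, i.e.\ one rooted at one of $\mSLA_0, \mSBLA_0, \mSLA_2, \mSBLA_2, \mSLA_3, \mSBLA_3, \mSRA_3, \mSBRA_3$, that constitutes the \emph{entire} succedent of the sequent in which it appears. Inspection of the structural grammars of Section \ref{DC} shows that non-atomic $\mathsf{AG}$- and $\mathsf{FNC}$-structures are precisely the virtual-adjoint ones, so (i) and (ii) are jointly exhaustive. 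Once the invariant is established, the fact follows at once: if $x\vdash y$ is of type $\mathsf{AG}$ or $\mathsf{FNC}$, then the precedent $x$ cannot occupy the entire-succedent slot required by case (ii), hence falls under case (i) and is atomic.

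The base case is immediate, since the axioms contain only atomic $\mathsf{AG}$- and $\mathsf{FNC}$-subterms (the atom axioms comply by the tameness hypothesis on Atom). For the inductive step, the rules split naturally into three groups. First, the propositional structural and operational rules, together with the non-virtual-adjoint heterogeneous display postulates $(\mand_i,\mbra_i)$, $(\mband_i,\mra_i)$ for $i\in\{0,1,2\}$, and $(\mand_1,\mbla_1)$, $(\mband_1,\mla_1)$, introduce no squiggly arrows and merely permute or copy $\mathsf{AG}$- and $\mathsf{FNC}$-parameters already constrained by the inductive hypothesis to be atomic. Second, the rules Weakening, Necessitation, Balance and Atom preserve the invariant directly through the tameness hypothesis, which by assumption forbids them from introducing virtual adjoints. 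Third, the heterogeneous structural rules (conjugation, Fischer--Servi, monotonicity, swap-in, swap-out) only duplicate or recombine $\mathsf{AG}$- and $\mathsf{FNC}$-parameters already known to be atomic, and the strongly type-uniform cut rules of types $\mathsf{AG}$ and $\mathsf{FNC}$ cut on atomic operational terms.

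The main step is the virtual-adjoint display postulates $(\mand_i,\msbla_i)$, $(\mband_i,\msla_i)$ for $i\in\{0,2,3\}$, together with $(\mand_3,\msbra_3)$ and $(\mband_3,\msra_3)$. Take $(\mand_0,\msbla_0)$ as representative: its downward direction rewrites the $\mathsf{FM}$-premise $F\mAND_0 Y\vdash Z$ into the $\mathsf{FNC}$-conclusion $F\vdash Z\mSBLA_0 Y$. In the premise, $F$ is an $\mathsf{FNC}$-subterm embedded inside the precedent of an $\mathsf{FM}$-sequent, hence it cannot occupy the entire-succedent slot required by case (ii); the inductive hypothesis then forces $F$ to be atomic, so the conclusion has atomic precedent (case (i)) while $Z\mSBLA_0 Y$ legitimately occupies the entire-succedent slot (case (ii)). The upward direction dismantles the virtual adjoint, and the inductive hypothesis applied to the $\mathsf{FNC}$-premise again keeps $F$ atomic. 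The remaining virtual-adjoint postulates are entirely analogous; the postulates $(\mand_3,\msbla_3)$ and $(\mand_3,\msbra_3)$ additionally require the observation that the coordinate freshly embedded inside the created virtual adjoint (for instance, $y$ in the conclusion $y\vdash x\mSBRA_3 z$) is already forced atomic by the inductive hypothesis applied to the premise.

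The delicate point is not any single rule but the calibration of the invariant itself: it must be strong enough to be propagated through the virtual-adjoint display postulates, yet weak enough that every other rule preserves it automatically. The tameness hypothesis on Weakening, Necessitation, Balance and Atom, together with the grammatical observation that non-atomic $\mathsf{AG}$- and $\mathsf{FNC}$-structures are exactly the virtual-adjoint structures, is precisely what closes this gap.
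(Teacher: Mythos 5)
Your proposal is correct and rests on exactly the same two observations as the paper's proof: the grammars of $\mathsf{AG}$ and $\mathsf{FNC}$ force any non-atomic structure of those types to be rooted at a virtual adjoint, and under the tameness hypothesis virtual adjoints enter a derivation only through display postulates, which place them in succedent position. Your explicit induction with the ``atomic or entire-succedent'' invariant is simply a more detailed rendering of the argument the paper states in three sentences.
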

\begin{proof}
If $x$ is not an atomic structure, then the grammar of $\mathsf{AG}$ and $\mathsf{FNC}$ prescribes that   $x$ has a virtual adjoint as a main connective. However, the assumptions imply that such structures can be introduced only by way  of  applications of  display postulates, which introduce them in succedent position. Hence, given the assumptions,   there is no way in which such a connective can be introduced in precedent position.
\end{proof}

\begin{lemma}
\label{lemma:virtual free}
 Let $X\vdash Y$ be a sequent   of type $\mathsf{FM}$
which is derivable in the Dynamic Calculus by means of a  derivation   $\pi$ in which  no application of Weakening, Necessitation, Balance  or Atom introduce occurrences of virtual adjoints. Then a derivation $\pi'$ of $X\vdash Y$ exists every node of which (hence the conclusion in particular) is free of virtual adjoints.
\end{lemma}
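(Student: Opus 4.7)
The plan is to proceed by induction on the number of occurrences of virtual adjoints in $\pi$. In the base case, $\pi$ is already virtual-adjoint-free and we set $\pi' = \pi$. For the inductive step, we show that any $\pi$ containing at least one occurrence of a virtual adjoint can be transformed into a derivation $\pi''$ of the same conclusion $X \vdash Y$ with strictly fewer occurrences; iterating then yields the desired $\pi'$. The starting observation, justified by inspection of the rules together with the hypothesis that Weakening, Necessitation, Balance, and Atom do not introduce virtual adjoints, is that virtual adjoints can enter a derivation \emph{only} through the virtual-adjoint display postulates (those involving $\mSBLA_i, \mSLA_i$ for $i=0,2,3$ and $\mSBRA_3, \mSRA_3$), since no other rule has a virtual adjoint as an active connective: operational rules are barred because virtual adjoints have no operational counterpart, and all remaining structural rules operate either on the FM-type semicolon structure (inaccessible to virtual adjoints) or are ruled out by hypothesis.

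For the inductive step proper, pick the uppermost introduction of a virtual adjoint in $\pi$; this is an application of a virtual-adjoint display postulate transforming some sequent $s_1$ (virtual-adjoint-free by the choice of uppermost) to a sequent $s_2$. Trace this virtual adjoint forward through $\pi$: since the conclusion $X \vdash Y$ is virtual-adjoint-free (no rule can generate a virtual adjoint in the FM-type conclusion unless it is already present somewhere above), and since the only rule that can actively act on a virtual adjoint is a display postulate, there must be a first descendant sequent $s_3$ at which the occurrence in question is eliminated via the inverse display postulate, producing a sequent display-equivalent to $s_1$. The intervening rules between $s_2$ and $s_3$ touch the virtual adjoint only parametrically. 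Using condition C'$_7$ (closure under substitution within each type), Lemma \ref{lemma: severe preservation} to control how severity propagates, and crucially Fact \ref{fact: shape sequents ag fnc}---which forces every AG- or FNC-type sequent arising in $\pi$ to have an atomic left-hand side, sharply restricting what intervening rules are possible---we can replace this excursion by an equivalent sequence of rules operating on the virtual-adjoint-free display-equivalent of $s_1$, obtaining $\pi''$.

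The principal obstacle is the case $i=3$: the postulates $(\mand_3, \msbra_3)$ and $(\mband_3, \msra_3)$ display the FNC coordinate of an $\mAND_3, \mBAND_3$ structure in succedent position, and no \emph{non}-virtual display postulate achieves the same effect (since no right adjoint to $\mand_3, \mband_3$ in the first coordinate is assumed to exist semantically). The way around this is again Fact \ref{fact: shape sequents ag fnc}: any FNC-type sequent in $\pi$ has atomic LHS, so the virtual-adjoint-containing succedent produced by such a display postulate can only be consumed by (i) the inverse display postulate, or (ii) a cut on an atomic FNC-term whose two premises can be directly reorganised into a cut on the original ACT-type sequent. Either way, the excursion is removable. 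Summing up, the iteration terminates because each step strictly decreases the number of virtual-adjoint occurrences, producing a derivation $\pi'$ every node of which is virtual-adjoint-free.
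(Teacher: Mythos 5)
Your proposal follows essentially the same route as the paper's proof: virtual adjoints can only enter via their display postulates, they then sit as the main connective of the succedent of a sequent of type $\mathsf{AG}$ or $\mathsf{FNC}$, and Fact \ref{fact: shape sequents ag fnc} (atomic left-hand sides for such sequents) is the decisive tool. The one soft spot is the middle of your inductive step, where you allow for ``intervening rules between $s_2$ and $s_3$ that touch the virtual adjoint only parametrically'' and then claim the whole excursion can be ``replaced by an equivalent sequence of rules'' using C'$_7$ and Lemma \ref{lemma: severe preservation}. As stated this replacement is neither carried out nor needed: since $s_2$ is of type $\mathsf{AG}$ or $\mathsf{FNC}$, every structural rule other than Cut and the display postulates has premises of type $\mathsf{FM}$ and so cannot apply; the heterogeneous operational rules cannot apply because the succedent of $s_2$ is non-atomic; a Cut of the appropriate type is trivial (its left premise $y\vdash a$ forces $y=a$ by Fact \ref{fact: shape sequents ag fnc}, so it reproduces $s_2$ and can be deleted); and the only remaining option is the inverse of the very display postulate that introduced the virtual adjoint. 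Hence the ``excursion'' is always empty, and each introduction/elimination pair is adjacent and simply cancelled --- which is exactly the analysis you give, but only for the case $i=3$; it holds uniformly for all $i$, so singling out $i=3$ as the principal obstacle is a misdiagnosis rather than an error. Your global framing as an induction on the number of virtual-adjoint occurrences is harmless but unnecessary once the local cancellation is established; note also that the virtual-adjoint-freeness of the conclusion $X\vdash Y$ should fall out of this local analysis rather than be assumed when arguing that the traced occurrence must eventually be eliminated.
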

\begin{proof}
Let $s$ be some node/sequent in $\pi$ where the given virtual adjoint has been introduced. Since virtual adjoints in the Dynamic Calculus are all virtual ``right adjoints'', and since, by assumption, they are introduced only by way  of  applications of  display postulates, the given virtual adjoint  is the main connective in the succedent of the sequent $s$. Moreover, virtual adjoints are main connectives of structures of type $\mathsf{AG}$ and $\mathsf{FNC}$. By type uniformity, this implies that the sequent $s$ is either of type $\mathsf{AG}$ or $\mathsf{FNC}$, and therefore $s$ cannot be the conclusion of $\pi$. Some  rule $R$ must exist which takes $s$ as a premise. It can be easily verified by inspection that $R$ cannot coincide with any structural rule  in the Dynamic Calculus which is neither a cut of the appropriate type nor  a display postulate, since all structural rules  different from Cut and  display postulates have premises of type $\mathsf{FM}$. We can also assume w.l.o.g.\ that $R$ is not an application of Cut.
Indeed, by Fact \ref{fact: shape sequents ag fnc},  $s$  is of the form $a\vdash x$, with $a$ being an operational term, and  $x$ being a non-atomic structure by assumption. Hence, if $R$ was a Cut-application, the inference must be of the form
\begin{center}
\AxiomC{$\vdots$}
\noLine
\UnaryInfC{$y\vdash a$}
\AxiomC{$\vdots$}
\noLine
\UnaryInfC{$a\vdash x$}
\BinaryInfC{$y\vdash x$}
\DisplayProof
\end{center}
Because Cut rules in the Dynamic Calculus are strongly type-regular, also $y\vdash a$ would be a (derivable) sequent of type $\mathsf{AG}$ or $\mathsf{FNC}$, hence Fact \ref{fact: shape sequents ag fnc} applies to $y\vdash a$. That is, $y$ must be atomic, and because $y\vdash a$ is derivable,  $y$ must coincide with $a$. Hence, the conclusion of that Cut application is again $s$. This shows that if $R$ was Cut, w.l.o.g.\ we would be able remove that application from the proof tree.
The remaining options are that $R$ coincides with an introduction rule of some heterogeneous logical connective. Recall that, by Fact \ref{fact: shape sequents ag fnc},  $s$  is of the form $a\vdash x$, where $a$ is an operational term. Then, it can be verified by inspection that no heterogeneous rule is applicable if $x$ is not an atomic structure, which is not the case of the sequent $s$, as discussed above. Finally, since the left-hand side of $s$ is atomic, no other display postulates are applicable to $s$ but the converse direction of the same display postulate which had introduced the virtual adjoint and which makes it disappear. Therefore, the refinement $\pi'$ of $\pi$ consists in removing these double and redundant applications of  display postulates.
\end{proof}

\begin{lemma}
\label{lemma: intro severe}
If \textit{inf} is an application of Balance, Atom, Necessitation or Weakening in which some  occurrence of a virtual adjoint is introduced, then the conclusion of \textit{inf} is a severe sequent.
\end{lemma}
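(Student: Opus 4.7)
The plan is a case-by-case analysis over the four rule schemata, built on two observations: every virtual adjoint is an arrow-shaped structural connective (and so becomes severe as soon as it occupies a precedent position), and every non-atomic structure of type $\mathsf{AG}$ or $\mathsf{FNC}$ has, by inspection of the grammars in Section \ref{DC}, a virtual adjoint as its main connective. Together these reduce the lemma to showing that, in each of the four rules, the substructure carrying the freshly introduced virtual adjoint is forced into precedent position in the conclusion, so that either it or some enforced ancestor is severe.

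For Balance, I would track the parametric variable $F$ in the conclusion $F \mAND_{\! 0} \  X \vdash F \mRA_{\!\! 0} \  Y$: on the left $F$ sits at a positive first coordinate of a triangle in precedent, on the right at a negative first coordinate of an arrow in succedent, so both occurrences land in precedent. Since $F$ contains a virtual adjoint it must be non-atomic, hence its main connective is itself a virtual adjoint (an arrow) in precedent, which is severe by definition. The same polarity bookkeeping handles Atom: the nested triangles $\mAND_{\! 0} \ , \mBAND_{\! 0} \ $ on the left carry every $F_i$ down through positive first coordinates that preserve the precedent polarity, and the nested arrows $\mRA_{\!\! 0} \ , \mBRA_{\!\! 0} \ $ on the right successively flip every $G_j$ from succedent into precedent at their negative first coordinate (while keeping the second-coordinate continuation in succedent).

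Necessitation is handled uniformly. The parametric structure $x$ always occurs at the first coordinate of either a triangle in precedent (positive coordinate) or an arrow in succedent (negative coordinate), both of which place $x$ in precedent. When $x$ is of type $\mathsf{AG}$ or $\mathsf{FNC}$, the observation above concludes the case; when $x$ is of type $\mathsf{ACT}$ I would split on its root, using the fact that $\mathsf{ACT}$-structures have no atoms. An $\mLA_{\! 1} \ $- or $\mBLA_{\! 1} \ $-rooted $x$ is itself an arrow in precedent, hence severe, while a $\mAND_{\! 3} \ $- or $\mBAND_{\! 3} \ $-rooted $x$ has two positive coordinates that propagate the precedent polarity unchanged into its $\mathsf{AG}$ or $\mathsf{FNC}$ subterms, where the previous argument delivers severity once a virtual adjoint is reached.

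The genuinely delicate case is Weakening, where the fresh structure $Y$ is of type $\mathsf{FM}$ and the virtual adjoint may be arbitrarily deeply nested. My plan is to prove, by induction on $Y$, the auxiliary statement that any $\mathsf{FM}$-structure (at a fixed position in a sequent) containing a virtual adjoint contains a severe occurrence: the propositional cases $\;;\;,\;>\;,\;<\;$ recurse, carrying polarities correctly, into the coordinate housing the adjoint; the heterogeneous cases $\mAND_{\! i} \ , \mBAND_{\! i} \ , \mRA_{\!\! i} \ , \mBRA_{\!\! i} \ $ either already place the root in a severe orientation (triangle in succedent or arrow in precedent) or route the descent into the first coordinate, which is then in precedent and of type $\mathsf{AG}/\mathsf{FNC}/\mathsf{ACT}$, reducing to the cases above. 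The main obstacle is precisely this bookkeeping: one must verify that every nesting pattern either hits a severe heterogeneous root on the way in or exposes an arrow-type connective in precedent once the descent crosses into the $\mathsf{AG}/\mathsf{FNC}$ layer, with the $\mathsf{ACT}$ layer requiring its own auxiliary induction because its triangular roots ($\mAND_{\! 3} \ , \mBAND_{\! 3} \ $) are non-severe in precedent.
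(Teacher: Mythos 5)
Your proof is correct and takes essentially the same route as the paper's: both rest on the observations that every non-atomic $\mathsf{AG}$- or $\mathsf{FNC}$-structure is rooted in a virtual adjoint (an arrow-type connective, severe in precedent position) and that the polarities of the heterogeneous connectives' coordinates force either that root into precedent position or an enclosing triangle/arrow into a severe orientation. The only difference is organizational: the paper argues bottom-up from the virtual adjoint's immediate ancestor (which must be a heterogeneous connective taking its $\mathsf{AG}$/$\mathsf{FNC}$ argument in a monotone coordinate if a triangle and an antitone coordinate if an arrow), whereas you run a top-down induction on the introduced structure; the polarity bookkeeping verified is the same.
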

\begin{proof}
As to Balance, Atom and Necessitation$_i$ with $i = 0, 2$, notice that each of these rules introduces a structure $x$ of type $\mathsf{FNC}$ or $\mathsf{AG}$  in precedent position. If a virtual adjoint is introduced as a substructure of $x$, then $x$ is non-atomic, and it can be immediately verified by inspecting the syntax of $\mathsf{FNC}$ and $\mathsf{AG}$ that  the main connective of $x$ is an  arrow-type connective, which would then be in precedent position. Hence, the resulting sequent is severe.

As to Weakening and Necessitation$_1$, notice that these rules  introduce structures $x$ of type $\mathsf{FM}$ and $\mathsf{ACT}$ respectively. Recall that virtual adjoints root structures of type $\mathsf{FNC}$ or $\mathsf{AG}$.  Hence, if some virtual adjoint occurs in the generation tree of  $x$,  it cannot occur at the root of $x$. Hence, the virtual adjoint must occur in the scope of some other structural connective. Notice that the heterogeneous connectives are the only ones which can take as argument a structure rooted in a virtual adjoint. We claim that either the virtual adjoint occurs in precedent position (which would be enough to conclude that the conclusion of \textit{inf} is severe), or under the scope of some structural connective to which
no display postulate can be applied.
 Assume that the virtual adjoint occurs in succedent position. If  its immediate ancestor in the generation tree of $X$ is a triangle-type connective, then these connectives are in succedent position too, and hence no display postulate can be applied to them, which makes the conclusion of  \textit{inf} severe, as required.
Similarly, if the immediate ancestor of the virtual adjoint is an arrow-type connective,  then it can be easily checked by inspection that these connectives take structures of type $\mathsf{FNC}$ or $\mathsf{AG}$ exclusively in their antitone coordinate (that is, on the flat side of the arrow). Hence, the arrow-type connective is in precedent position, and hence no display postulate can be applied to it, as required.
\end{proof}

\begin{cor}
\label{cor: interesting direction}
Let $A'\vdash B'$ be a sequent of type $\mathsf{Fm}$ in the language of the Dynamic Calculus, such that $A'$ and $B'$ are, respectively, images of some D'.EAK-formulas $A$ and $B$ under the translation of Section \ref{translation DEAK to DC}. If $A'\vdash B'$ is derivable in the Dynamic Calculus, then $A\vdash B$ is  derivable in D'.EAK.
\end{cor}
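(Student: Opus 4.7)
The plan is to reduce the problem to a semantic argument, bypassing the need for a direct syntactic translation of Dynamic Calculus derivations back into D'.EAK derivations. First I would observe that, since $A'$ and $B'$ are images of D'.EAK-formulas under the translation of Section \ref{translation DEAK to DC}, they are operational terms of type $\mathsf{Fm}$ in which no virtual adjoints occur. Consequently, the sequent $A'\vdash B'$ contains no severe occurrence of a structural connective and is therefore not severe.

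Now I would fix any derivation $\pi$ of $A'\vdash B'$ in the Dynamic Calculus and argue that no application of Weakening, Necessitation, Balance, or Atom in $\pi$ can introduce an occurrence of a virtual adjoint. Indeed, were some such application present, Lemma \ref{lemma: intro severe} would imply that its conclusion is severe, and then Lemma \ref{lemma: severe preservation} would force the end-sequent $A'\vdash B'$ to be severe as well, contradicting the previous observation. Hence the hypothesis of Lemma \ref{lemma:virtual free} is satisfied, and the lemma delivers a derivation $\pi'$ of $A'\vdash B'$ every node of which is free of virtual adjoints.

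The derivation $\pi'$ uses only rules of the Dynamic Calculus that do not involve virtual adjoints, and by the discussion in Section \ref{sec:soundness}, all such rules are sound with respect to the final coalgebra semantics. I would then conclude that $A'\vdash B'$ is semantically valid in the final coalgebra; by the semantic invariance of the translation (also noted in Section \ref{sec:soundness}), the original D'.EAK-sequent $A\vdash B$ is likewise valid. Invoking the completeness of D'.EAK with respect to the final coalgebra semantics, established in \cite{GAV}, one obtains that $A\vdash B$ is derivable in D'.EAK, as required.

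The main obstacle I would anticipate lies not in this high-level argument, which is quite compact once the preparatory lemmas are in place, but in the robustness of the extraction step carried out in Lemma \ref{lemma:virtual free}: the local elimination of a virtual adjoint requires that whenever such a connective is introduced by a display postulate, the rule immediately applied to the resulting sequent must be the converse display postulate. This in turn depends on the inability of non-display rules to act on structures rooted at a virtual adjoint in an antecedent position, a fact that rests on Fact \ref{fact: shape sequents ag fnc} and a careful case analysis of the heterogeneous operational rules and the structural rules available at the types $\mathsf{AG}$ and $\mathsf{FNC}$.
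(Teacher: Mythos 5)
Your argument is correct and follows essentially the same route as the paper's own proof: non-severity of the end-sequent, propagation of severity (Lemma \ref{lemma: severe preservation}) combined with Lemma \ref{lemma: intro severe} to rule out introductions of virtual adjoints, extraction of a virtual-adjoint-free derivation via Lemma \ref{lemma:virtual free}, and then soundness plus completeness of D'.EAK w.r.t.\ the final coalgebra semantics. Your closing remark about the delicacy of the extraction step in Lemma \ref{lemma:virtual free} is apt, but that burden is carried by the lemma itself, not by the corollary.
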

\begin{proof}
 Let $\pi$ be a derivation of $A'\vdash B'$ in the Dynamic Calculus. By assumption, $A'\vdash B'$ is not severe. Hence, no rule application in $\pi$ can introduce severe sequents, since these, by Lemma \ref{lemma: severe preservation}, would then propagate till the conclusion.  Hence in particular,  by Lemma \ref{lemma: intro severe}, in $\pi$ there cannot be any applications of Balance, Atom, Necessitation or Weakening in which some  occurrence of a virtual adjoint is introduced. Therefore, by Lemma \ref{lemma:virtual free}, a derivation $\pi'$ of $A'\vdash B'$ exists in which no virtual adjoints occur. By the results collected in Section \ref{sec:soundness}, the derivation $\pi'$ is sound w.r.t.\ the final coalgebra semantics. Hence  $A'\vdash B'$ is satisfied on the final coalgebra semantics. Since, as discussed in Section \ref{sec:soundness}, $\sem{A} = \sem{A'}$ and $\sem{B} = \sem{B'}$, this implies that $A\vdash B$ is satisfied. Since D'.EAK is complete w.r.t.\ the final coalgebra semantics, a D'.EAK-derivation of  $A\vdash B$ exists.
\end{proof}

\commment{

\begin{fact}
Every rule of the Dynamic Calculus preserves the non-innocence of  sequents in derivations the conclusion of which is of type $\mathsf{FM}$.
\end{fact}
\begin{proof}
If a non-innocent sequent of type $\mathsf{FM}$ is derived by means of a derivation  $\pi$, then, by Lemma \ref{lemma:virtual free}, in $\pi$ there must be some application of Balance, Atom, Necessitation or Weakening. By Fact \ref{lemma: intro severe}, this implies that not only the sequent  is non-innocent but it is severe. Then by lemma \ref{lemma: severe preservation}, the severity is preserved and hence the non-innocence  is simply passed on along the derivation.
\end{proof}
}


\section{Conclusions and further directions}\label{Conclusion}



The present paper is part of a line of research  aimed at developing adequate proof-calculi for dynamic logics. These  logics have proven to be very difficult to treat with standard proof-theoretic tools, due to the very features which characterize them and make them applicable to diverse fields of science, spanning from artificial intelligence to social science and economics. A central desideratum in this line of research  is the development of methods which apply uniformly to different logics, and which allow a smooth transfer of results from one logic to another. The framework of display calculi has successfully met this desideratum for wide classes of logics in the family of  modal and substructural logics. In particular, in the framework of display calculi it is possible to state and prove metatheorems which guarantee any given proof system to enjoy the all-important  cut elimination property, provided it meets  certain conditions on its design.

The main contribution of the present paper is the definition of a display calculus which  smoothly encompasses the most proof-theoretically impervious  features of  Baltag Moss and Solecki's logic of epistemic actions and knowledge. Besides being well performing  (it adequately captures EAK and enjoys Belnap-style cut elimination), this calculus provides an interesting and in our opinion very promising {\em methodological} platform towards the uniform development of a general proof-theoretic account of all dynamic logics, 
and also, from a purely structurally proof-theoretic viewpoint, for clarifying and sharpening the formulation of  criteria leading to the statement and proof of meta-theoretic results such as  Belnap-style  cut-elimination, or conservativity issues.

\paragraph{Seminal approaches.}
\label{divide et impera}
The starting point of this methodology is to introduce enough syntactic devices, both at the operational and at the structural level, so that the parameters indexing logical connectives can be accounted for in the system  as {\em terms} in the language of choice. This gives rise to the definition of multi-type languages, endowed with connectives which manage the interaction of the different types. This  approach appears seminally  in both \cite{Alexandru} and \cite{Dyckhoff}; however, in neither  paper it is fully explored: in \cite{Alexandru} there is no theory of contexts governing the interaction of different types, and in \cite{Dyckhoff}, this interaction is clarified, but only at the metalinguistic level.

\paragraph{Multi-type calculus for PDL.} The multi-type approach has been applied to PDL in \cite{PDL}. In that setting, introducing two separate types for transitive actions and for general actions makes it possible to overcome the big hurdle given by the fact  that the induction axiom features occurrences of the same formula in precedent and in succedent position, which makes it is severely non amenable to the treatment in standard  display calculi. Another interesting case study is given by Parikh's game logic, which is ongoing work \cite{GL}.

\paragraph{Refinements of Belnap's conditions, and type-uniformity.}\label{refinements} The multi-type setting will hopefully prove to be conceptually advantageous to achieve a better grasp and a simpler statement of Wansing's and Belnap's  {\em regularity} requirements (cf.\ conditions $C_6/C_7$ in \cite{Wa98},
\cite[Section 2]{Be2}) for the Belnap-style cut elimination, via the notion of type-uniformity (Definition \ref{def:type-uniformity}). In  \cite{Belnap}, Belnap motivates his condition $C_7$\footnote{Recall that Belnap's condition $C_7$ corresponds to Wansing's  cons-regularity for formulas occurring in precedent position. An analogous explanation holds of course for the ant-regularity condition of formulas in succedent position.} saying that ``rules need not be wholly closed under substitution of structures for congruent formulas which are antecedent parts, but they must be closed enough.'' Then he explains that {\em closed enough} refers to the closure under substitution of formulas $A$ for structures $X$ such that a  derivation is available in the system for the sequent $X\vdash A$, in which the occurrence of $A$ in the conclusion is principal.  The crucial observation is that, even if a system is not defined {\em a priori} as multi-type, it can be regarded as a multi-type setting: indeed, the type of $A$ can be defined as consisting of all the structures $X$ such that the shape of derivation alluded to above exists. Then, condition  $C_6/C_7$  can be equivalently reformulated as the requirement that rules should be closed under uniform substitution within each type. Notice that, under the stipulations above, different types must be separated by at least one structural rule. For instance, in the Dynamic Calculus for EAK, the rule \emph{balance} separates \textsf{Fnc} from \textsf{Act}.
In conclusion, our conjecture is that Wansing's and Belnap's  conditions $C_6/C_7$ boil down to a type-uniformity requirement in a context in which types are not given explicitly. The observations above indicate that type-uniformity is  a desirable design requirement for general dynamic calculi, and in particular for the development of an adequate proof theory for dynamic logics, particularly in view of a uniform path  to  Belnap-style cut-elimination.

\paragraph{Non-proliferation.} Our analysis towards Belnap-style cut elimination led us to refine and weaken various aspects of the cut elimination metatheorem. For instance, the requirement of non-proliferation of parameters for quasi-properly displayable multi-type calculi applies only to types the grammar of which is rich enough that allows non-trivial cut applications, that is, applications of cut  the conclusion of which is different from both premises. The case study of EAK allows such a simple grammar on functional actions and agents  that these two types  are not subject to the restriction of non-proliferation. This in turn makes it possible to include the \emph{swap-in} rules in the calculus, in which every occurring parameter of a type which can proliferate  does indeed proliferate. Introducing  some nontrivial grammar on functional actions (e.g.\ sequential composition) would make the restriction of non-proliferation applicable to this type, and hence would make \emph{swap-in} not suitable anymore.

\paragraph{Expanding the signature.}  Notwithstanding the concerns about \emph{swap-in}, the multi-type language provides the opportunity to consider various natural expansions of the language of actions.  Early on, we argued that the connective ${\mbla}_{\! 1}$ which takes formulas in both coordinate as arguments and delivers an action, has the following natural interpretation: for all formulas $A, B$, the term $B {\mbla}_{\! 1}   A$ denotes the weakest epistemic action $\gamma$ such that, if $A$ was true before $\gamma$ was performed, then $B$ is true after any successful execution of $\gamma$. This connective seems particularly suited to explore epistemic capabilities and planning.

\section{Appendix}\label{Appendix}
    
In the following subsection, we collect the reduction steps verifying that the Dynamic Calculus verifies condition C'$_8$; in Section \ref{Completeness}, we collect the derivations which prove the syntactic completeness of the Dynamic Calculus w.r.t.\ IEAK (cf.\ Section \ref{ssec:IEAK}).

\subsection{Cut elimination}
\label{ssec:cut elimination}
Let us recall that C'$_8$ only concerns applications of the cut rules in which both occurrences of the given cut-term are {\em non parametric}.
Notice that non parametric occurrences of atomic terms of type $\mathsf{Fm}$ involve an axiom on at least  one premise, thus we are reduced to the following cases (the case of the constant $\bot$ is symmetric to the case of $\top$ and is omitted):

\begin{center}
{\footnotesize{
\begin{tabular}{@{}lcrclcr@{}}
\bottomAlignProof
\AX$\Phi p \fCenter p$
\AX$p \fCenter \Psi p$
\BI$\Phi p \fCenter \Psi p$
\DisplayProof
 & $\rightsquigarrow$ &
\bottomAlignProof
\AX$\Phi p \fCenter \Psi p$
\DisplayProof
\ \ \ &\ \ \
\bottomAlignProof
\AX$\textrm{I} \fCenter \top$
\AXC{\ \ $\vdots$ \raisebox{1mm}{$\pi$}}
\noLine
\UI$\textrm{I} \fCenter X$
\UI$\top \fCenter X$
\BI$\textrm{I} \fCenter X$
\DisplayProof
 & $\rightsquigarrow$ &
\bottomAlignProof
\AXC{\ \ $\vdots$ \raisebox{1mm}{$\pi$}}
\noLine
\UI$\textrm{I} \fCenter X$
\DisplayProof
 \\
\end{tabular}

}}
\end{center}
\noindent Notice that non parametric occurrences of any given (atomic) operational term $a$ of type $\mathsf{Fnc}$ or $\mathsf{Ag}$ are confined to axioms $a\vdash a$. Hence:
\begin{center}
\footnotesize{
\begin{tabular}{@{}lcr@{}}
\bottomAlignProof
\AX$a \fCenter a$
\AX$a \fCenter a$
\BI$a \fCenter a$
\DisplayProof
 & $\rightsquigarrow$ &
\bottomAlignProof
\AX$a \fCenter a$
\DisplayProof
 \\
\end{tabular}
}
\end{center}
\noindent In each case above, the cut in the original derivation is strongly uniform by assumption, and is eliminated by the transformation.
As to cuts on non atomic terms, let us restrict our attention to those cut-terms the main connective of which is $\mand_i, \mband_{\! i} \ , \mra_i, \mbra_i$  for $0\leq i \leq 3$ (the remaining operational connectives are straightforward and left to the reader). 

\begin{center}
\footnotesize{
\begin{tabular}{@{}rcl@{}}
\bottomAlignProof
\AXC{\ \ \ $\vdots$ \raisebox{1mm}{$\pi_0$}}
\noLine
\UI$x \fCenter a$
\AXC{\ \ \ $\vdots$ \raisebox{1mm}{$\pi_1$}}
\noLine
\UI$y\ \fCenter\ b$
\BI$x \mAND_{\! i} \  y\ \fCenter\  a \mand_i b$
\AXC{\ \ \ $\vdots$ \raisebox{1mm}{$\pi_2$}}
\noLine
\UI$ a\mAND_{\! i} \  b\ \fCenter\ z$
\UI$ a \mand_i b\ \fCenter\ z$
\BI$x\mAND_{\! i} \  y\ \fCenter\ z$
\DisplayProof
 & $\rightsquigarrow$ &
\bottomAlignProof
\AXC{\ \ \ $\vdots$ \raisebox{1mm}{$\pi_1$}}
\noLine
\UI$y\ \fCenter\ b$
\AXC{\ \ \ $\vdots$ \raisebox{1mm}{$\pi_0$}}
\noLine
\UI$x \fCenter a$

\AXC{\ \ \,$\vdots$ \raisebox{1mm}{$\pi_2$}}
\noLine
\UI$ a \mAND_{\! i} \  b\ \fCenter\ z$
\UI$ a \ \fCenter\ z \mSBLA_{\!\! i} \  b$
\BI$ x  \fCenter\ z \mSBLA_{\!\! i} \  b$

\UI$ x \mAND_{\! i} \  b\ \fCenter\ z$
\UI$b\ \fCenter\ x\mBRA_{\!\! i} \  z$
\BI$y\ \fCenter\ x \mBRA_{\!\! i} \  z$
\UI$x \mAND_{\! i} \  y\ \fCenter\ z$
\DisplayProof
 \\
\end{tabular}
}
\end{center}

\begin{center}
\footnotesize{

\begin{tabular}{@{}rcl@{}}
\bottomAlignProof

\AXC{\ \ \ $\vdots$ \raisebox{1mm}{$\pi_1$}}
\noLine
\UI$y\ \fCenter\ a \mRA_{\!\!i} \  b$
\UI$y\ \fCenter\ a \mra_{\!\!i} \ b$

\AXC{\ \ \ $\vdots$ \raisebox{1mm}{$\pi_0$}}
\noLine
\UI$x \fCenter a$
\AXC{\ \ \ $\vdots$ \raisebox{1mm}{$\pi_2$}}
\noLine
\UI$b\ \fCenter\ z$
\BI$a \mra_{\!\!i} \   b\ \fCenter\ x \mRA_{\!\!i} \  z$
\BI$y \fCenter x \mRA_{\!\!i} \   z$
\DisplayProof
 & $\rightsquigarrow$ &
\bottomAlignProof
\AXC{\ \ \ $\vdots$ \raisebox{1mm}{$\pi_0$}}
\noLine
\UI$x \fCenter a$
\AXC{\ \ \ $\vdots$ \raisebox{1mm}{$\pi_1$}}
\noLine
\UI$y\ \fCenter\ a \mRA_{\!\!i} \   b$
\UI$a \mBAND_{\! i} \  y \fCenter b$
\UI$a  \fCenter b \mSLA_{\!\! i} \  y$
\BI$x  \fCenter b \mSLA_{\!\! i} \  y$
\UI$x  \mBAND_{\! i} \  y \fCenter b$

\AXC{\ \ \ $\vdots$ \raisebox{1mm}{$\pi_2$}}
\noLine
\UI$b\ \fCenter\ z$
\BI$x \mBAND_{\! i} \  y\ \fCenter\ z$
\UI$y\ \fCenter\ x \mRA_{\!\!i} \ z$
\DisplayProof
 \\
\end{tabular}
}
\end{center}

\begin{center}
\footnotesize{

\begin{tabular}{@{}rcl@{}}
\bottomAlignProof
\AXC{\ \ \ $\vdots$ \raisebox{1mm}{$\pi_0$}}
\noLine
\UI$x \fCenter a$
\AXC{\ \ \ $\vdots$ \raisebox{1mm}{$\pi_1$}}
\noLine
\UI$y\ \fCenter\ b$
\BI$x \mBAND_{\! i} \  y\ \fCenter\  a \mband_{\! i} \  b$
\AXC{\ \ \ $\vdots$ \raisebox{1mm}{$\pi_2$}}
\noLine
\UI$ a\mBAND_{\! i} \  b\ \fCenter\ z$
\UI$ a \mband_{\! i} \  b\ \fCenter\ z$
\BI$x\mBAND_{\! i} \  y\ \fCenter\ z$
\DisplayProof
 & $\rightsquigarrow$ &
\bottomAlignProof
\AXC{\ \ \ $\vdots$ \raisebox{1mm}{$\pi_1$}}
\noLine
\UI$y\ \fCenter\ b$
\AXC{\ \ \ $\vdots$ \raisebox{1mm}{$\pi_0$}}
\noLine
\UI$x \fCenter a$

\AXC{\ \ \,$\vdots$ \raisebox{1mm}{$\pi_2$}}
\noLine
\UI$ a \mBAND_{\! i} \  b\ \fCenter\ z$
\UI$ a \ \fCenter\ z \mSLA_{\!\! i} \  b$
\BI$ x  \fCenter\ z \mSLA_{\!\! i} \  b$
\UI$ x \mBAND_{\! i} \  b\ \fCenter\ z$
\UI$b\ \fCenter\ x\mRA_{\!\! i} \  z$
\BI$y\ \fCenter\ x \mRA_{\!\! i} \  z$
\UI$x \mBAND_{\! i} \  y\ \fCenter\ z$
\DisplayProof
 \\
\end{tabular}
}
\end{center}

\begin{center}
\footnotesize{

\begin{tabular}{@{}rcl@{}}
\bottomAlignProof

\AXC{\ \ \ $\vdots$ \raisebox{1mm}{$\pi_1$}}
\noLine
\UI$y\ \fCenter\ a \mBRA_{\!\!i} \ b$
\UI$y\ \fCenter\ a \mbra_{\!\!i}  \  b$

\AXC{\ \ \ $\vdots$ \raisebox{1mm}{$\pi_0$}}
\noLine
\UI$x \fCenter a$
\AXC{\ \ \ $\vdots$ \raisebox{1mm}{$\pi_2$}}
\noLine
\UI$b\ \fCenter\ z$
\BI$a \mbra_{\!\!i}  \  b\ \fCenter\ x \mBRA_{\!\!i} \  z$
\BI$y \fCenter x \mBRA_{\!\!i} \   z$
\DisplayProof
 & $\rightsquigarrow$ &
\bottomAlignProof
\AXC{\ \ \ $\vdots$ \raisebox{1mm}{$\pi_0$}}
\noLine
\UI$x \fCenter a$
\AXC{\ \ \ $\vdots$ \raisebox{1mm}{$\pi_1$}}
\noLine
\UI$y\ \fCenter\ a \mBRA_{\!\!i}  \  b$
\UI$a \mAND_{\! i} \  y \fCenter b$
\UI$a  \fCenter b \mSBLA_{\!\! i} \  y$
\BI$x  \fCenter b \mSBLA_{\!\! i} \  y$
\UI$x  \mAND_{\! i} \  y \fCenter b$

\AXC{\ \ \ $\vdots$ \raisebox{1mm}{$\pi_2$}}
\noLine
\UI$b\ \fCenter\ z$
\BI$x \mAND_{\! i} \  y\ \fCenter\ z$
\UI$y\ \fCenter\ x \mBRA_{\!\!i} \ z$
\DisplayProof
 \\
\end{tabular}
}
\end{center}

\noindent In each  case above, the cut in the original derivation is strongly uniform by assumption, and after the transformation, cuts of lower complexity are introduced which can be easily verified to be strongly uniform for each $0\leq i\leq 3$.

\subsection{Completeness}\label{Completeness}


To prove the completeness of the Dynamic Calculus it is enough to show that all the axioms and rules of H.IEAK are theorems and, respectively, derived or admissible rules of Dynamic Calculus. Below we show the derivations of the dynamic axioms.

{\scriptsize{
\noindent
$\rule{126.2mm}{0.5pt}$
\begin{itemize}
\item $\alpha \mand p \dashv\vdash (\alpha \mand \top) \pand p$
\end{itemize}

\begin{center}
\begin{tabular}{@{}lr@{}}

\AX$\alpha \fCenter \alpha$
\AX$\textrm{I} \fCenter \top$
\BI$\alpha \mAND \textrm{I} \fCenter \alpha \mand \top$
\AX$\alpha \mAND p \fCenter p$
\BI$(\alpha \mAND \textrm{I}) \,; (\alpha \mAND p) \fCenter (\alpha \mand \top) \pand p$
\UI$\alpha \mAND (\textrm{I} \,; p) \fCenter (\alpha \mand \top) \pand p$
\UI$\textrm{I} \,; p \fCenter \alpha \mBRA (\alpha \mand \top) \pand p$
\UI$p \fCenter \alpha \mBRA (\alpha \mand \top) \pand p$
\UI$\alpha \mAND p \fCenter (\alpha \mand \top) \pand p$
\UI$\alpha \mand p \fCenter (\alpha \mand \top) \pand p$
\DisplayProof

 &

\AX$\alpha \fCenter \alpha$
\AX$\alpha \mBAND p \fCenter p$
\UI$\textrm{I} \fCenter (\alpha \mBAND p) > p$
\UI$\top \fCenter (\alpha \mBAND p) > p$
\UI$(\alpha \mBAND p) \,; \top \fCenter p$

\BI$\alpha \mAND ((\alpha \mBAND p) \,; \top) \fCenter \alpha \mand p$
\UI$(\alpha \mAND \top) \,; p \fCenter \alpha \mand p$
\UI$\alpha \mAND \top \fCenter \alpha \mand p < p$
\UI$\alpha \mand \top \fCenter p > \alpha \mand p$
\UI$\alpha \mand \top \,; p \fCenter \alpha \mand p$
\UI$(\alpha \mand \top) \pand p \fCenter \alpha \mand p$
\DisplayProof
 \\
\end{tabular}
\end{center}

\noindent
$\rule{126.2mm}{0.5pt}$
\begin{itemize}
\item$\alpha \mra p \dashv\vdash (\alpha \mand \top) \pra p$
\end{itemize}

\begin{center}
\begin{tabular}{@{}lr@{}}
\AX$\alpha \fCenter \alpha$
\AX$p \fCenter \alpha \mBRA p$
\BI$\alpha \mra p \fCenter \alpha \mRA (\alpha \mBRA p)$
\UI$\alpha \mBAND \alpha \mra p \fCenter \alpha \mBRA p$
\UI$\textrm{I} \fCenter (\alpha \mBAND \alpha \mra p) > \alpha \mBRA p$
\UI$\top \fCenter (\alpha \mBAND \alpha \mra p) > \alpha \mBRA p$
\UI$(\alpha \mBAND \alpha \mra p) \,; \top \fCenter \alpha \mBRA p$
\UI$\alpha \mAND ((\alpha \mBAND \alpha \mra p) \,; \top) \fCenter p$
\UI$\alpha \mra p \,; \alpha \mAND \top \fCenter p$
\UI$\alpha \mand \top \fCenter \alpha \mra p > p$
\UI$\alpha \mra p \,; \alpha \mand \top \fCenter p$
\UI$\alpha \mand \top \,; \alpha \mra p \fCenter p$
\UI$\alpha \mra p \fCenter \alpha \mand \top > p$
\UI$\alpha \mra p \fCenter (\alpha \mand \top) \pra p$
\DisplayProof

 &

\AX$\alpha \fCenter \alpha$
\AX$\textrm{I} \fCenter \top$
\BI$\alpha \mAND \textrm{I} \fCenter \alpha \mand \top$
\AX$p \fCenter \alpha \mRA p$
\BI$(\alpha \mand \top) \pra p \fCenter (\alpha \mAND \textrm{I}) > \alpha \mRA p$
\UI$(\alpha \mand \top) \pra p \fCenter \alpha \mRA (\textrm{I} > p)$
\UI$\alpha \mBAND (\alpha \mand \top) \pra p \fCenter \textrm{I} > p$
\UI$\alpha \mBAND (\alpha \mand \top) \pra p \fCenter p$
\UI$(\alpha \mand \top) \pra p \fCenter \alpha \mRA p$
\UI$(\alpha \mand \top) \pra p \fCenter \alpha \mra p$
\DisplayProof
 \\
\end{tabular}
\end{center}

\noindent
$\rule{126.2mm}{0.5pt}$
\begin{itemize}
\item $\lc\alpha\rc \top \dashv\vdash 1_\alpha \ \rightsquigarrow\  \alpha \mand \top \dashv\vdash \alpha \mand \top$
\end{itemize}

\begin{center}
\AX$\alpha \fCenter \alpha$
\AX$\top \fCenter \top$
\BI$\alpha \mAND \top \fCenter \alpha \mand \top$
\UI$\alpha \mand \top \fCenter \alpha \mand \top$
\DisplayProof
\end{center}

\newpage

\noindent
$\rule{126.2mm}{0.5pt}$
\begin{itemize}
\item $\alpha \mra \bot \dashv\vdash  \alpha \mand  \top \pra   \bot $
\end{itemize}

\begin{center}
\begin{tabular}{@{}lr@{}}
\AX$ \alpha \fCenter \alpha$

\AX$\bot \fCenter \textrm{I}$
\UI$\bot \fCenter \alpha \mBRA \textrm{I}$

\BI$\alpha \mra \bot \fCenter \alpha \mRA (\alpha\ \mBRA \textrm{I})$

\UI$ \alpha \mBAND ( \alpha \mra \bot )\fCenter \alpha\ \mBRA \textrm{I}$

\UI$ \textrm{I} \fCenter \alpha \mBAND ( \alpha \mra \bot ) > \alpha\ \mBRA \textrm{I}$

\UI$ \top \fCenter \alpha \mBAND ( \alpha \mra \bot ) > \alpha\ \mBRA \textrm{I}$

\UI$ \alpha \mBAND ( \alpha \mra \bot) \,;  \top \fCenter \alpha\ \mBRA \textrm{I}$

\UI$  \alpha \mAND (\alpha \mBAND ( \alpha \mra \bot) \,; \top)\fCenter \textrm{I}$

\UI$  \alpha \mra \bot \,; (\alpha \mAND \top)\fCenter \textrm{I}$
\UI$  \alpha \mra \bot \,; (\alpha \mAND \top)\fCenter \bot$
\UI$  \alpha \mAND \top \fCenter  \alpha \mra \bot  > \bot$

\UI$   \alpha \mand  \top \fCenter \alpha \mra \bot  > \bot$

\UI$  \alpha \mra \bot \,; \alpha \mand  \top \fCenter    \bot$

\UI$  \alpha \mand \top \,; \alpha \mra  \bot \fCenter    \bot$

\UI$  \alpha \mra \bot  \fCenter \alpha \mand  \top >   \bot$

\UI$  \alpha \mra \bot  \fCenter \alpha \mand  \top \pra   \bot$
\DisplayProof
 &
\AX$\alpha \fCenter \alpha$
\AX$\textrm{I} \fCenter \top$
\BI$\alpha \mAND \textrm{I} \fCenter \alpha \mand \top$
\AX$\bot \fCenter \textrm{I}$
\UI$\bot \fCenter \alpha \mRA \textrm{I}$
\UI$ \alpha \mBAND \bot \fCenter \textrm{I}$
\UI$ \alpha \mBAND \bot \fCenter \bot$
\UI$\bot \fCenter \alpha \mRA \bot$
\BI$\alpha \mand \top \rightarrow \bot \fCenter \alpha \mAND \textrm{I}  > \alpha\ \mRA \bot$
\UI$\alpha \mand \top \rightarrow \bot \fCenter \alpha \mRA( \textrm{I}  >  \bot)$
\UI$\alpha \mBAND( \alpha \mand \top \rightarrow \bot) \fCenter  \textrm{I}  >  \bot$

\UI$\textrm{I} ; \alpha \mBAND( \alpha \mand \top \rightarrow \bot) \fCenter    \bot$

\UI$\textrm{I}  \fCenter    \bot <  \alpha \mBAND( \alpha \mand \top \rightarrow \bot)$

\UI$\alpha \mBAND( \alpha \mand \top \rightarrow \bot) \fCenter    \bot$

\UI$ \alpha \mand \top \rightarrow \bot \fCenter  \alpha \mRA  \bot$

\UI$ \alpha \mand \top \rightarrow \bot \fCenter  \alpha \mra  \bot$

\DisplayProof
 \\
\end{tabular}
\end{center}

\noindent
$\rule{126.2mm}{0.5pt}$
\begin{itemize}
\item $\alpha \mand \bot \dashv\vdash \bot  $
\end{itemize}

\begin{center}
\begin{tabular}{@{}lr@{}}
\AX$\bot \fCenter \textrm{I}$
\UI$\bot \fCenter \alpha \mBRA \textrm{I}$
\UI$ \alpha \mAND \bot \fCenter \textrm{I}$
\UI$\alpha \mand \bot \fCenter \textrm{I}$
\UI$ \alpha \mand \bot \fCenter \bot$

\DisplayProof
 &
\AX$\bot \fCenter \textrm{I}$

\UI$\bot \fCenter \alpha \mand \bot$
\DisplayProof
 \\
\end{tabular}
\end{center}

\noindent
$\rule{126.2mm}{0.5pt}$
\begin{itemize}
\item $\alpha \mra \top \dashv\vdash \top $
\end{itemize}

\begin{center}
\begin{tabular}{@{}lr@{}}
\AX$\textrm{I} \fCenter \top$

\UI$\alpha \mra \top \fCenter \top$
\DisplayProof
 &
\AX$  \textrm{I} \fCenter \top$
\UI$ \alpha \mBAND \textrm{I} \fCenter  \top  $

\UI$  \textrm{I} \fCenter \alpha \mRA  \top  $
\UI$  \top \fCenter \alpha \mRA  \top  $

\UI$    \alpha \mBAND \top  \fCenter \top$
\UI$    \top  \fCenter \alpha \mRA  \top$
\UI$    \top  \fCenter \alpha \mra  \top$

\DisplayProof
 \\
\end{tabular}
\end{center}

\noindent
$\rule{126.2mm}{0.5pt}$
\begin{itemize}
\item $\alpha \mra (A\pand B) \dashv\vdash \alpha \mra A \pand \alpha \mra B$
\end{itemize}

\begin{center}
{\footnotesize{
\begin{tabular}{@{}lr@{}}
\AX$\alpha \fCenter \alpha$
\AX$A \fCenter A$
\UI$A\,; B \fCenter A$
\UI$A\pand B \fCenter A$
\BI$\alpha \mra (A\pand B) \fCenter \alpha \mRA A$
\UI$\alpha \mra (A\pand B) \fCenter \alpha \mra A$
\AX$\alpha \fCenter \alpha$
\AX$B \fCenter B$
\UI$A\,; B \fCenter B$
\UI$A\pand B \fCenter B$

\BI$\alpha \mra (A\pand B) \fCenter \alpha \mRA B$
\UI$\alpha \mra (A\pand B) \fCenter \alpha \mra B$
\BI$\alpha \mra (A\pand B)\,; \alpha \mra (A\pand B) \fCenter \alpha \mra A \pand \alpha \mra B$
\UI$\alpha \mra (A\pand B) \fCenter \alpha \mra A \pand \alpha \mra B$
\DisplayProof
 &
\AX$\alpha \fCenter \alpha$
\AX$A \fCenter A$
\BI$\alpha \mra A \fCenter \alpha \mRA A$
\UI$\alpha \mBAND (\alpha \mra A) \fCenter A$
\AX$\alpha \fCenter \alpha$
\AX$B \fCenter B$
\BI$\alpha \mra B \fCenter \alpha \mRA B$
\UI$\alpha \mBAND (\alpha \mra B) \fCenter B$
\BI$\alpha \mBAND (\alpha \mra A)\ ; \alpha \mBAND (\alpha \mra B) \fCenter A\pand B$
\UI$\alpha \mBAND (\alpha \mra A\,; \alpha \mra B) \fCenter A \pand B$
\UI$\alpha \mra A\,; \alpha \mra B \fCenter \alpha \mRA (A \pand B)$
\UI$\alpha \mra A \pand \alpha \mra B \fCenter \alpha \mra (A\pand B)$
\DisplayProof
 \\
\end{tabular}}}
\end{center}

\newpage

\noindent
$\rule{126.2mm}{0.5pt}$
\begin{itemize}
\item $\alpha \mand (A \pand B) \dashv\vdash \alpha \mand A \pand \alpha \mand B$
\end{itemize}

\begin{center}
{\scriptsize{
\begin{tabular}{@{}lr@{}}
\AX$\alpha \fCenter \alpha$
\AX$A \fCenter A$
\UI$A\,; B \fCenter A$
\UI$A \pand B \fCenter A$
\BI$\alpha \mAND A \pand B \fCenter \alpha \mand A$
\UI$\alpha \mand (A \pand B) \fCenter \alpha \mand A$

\AX$\alpha \fCenter \alpha$
\AX$B \fCenter B$
\UI$A\,; B \fCenter B$
\UI$A \pand B \fCenter B$

\BI$\alpha \mAND A \pand B \fCenter \alpha \mand B$
\UI$\alpha \mand (A \pand B) \fCenter \alpha \mand B$
\BI$\alpha \mand (A \pand B)\,; \alpha \mand (A\wedge B) \fCenter \alpha \mand A \wedge \alpha \mand B$
\UI$\alpha \mand (A \pand B) \fCenter \alpha \mand A \wedge \alpha \mand B$
\DisplayProof

 &
\!\!\!\!\!\!\!\!\!\!\!\!\!\!\!\!\!

\AX$\alpha \fCenter \alpha$
\AX$A \fCenter A$
\LeftLabel{\emph{balance}}
\UI$\alpha \mAND A \fCenter \alpha \mRA A$
\UI$\alpha \mBAND (\alpha \mAND A) \fCenter A$
\AX$B \fCenter B$
\RightLabel{\emph{balance}}
\UI$\alpha \mAND B \fCenter \alpha \mRA B$
\UI$\alpha \mBAND (\alpha \mAND B) \fCenter B$
\BI$\alpha \mBAND (\alpha \mAND A)\,; \alpha \mBAND (\alpha \mAND B) \fCenter A \pand B$
\UI$\alpha \mBAND (\alpha \mAND A\,; \alpha \mAND B) \fCenter A \pand B$

\BI$\alpha \mAND (\alpha \mBAND (\alpha \mAND A\,; \alpha \mAND B)) \fCenter \alpha \mand (A \pand B)$

\UI$\alpha \mBAND (\alpha \mAND A\,; \alpha \mAND B) \fCenter \alpha \mBRA ( \alpha \mand (A \pand B))$
\UI$\textrm{I}  \fCenter \alpha \mBAND (\alpha \mAND A\,; \alpha \mAND B) > \alpha \mBRA ( \alpha \mand (A \pand B)) $

\UI$   \alpha \mBAND (\alpha \mAND A\,; \alpha \mAND B)\ ; \textrm{I}\   \fCenter  \alpha \mBRA ( \alpha \mand (A \pand B)) $

\UI$ \alpha \mAND (  \alpha \mBAND (\alpha \mAND A\,; \alpha \mAND B)) ;  \textrm{I}) \fCenter  \alpha \mand (A \pand B) $
\RightLabel{\scriptsize{$conj$}}
\UI$ (\alpha \mAND A\,; \alpha \mAND B) ; \alpha \mAND  \textrm{I}\ \fCenter  \alpha \mand (A \pand B) $
\UI$ \alpha \mAND A\,; (\alpha \mAND B ; \alpha \mAND  \textrm{I}) \fCenter  \alpha \mand (A \pand B) $

\UI$  \alpha \mAND B ; \alpha \mAND  \textrm{I} \fCenter  \alpha \mAND A\,> \alpha \mand (A \pand B) $

\UI$  \alpha \mAND (B ;   \textrm{I}) \fCenter  \alpha \mAND A\,> \alpha \mand (A \pand B) $
\UI$  B ;   \textrm{I} \fCenter  \alpha \mBRA (\alpha \mAND A\,> \alpha \mand (A \pand B)) $
\UI$   \textrm{I} \fCenter  B > (\alpha \mBRA ( \alpha \mAND A\,> \alpha \mand (A \pand B )))$
\UI$     B \fCenter \alpha \mBRA ( \alpha \mAND A\,> \alpha \mand (A \pand B)) $

\UI$  \alpha \mAND    B \fCenter \alpha \mAND A\,> \alpha \mand (A \pand B) $
\UI$  \alpha \mand    B \fCenter \alpha \mAND A\,> \alpha \mand (A \pand B) $
\UI$  \alpha \mAND A\ ; \alpha \mand    B \fCenter \alpha \mand (A \pand B) $
\UI$  \alpha \mAND A\  \fCenter \alpha \mand (A \pand B) < \alpha \mand    B$
\UI$  \alpha \mand A\  \fCenter \alpha \mand (A \pand B) < \alpha \mand    B$
\UI$  \alpha \mand A\ ; \alpha \mand    B  \fCenter \alpha \mand (A \pand B) $
\UI$  \alpha \mand A\ \land \alpha \mand    B  \fCenter \alpha \mand (A \pand B) $
\DisplayProof
 \\
\end{tabular}}}
\end{center}

\noindent
$\rule{126.2mm}{0.5pt}$
\begin{itemize}
\item $\alpha \mand (A\vee B) \dashv\vdash \alpha \mand A\vee \alpha \mand B$
\end{itemize}

\begin{center}
{\scriptsize{
\begin{tabular}{@{}lr@{}}
\AX$\alpha \fCenter \alpha$
\AX$A \fCenter A$
\BI$\alpha \mAND A \fCenter \alpha \mand A$
\UI$A \fCenter \alpha \mBRA (\alpha \mand A)$

\AX$\alpha \fCenter \alpha$
\AX$B \fCenter B$
\BI$\alpha \mAND B \fCenter \alpha \mand B$
\UI$B \fCenter \alpha \mBRA (\alpha \mand B)$
\BI$A \por B \fCenter \alpha \mBRA (\alpha \mand A); \alpha \mBRA (\alpha \mand B)$
\UI$A \por B \fCenter \alpha \mBRA (\alpha \mand A\,; \alpha \mand B)$
\UI$\alpha \mAND A \por B \fCenter \alpha \mand A\,; \alpha \mand B$
\UI$\alpha \mand (A \por B) \fCenter \alpha \mand A\,; \alpha \mand B$
\UI$\alpha \mand (A \por B) \fCenter \alpha \mand A \por \alpha \mand B$
\DisplayProof

 &

\AX$\alpha \fCenter \alpha$

\AX$A \fCenter A$
\UI$A > A \fCenter B$
\UI$A \fCenter A\,; B$
\UI$A \fCenter A \por B$

\BI$\alpha \mAND A \fCenter \alpha \mand (A \por B)$
\UI$\alpha \mand A \fCenter \alpha \mand (A \por B)$

\AX$\alpha \fCenter \alpha$

\AX$B \fCenter B$
\UI$B < B\fCenter A $
\UI$B \fCenter A\,; B$
\UI$B \fCenter A \por B$

\BI$\alpha \mAND B \fCenter \alpha \mand (A \por B)$
\UI$\alpha \mand B \fCenter \alpha \mand (A \por B)$
\BI$\alpha \mand A \por \alpha \mand B \fCenter \alpha \mand (A \por B)\,; \alpha \mand (A \por B)$
\UI$\alpha \mand A \por \alpha \mand B \fCenter \alpha \mand (A \por B)$
\DisplayProof
 \\
\end{tabular}
}}
\end{center}

\newpage

\noindent
$\rule{126.2mm}{0.5pt}$
\begin{itemize}
\item $\alpha \mra (A\por B) \dashv\vdash (\alpha \mand \top) \pra (\alpha \mand A\por \alpha \mand B)$
\end{itemize}

\begin{center}
{\scriptsize{
\begin{tabular}{@{}lr@{}}
\AX$\alpha \fCenter \alpha$
\AX$\alpha \fCenter \alpha$
\AX$A \fCenter A$
\BI$\alpha \mAND A \fCenter \alpha \mand A$
\UI$A \fCenter \alpha \mBRA (\alpha \mand A)$
\AX$\alpha \fCenter \alpha$
\AX$B \fCenter B$
\BI$\alpha \mAND B \fCenter \alpha \mand B$
\UI$B \fCenter \alpha \mBRA (\alpha \mand B)$
\BI$A\por B \fCenter \alpha \mBRA (\alpha \mand A\,) ; \alpha \mBRA (\alpha \mand B)$
\UI$A\por B \fCenter \alpha \mBRA (\alpha \mand A\,; \alpha \mand B)$
\UI$\alpha \mAND (A\por B) \fCenter  \alpha \mand A\,; \alpha \mand B$
\UI$\alpha \mAND (A\por B) \fCenter  \alpha \mand A\, \por \alpha \mand B$
\UI$ A\por B \fCenter \alpha \mBRA  (\alpha \mand A\, \por \alpha \mand B)$

\BI$\alpha \mra (A\por B) \fCenter \alpha \mRA (\alpha \mBRA (\alpha \mand A\por \alpha \mand B))$

\UIC{$\alpha \mBAND (\alpha \mra (A\por B)) \fCenter \alpha \mBRA (\alpha \mand A\por \alpha \mand B)$}
\UIC{$\textrm{I} \fCenter \alpha \mBAND (\alpha \mra (A\por B)) > \alpha \mBRA (\alpha \mand A\por \alpha \mand B)$}
\UIC{$(\alpha \mBAND (\alpha \mra (A\por B)))\, ; \textrm{I}\fCenter \alpha \mBRA (\alpha \mand A\por \alpha \mand B)$}

\UIC{$ \alpha \mAND ((\alpha \mBAND (\alpha \mra (A\por B)))\, ; \textrm{I})\fCenter\alpha \mand A\por \alpha \mand B$}
\RightLabel{\emph{conj}$_0\!\mand$}

\UIC{$  (\alpha \mra (A\por B))\, ; \alpha \mAND \textrm{I}\fCenter\alpha \mand A\por \alpha \mand B$}
\UIC{$   \alpha \mAND \textrm{I}\fCenter \alpha \mra (A\por B) >  \alpha \mand A\por \alpha \mand B$}
\UIC{$  \textrm{I}\fCenter  \alpha \mRA ( \alpha \mra (A\por B) >  \alpha \mand A\por \alpha \mand B)$}
\UIC{$  \top \fCenter  \alpha \mRA ( \alpha \mra (A\por B) >  \alpha \mand A\por \alpha \mand B)$}
\UIC{$   \alpha \mAND \top \fCenter \alpha \mra (A\por B) >  \alpha \mand A\por \alpha \mand B$}
\UIC{$   \alpha \mand \top \fCenter \alpha \mra (A\por B) >  \alpha \mand A\por \alpha \mand B$}
\UIC{$ (\alpha \mra (A\por B))\, ;  (\alpha \mand \top) \fCenter  \alpha \mand A\por \alpha \mand B$}
\UIC{$ (\alpha \mand \top)\, ; (\alpha \mra (A\por B))  \fCenter  \alpha \mand A\por \alpha \mand B$}
\UIC{$ (\alpha \mra (A\por B))  \fCenter  (\alpha \mand \top) > \alpha \mand A\por \alpha \mand B$}
\UIC{$ (\alpha \mra (A\por B))  \fCenter  (\alpha \mand \top) \pra (\alpha \mand A\por \alpha \mand B)$}

\DisplayProof

 &
\!\!\!\!\!\!\!\!
\AX$\alpha \fCenter \alpha$
\AX$\textrm{I} \fCenter \top$
\BI$\alpha \mAND \textrm{I} \fCenter \alpha \mand \top$
\AX$A \fCenter A$

\UI$\alpha \mAND A \fCenter \alpha \mRA A$
\UI$\alpha \mand A \fCenter \alpha \mRA A$
\AX$B \fCenter B$
\UI$\alpha \mAND B \fCenter \alpha \mRA B$
\UI$\alpha \mand B \fCenter \alpha \mRA B$
\BI$\alpha \mand A \por \alpha \mand B \fCenter \alpha \mRA A\,; \alpha \mRA B$
\UI$\alpha \mand A \por \alpha \mand B \fCenter \alpha \mRA (A\,; B)$
\UI$\alpha \mBAND (\alpha\mand A \por \alpha \mand B) \fCenter A\,; B$
\UI$\alpha \mBAND (\alpha\mand A \por \alpha \mand B) \fCenter A\, \por B$
\UI$\alpha \mand A \por \alpha \mand B \fCenter \alpha \mRA (A \por B)$
\BI$( \alpha \mand \top) \pra (\alpha \mand A \por \alpha \mand B) \fCenter \alpha \mAND \textrm{I} > \alpha \mRA(A \por B)$

\UI$( \alpha \mand \top) \pra (\alpha \mand A \por \alpha \mand B) \fCenter \alpha \mRA( \textrm{I} > ( A \por B))$

\UI$\alpha \mBAND(( \alpha \mand \top) \pra (\alpha \mand A \por \alpha \mand B)) \fCenter  \textrm{I} > ( A \por B)$
\UI$ \textrm{I}\, ; \alpha \mBAND(( \alpha \mand \top) \pra (\alpha \mand A \por \alpha \mand B)) \fCenter  A \por B$
\UIC{$ \textrm{I}\,  \fCenter  (A \por B) < \alpha \mBAND(( \alpha \mand \top) \pra (\alpha \mand A \por \alpha \mand B))$}
\UI$ \alpha \mBAND(( \alpha \mand \top) \pra (\alpha \mand A \por \alpha \mand B)) \fCenter  A \por B$
\UI$( \alpha \mand \top) \pra (\alpha \mand A \por \alpha \mand B) \fCenter   \alpha \mRA(A \por B)$
\UI$( \alpha \mand \top) \pra (\alpha \mand A \por \alpha \mand B) \fCenter   \alpha \mra(A \por B)$

\DisplayProof
 \\
\end{tabular}
}}
\end{center}

\noindent
$\rule{126.2mm}{0.5pt}$
\begin{itemize}
\item $ \alpha\, \mand ( A \pra B) \dashv\vdash (\alpha \mand \top) \pand (\alpha \mand A \pra \alpha \mand B)$ 
\end{itemize}

\begin{center}
{\scriptsize{
\begin{tabular}{@{}lr@{}}
\!\!\!\!\!\!\!\!\!\!\!\!\!\!\!
\AX$\alpha \fCenter \alpha$
\AX$\textrm{I} \fCenter \top$
\BI$\alpha \mAND  \textrm{I} \fCenter \alpha \mand \top$

\AX$A \fCenter A$
\UI$\alpha \mAND A \fCenter \alpha \mRA A$
\UI$\alpha\mand A \fCenter \alpha \mRA A$
\UI$\alpha \mBAND (\alpha \mand A) \fCenter A$
\AX$\alpha \fCenter \alpha$

\AX$B \fCenter B$
\BI$\alpha \mAND B \fCenter \alpha \mand B$
\UI$B \fCenter \alpha \mBRA (\alpha \mand B)$
\BI$A \pra B \fCenter (\alpha \mBAND (\alpha \mand A)) > (\alpha \mBRA (\alpha \mand B))$
\UI$A \pra B \fCenter \alpha \mBRA (\alpha \mand A > \alpha \mand B)$
\UI$\alpha \mAND (A \pra B) \fCenter \alpha \mand A > \alpha \mand B$
\UI$\alpha \mAND (A \pra B) \fCenter \alpha \mand A \pra \alpha \mand B$
\BI$(\alpha\, \mAND \textrm{I})\, ; \alpha \mAND  (A \pra B) \fCenter (\alpha \mand \top) \pand (\alpha\mand A \pra \alpha\mand B)$
\UI$\alpha\, \mAND (\textrm{I}\, ; A \pra B) \fCenter (\alpha \mand \top) \pand (\alpha\mand A \pra \alpha\mand B)$
\UI$\textrm{I}\, ; A \pra B \fCenter \alpha\, \mBRA ( (\alpha \mand \top) \pand (\alpha\mand A \pra \alpha\mand B))$
\UIC{$\textrm{I}\,  \fCenter (\alpha\, \mBRA ( (\alpha \mand \top) \pand (\alpha\mand A \pra \alpha\mand B))) < (A \pra B)$}
\UI$ A \pra B \fCenter \alpha\, \mBRA ( (\alpha \mand \top) \pand (\alpha\mand A \pra \alpha\mand B))$
\UI$ \alpha\, \mAND ( A \pra B) \fCenter (\alpha \mand \top) \pand (\alpha\mand A \pra \alpha \mand B)$
\UI$ \alpha\, \mand ( A \pra B) \fCenter (\alpha \mand \top) \pand (\alpha \mand A \pra \alpha \mand B)$

\DisplayProof

 &

\!\!\!\!\!\!\!\!\!\!\!\!\!\!\!
\AX$\alpha \fCenter \alpha$
\AX$A \fCenter A$
\BI$\alpha \mAND A \fCenter \alpha \mand A$

\AX$B \fCenter B$
\UI$\alpha \mAND B \fCenter \alpha \mRA B$
\UI$\alpha \mand B \fCenter \alpha \mRA B$
\BI$\alpha \mand A \pra \alpha \mand B \fCenter \alpha \mAND A > \alpha \mRA B$

\UIC{$\alpha \mand A \pra \alpha \mand B \fCenter \alpha \mRA (A > B)$}
\UIC{$\alpha \mBAND (\alpha \mand A \pra \alpha \mand B) \fCenter A > B$}
\UIC{$\alpha \mBAND (\alpha \mand A \pra \alpha \mand B) \fCenter A \pra B$}
\UIC{$\alpha \mAND (\alpha \mBAND (\alpha \mand A \pra \alpha \mand B)) \fCenter  \alpha \mand (A \pra B)$}
\UIC{$\alpha \mBAND (\alpha \mand A \pra \alpha \mand B) \fCenter \alpha \mBRA ( \alpha \mand (A \pra B))$}
\UIC{$\textrm{I} \fCenter \alpha \mBAND (\alpha \mand A \pra \alpha \mand B)> \alpha \mBRA ( \alpha \mand (A \pra B))$}
\UIC{$\alpha \mBAND (\alpha \mand A \pra \alpha \mand B)\, ; \textrm{I}  \fCenter \alpha \mBRA ( \alpha \mand (A \pra B))$}
\UIC{$ \alpha \mAND(\alpha \mBAND (\alpha \mand A \pra \alpha \mand B)\, ; \textrm{I}  )\fCenter  \alpha \mand (A \pra B)$}
\UIC{$ \alpha \mand A \pra \alpha \mand B\, ;(\alpha \mAND \textrm{I}  )\fCenter  \alpha \mand (A \pra B)$}
\UIC{$\alpha \mAND \textrm{I}  \fCenter  \alpha \mand A \pra \alpha \mand B\, >  \alpha \mand (A \pra B)$}
\UIC{$ \textrm{I}  \fCenter \alpha \mBRA( \alpha \mand A \pra \alpha \mand B\, >  \alpha \mand (A \pra B))$}
\UIC{$ \top  \fCenter \alpha \mBRA( \alpha \mand A \pra \alpha \mand B\, >  \alpha \mand (A \pra B))$}
\UIC{$\alpha \mAND \top  \fCenter \alpha \mand A \pra \alpha \mand B\, >  \alpha \mand (A \pra B)$}
\UIC{$\alpha \mand \top  \fCenter \alpha \mand A \pra \alpha \mand B\, >  \alpha \mand (A \pra B)$}
\UIC{$\alpha \mand A \pra \alpha \mand B\, ; \alpha \mand \top  \fCenter   \alpha \mand (A \pra B)$}
\UIC{$ \alpha \mand \top\, ; \alpha \mand A \pra \alpha \mand B\,  \fCenter   \alpha \mand (A \pra B)$}
\UIC{$ \alpha \mand \top\, \land (\alpha \mand A \pra \alpha \mand B)  \fCenter   \alpha \mand (A \pra B)$}
\DisplayProof
 \\
\end{tabular}
}}
\end{center}

\noindent
$\rule{126.2mm}{0.5pt}$
\begin{itemize}
\item $   \alpha \mra (A \pra B)  \dashv\vdash  \alpha\mand A\, \pra \alpha\mand B $ 
\end{itemize}
\begin{center}
{\scriptsize{
\begin{tabular}{@{}lr@{}}
\AX$\alpha \fCenter \alpha$
\AX$A \fCenter A$
\UI$\alpha\mAND A \fCenter \alpha\mRA A$
\UI$\alpha \mBAND (\alpha\mAND A )\fCenter A$
\AX$\alpha \fCenter \alpha$
\AX$B \fCenter B$
\BI$\alpha\mAND B \fCenter \alpha\mand B$
\UI$B \fCenter \alpha\mBRA (\alpha\mand B)$
\BI$A \pra B \fCenter \alpha \mBAND (\alpha\mAND A) > \alpha \mBRA( \alpha\mand B)$
\UI$A \pra B \fCenter \alpha \mBRA (\alpha\mAND A > \alpha\mand B)$
\BI$\alpha \mra (A \pra B) \fCenter \alpha\mRA ( \alpha \mBRA (\alpha\mAND A > \alpha\mand B))$
\UI$\alpha\mBAND (\alpha \mra (A \pra B) \fCenter \alpha \mBRA (\alpha\mAND A > \alpha\mand B)$
\UI$\textrm{I} \fCenter \alpha\mBAND (\alpha \mra (A \pra B) > \alpha \mBRA (\alpha\mAND A > \alpha\mand B)$
\UI$\alpha\mBAND (\alpha \mra (A \pra B)\, ;\textrm{I} \fCenter \alpha \mBRA (\alpha\mAND A > \alpha\mand B)$
\UI$ \alpha \mAND (\alpha\mBAND (\alpha \mra (A \pra B)\, ;\textrm{I}) \fCenter\alpha\mAND A > \alpha\mand B$
\RightLabel{$conj$}
\UI$ \alpha \mra (A \pra B)\, ; (\alpha \mAND\textrm{I}) \fCenter\alpha\mAND A > \alpha\mand B$
\UI$ (\alpha \mAND\textrm{I})\, ; \alpha \mra (A \pra B) \fCenter\alpha\mAND A > \alpha\mand B$
\UI$\alpha\mAND A\, ; ((\alpha \mAND\textrm{I})\, ; \alpha \mra (A \pra B)) \fCenter \alpha\mand B$
\UI$(\alpha\mAND A\, ; (\alpha \mAND\textrm{I}))\, ; \alpha \mra (A \pra B) \fCenter \alpha\mand B$
\UI$\alpha\mAND A\, ; (\alpha \mAND\textrm{I})\,  \fCenter \alpha\mand B < \alpha \mra (A \pra B)$
\UI$\alpha\mAND (A\, ; \textrm{I})\,  \fCenter \alpha\mand B < \alpha \mra (A \pra B)$
\UI$A\, ; \textrm{I}\,  \fCenter \alpha\mBRA (\alpha\mand B < \alpha \mra (A \pra B))$
\UI$ \textrm{I}\,  \fCenter A\, > (\alpha\mBRA (\alpha\mand B < \alpha \mra (A \pra B)))$
\UI$A\,   \fCenter \alpha\mBRA (\alpha\mand B < \alpha \mra (A \pra B))$
\UI$ \alpha\mAND A\,   \fCenter \alpha\mand B < \alpha \mra (A \pra B)$
\UI$ \alpha\mand A\,   \fCenter \alpha\mand B < \alpha \mra (A \pra B)$
\UI$ \alpha\mand A\,   \fCenter \alpha\mand B < \alpha \mra (A \pra B)$
\UI$ \alpha\mand A\, ;  \alpha \mra (A \pra B)  \fCenter \alpha\mand B $
\UI$   \alpha \mra (A \pra B)  \fCenter \alpha\mand A\, > \alpha\mand B $
\UI$   \alpha \mra (A \pra B)  \fCenter \alpha\mand A\, \pra \alpha\mand B $

\DisplayProof

 &
\!\!\!\!\!\!\!\!\!\!
\AX$\alpha \fCenter \alpha$
\AX$A \fCenter A$
\BI$\alpha \mAND A \fCenter \alpha \mand A$

\AX$\alpha \fCenter \alpha$
\AX$B \fCenter B$
\BI$\alpha \mAND B \fCenter \alpha \mRA B$
\UI$\alpha \mand B \fCenter \alpha \mRA B$
\BI$\alpha \mand A\rightarrow \alpha \mand B \fCenter \alpha \mAND A > \alpha \mRA B$
\UI$\alpha \mand A\rightarrow \alpha \mand B \fCenter \alpha \mRA (A > B)$
\UI$\alpha \mBAND (\alpha \mand A\rightarrow \alpha \mand B) \fCenter A > B$
\UI$\alpha \mBAND (\alpha \mand A\rightarrow \alpha \mand B) \fCenter A \rightarrow B$
\UI$\alpha \mand A\rightarrow \alpha \mand B \fCenter \alpha \mRA A \rightarrow B$
\UI$\alpha \mand A\rightarrow \alpha \mand B \fCenter \alpha \mra (A \rightarrow B)$
\DisplayProof
 \\
\end{tabular}
}}
\end{center}
\newpage

\noindent
$\rule{126.2mm}{0.5pt}$
\begin{itemize}
\item $1_\alpha \pand \bigvee \{\lc\aga\rc \lc\beta\rc A\,|\,\alpha\aga\beta\} \vdash \lc\alpha\rc \lc \aga \rc A \rightsquigarrow (\alpha \mand \top\,) \land (\aga\mand ((\aga \mband \alpha)\mand A))\fCenter \alpha\mand (\aga \mand A)$
\end{itemize}
\begin{center}
{\footnotesize

\AX$\alpha \fCenter \alpha$
\AX$\aga \fCenter \aga$
\AX$A \fCenter A$
\BI$ \aga \mAND A \fCenter \aga \mand A$
\BI$\alpha \mAND (\aga \mAND A) \fCenter \alpha \mand (\aga \mand A)$
\UI$\aga \mAND A \fCenter \alpha \mBRA (\alpha \mand (\aga \mand A))$
\UI$ A \fCenter \aga \mBRA (\alpha \mBRA (\alpha \mand (\aga \mand A)))$
\RightLabel{swap-in$_R$}
\UI$ A \fCenter (\aga \mBAND \alpha) \mBRA (\aga \mBRA (  (\alpha \mAND \textrm{I}) > (\alpha \mand (\aga \mand A))  )  )$
\UI$  (\aga \mBAND \alpha) \mAND A \fCenter   \aga \mBRA (  (\alpha \mAND \textrm{I}) > (\alpha \mand (\aga \mand A))  )     $
\UI$  \aga \mBAND \alpha \fCenter   (\aga \mBRA (  (\alpha \mAND \textrm{I}) > (\alpha \mand (\aga \mand A))  )  )  \mBLA  A $
\UI$  \aga \mband \alpha \fCenter   (\aga \mBRA (  (\alpha \mAND \textrm{I}) > (\alpha \mand (\aga \mand A))  )  )  \mBLA  A $
\UI$  (\aga \mband \alpha) \mand A \fCenter   \aga \mBRA (  (\alpha \mAND \textrm{I}) > (\alpha \mand (\aga \mand A))  )     $
\UI$\aga \mAND (  (\aga \mband \alpha) \mand A ) \fCenter    (\alpha \mAND \textrm{I}) > (\alpha \mand (\aga \mand A))    $
\UI$\aga \mand (  (\aga \mband \alpha) \mand A ) \fCenter    (\alpha \mAND \textrm{I}) > (\alpha \mand (\aga \mand A))    $
\UI$(\alpha \mAND \textrm{I}) ; (\aga \mand (  (\aga \mband \alpha) \mand A )) \fCenter     \alpha \mand (\aga \mand A)  $
\UI$\alpha \mAND \textrm{I}  \fCenter    ( \alpha \mand (\aga \mand A) ) < (\aga \mand (  (\aga \mband \alpha) \mand A ))$
\UI$  \textrm{I}  \fCenter  \alpha  \mBRA (  ( \alpha \mand (\aga \mand A) ) < (\aga \mand (  (\aga \mband \alpha) \mand A ))  )$
\UI$  \top  \fCenter  \alpha  \mBRA (  ( \alpha \mand (\aga \mand A) ) < (\aga \mand (  (\aga \mband \alpha) \mand A ))  )$
\UI$\alpha \mAND \top  \fCenter   (  \alpha \mand (\aga \mand A) ) < (\aga \mand (  (\aga \mband \alpha) \mand A ))$

\UI$\alpha \mand \top \fCenter    ( \alpha \mand (\aga \mand A) ) < (\aga \mand (  (\aga \mband \alpha) \mand A ))$
\UI$(\alpha \mand \top ) ; (\aga \mand (  (\aga \mband \alpha) \mand A )) \fCenter     \alpha \mand (\aga \mand A)  $
\UI$(\alpha \mand \top ) \land (\aga \mand (  (\aga \mband \alpha) \mand A )) \fCenter     \alpha \mand (\aga \mand A)  $

\DisplayProof
}
\end{center}



$\rule{126.2mm}{0.5pt}$
\begin{itemize}
\item $[\alpha][\aga] A \vdash Pre(\alpha)\rightarrow \bigwedge \{[\aga][\beta] A\,|\,\alpha\aga\beta\} \rightsquigarrow \alpha \mra (\aga \mra A) \fCenter (\alpha \mand \top) \pra (\aga \mra ((\aga \mband \alpha)\mra A))$
\end{itemize}

\begin{center}
{\footnotesize
\AX$\alpha \fCenter \alpha$
\AX$\aga \fCenter \aga$
\AX$A \fCenter A$
\BI$\aga \mra A \fCenter \aga \mRA A$
\BI$\alpha \mra (\aga \mra A) \fCenter \alpha \mRA(\aga \mRA A )$
\UI$\alpha \mBAND(\alpha \mra (\aga \mra A)) \fCenter \aga \mRA A $
\UI$\aga \mBAND (\alpha \mBAND(\alpha \mra (\aga \mra A))) \fCenter  A $
\LeftLabel{\emph{swap-in}$_L$}
\UI$(\aga \mBAND \alpha) \mBAND (\aga \mBAND  (  
( \alpha \mAND \textrm{I} ) ;
( \alpha \mra (\aga \mra A)  )    )) \fCenter  A $
\UI$\aga \mBAND \alpha \fCenter  A \mLA (\aga \mBAND  (  
( \alpha \mAND \textrm{I} ) ;
( \alpha \mra (\aga \mra A)  )    ))  $
\UI$\aga \mband \alpha \fCenter  A \mLA (\aga \mBAND  (  
( \alpha \mAND \textrm{I} ) ;
( \alpha \mra (\aga \mra A)  )    ))  $
\UI$(\aga \mband \alpha) \mBAND (\aga \mBAND  (  
( \alpha \mAND \textrm{I} ) ;
( \alpha \mra (\aga \mra A)  )    )) \fCenter  A $
\UI$ \aga \mBAND  (  
( \alpha \mAND \textrm{I} ) ;
( \alpha \mra (\aga \mra A)  )    ) \fCenter (\aga \mband \alpha) \mRA  A $
\UI$ \aga \mBAND  (  
( \alpha \mAND \textrm{I} ) ;
( \alpha \mra (\aga \mra A)  )    ) \fCenter (\aga \mband \alpha) \mra  A $
\UI$  
( \alpha \mAND \textrm{I} ) ;
( \alpha \mra (\aga \mra A)  )     \fCenter \aga \mRA ( (\aga \mband \alpha) \mra  A ) $
\UI$  
( \alpha \mAND \textrm{I} ) ;
( \alpha \mra (\aga \mra A)  )     \fCenter \aga \mra ( (\aga \mband \alpha) \mra  A ) $
\UI$  
 \alpha \mAND \textrm{I}
    \fCenter (\aga \mra ( (\aga \mband \alpha) \mra  A ) )
    < ( \alpha \mra (\aga \mra A)  ) $
\UI$  
  \textrm{I}
    \fCenter \alpha \mBRA (  (\aga \mra ( (\aga \mband \alpha) \mra  A ) )
    < ( \alpha \mra (\aga \mra A)  ) ) $
\UI$  
  \top
    \fCenter \alpha \mBRA (  (\aga \mra ( (\aga \mband \alpha) \mra  A ) )
    < ( \alpha \mra (\aga \mra A)  ) ) $
\UI$  
 \alpha \mAND \top
    \fCenter (\aga \mra ( (\aga \mband \alpha) \mra  A ) )
    < ( \alpha \mra (\aga \mra A)  ) $
\UI$  
 \alpha \mand \top
    \fCenter (\aga \mra ( (\aga \mband \alpha) \mra  A ) )
    < ( \alpha \mra (\aga \mra A)  ) $
\UI$  
( \alpha \mand \top ) ;
( \alpha \mra (\aga \mra A)  )     \fCenter \aga \mra ( (\aga \mband \alpha) \mra  A ) $
\UI$  
( \alpha \mra (\aga \mra A)  )     \fCenter 
( \alpha \mand \top )  > ( \aga \mra ( (\aga \mband \alpha) \mra  A ) ) $
\UI$  
( \alpha \mra (\aga \mra A)  )     \fCenter 
( \alpha \mand \top )  \rightarrow ( \aga \mra ( (\aga \mband \alpha) \mra  A ) ) $

\DisplayProof
}
\end{center}


$\rule{126.2mm}{0.5pt}$
\begin{itemize}
\item $\lc\alpha\rc [\aga] A \vdash Pre(\alpha) \pand \bigwedge \{[\aga] \ls\beta\rs A\,|\, \alpha\aga\beta\} \rightsquigarrow 
\alpha \mand (\aga \mra A) 
\fCenter ( \alpha \mand \top) \land (    \aga \mra ( (\aga \mband \alpha)\mra A ) ) $
\end{itemize}

\begin{center}
{\scriptsize
\begin{tabular}{@{}c@{}}
\AX$\alpha \fCenter \alpha$
\AX$\textrm{I} \fCenter \top$
\BI$\alpha \mAND \textrm{I} \fCenter \alpha \mand \top $

\AX$\aga \fCenter \aga$
\AX$A \fCenter A$
\BI$\aga \mra A \fCenter \aga \mRA A$
\LeftLabel{\fns{$balance$}}
\UI$\alpha \mAND (\aga \mra A) \fCenter \alpha \mRA (\aga \mRA A) $
\UI$\alpha \mBAND (\alpha \mAND (\aga \mra A) )\fCenter \aga \mRA A $
\UI$ \aga \mBAND(\alpha \mBAND (\alpha \mAND (\aga \mra A) ))\fCenter A $
\LeftLabel{\fns{swap-in$_L$}}
\UI$ (\aga \mBAND \alpha)\mBAND (\aga \mBAND (
( \alpha \mAND \textrm{I} ) ;
( \alpha \mAND (\aga \mra A)   )  ))\fCenter A $
\UI$ \aga \mBAND \alpha \fCenter A 
\mBLA (\aga \mBAND (
( \alpha \mAND \textrm{I} ) ;
( \alpha \mAND (\aga \mra A)   )  ))$
\UI$ \aga \mband \alpha \fCenter A 
\mBLA (\aga \mBAND (
( \alpha \mAND \textrm{I} ) ;
( \alpha \mAND (\aga \mra A)   )  ))$
\UI$ (\aga \mband \alpha)\mBAND (\aga \mBAND (
( \alpha \mAND \textrm{I} ) ;
( \alpha \mAND (\aga \mra A)   )  ))\fCenter A $
\UI$  \aga \mBAND (
( \alpha \mAND \textrm{I} ) ;
( \alpha \mAND (\aga \mra A)   )  )
\fCenter (\aga \mband \alpha)\mRA A $
\UI$  \aga \mBAND (
( \alpha \mAND \textrm{I} ) ;
( \alpha \mAND (\aga \mra A)   )  )
\fCenter (\aga \mband \alpha)\mra A $
\UI$  
( \alpha \mAND \textrm{I} ) ;
( \alpha \mAND (\aga \mra A)   )  
\fCenter \aga \mRA ( (\aga \mband \alpha)\mra A )$
\UI$  
( \alpha \mAND \textrm{I} ) ;
( \alpha \mAND (\aga \mra A)   )  
\fCenter \aga \mra ( (\aga \mband \alpha)\mra A )$
\LeftLabel{$reduce'_L$}
\UI$  
 \alpha \mAND (\aga \mra A)    
\fCenter \aga \mra ( (\aga \mband \alpha)\mra A )$

\BI$( \alpha \mAND \textrm{I} ) ; ( \alpha \mAND (\aga \mra A)   )  
\fCenter ( \alpha \mand \top) \land (    \aga \mra ( (\aga \mband \alpha)\mra A ) ) $
\LeftLabel{$reduce'_L$}
\UI$\alpha \mAND (\aga \mra A) 
\fCenter ( \alpha \mand \top) \land (    \aga \mra ( (\aga \mband \alpha)\mra A ) ) $
\UI$\alpha \mand (\aga \mra A) 
\fCenter ( \alpha \mand \top) \land (    \aga \mra ( (\aga \mband \alpha)\mra A ) ) $
\DisplayProof
\end{tabular}
}
\end{center}


$\rule{126.2mm}{0.5pt}$
\begin{itemize}
\item $Pre(\alpha) \pand \bigwedge \{[\aga] \ls\beta\rs A\,|\, \alpha\aga\beta\} \vdash \lc\alpha\rc [\aga] A \rightsquigarrow (\alpha \mand \top) \pand  \aga \mra ((\aga \mband \alpha)\mra A) \fCenter \alpha\mand (\aga \mra A) $
\end{itemize}
\begin{center}
{\scriptsize
\AX$\alpha \fCenter \alpha$
\AX$\aga \fCenter \aga$
\AX$\aga \fCenter \aga$
\AX$\alpha \fCenter \alpha$
\BI$\aga \mBAND \alpha \fCenter \aga \mband \alpha$

\AX$A \fCenter A$
\BI$(\aga \mband \alpha) \mra A \fCenter (\aga \mBAND \alpha) \mRA A$
\BI$\aga \mra ((\alpha \mband \alpha) \mra A)\fCenter \aga \mRA((\aga \mBAND \alpha) \mRA A)$
\UI$\aga \mBAND(\aga \mra ((\alpha \mband \alpha) \mra A))\fCenter (\aga \mBAND \alpha) \mRA A$
\UI$(\aga \mBAND \alpha) \mBAND (\aga \mBAND(\aga \mra ((\alpha \mband \alpha) \mra A)))\fCenter  A$
\LeftLabel{\fns{swap-out$_L$}}
\UI$\aga \mBAND (\alpha \mBAND(\aga \mra ((\alpha \mband \alpha) \mra A)))\fCenter  A$
\UI$\alpha \mBAND(\aga \mra ((\alpha \mband \alpha) \mra A))\fCenter \aga \mRA  A$
\UI$\alpha \mBAND(\aga \mra ((\alpha \mband \alpha) \mra A))\fCenter \aga \mra  A$
\UI$\textrm{I} \fCenter  \alpha \mBAND(\aga \mra ((\alpha \mband \alpha) \mra A)) > \aga \mra  A $
\UI$\alpha \mBAND(\aga \mra ((\alpha \mband \alpha) \mra A))\, ; \textrm{I} \fCenter \aga \mra  A$
\BI$\alpha \mAND( \alpha \mBAND(\aga \mra ((\alpha \mband \alpha) \mra A))\, ; \textrm{I}) \fCenter \alpha \mand (\aga \mra  A)$
\LeftLabel{\fns{$conj_0 \! \mand$}}
\UI$ \aga \mra ((\alpha \mband \alpha) \mra A)\, ;  (\alpha \mAND \textrm{I}) \fCenter \alpha \mand (\aga \mra  A)$
\UI$ \alpha \mAND \textrm{I} \fCenter  \aga \mra ((\alpha \mband \alpha) \mra A)\, > \alpha \mand (\aga \mra  A)$
\UI$  \textrm{I} \fCenter  \alpha \mBRA (\aga \mra ((\alpha \mband \alpha) \mra A)\, > \alpha \mand (\aga \mra  A))$
\UI$  \top \fCenter  \alpha \mBRA (\aga \mra ((\alpha \mband \alpha) \mra A)\, > \alpha \mand (\aga \mra  A))$
\UI$ \alpha \mAND \top \fCenter  \aga \mra ((\alpha \mband \alpha) \mra A)\, > \alpha \mand (\aga \mra  A)$
\UI$ \alpha \mand \top \fCenter  \aga \mra ((\alpha \mband \alpha) \mra A)\, > \alpha \mand (\aga \mra  A)$
\UI$ \aga \mra ((\alpha \mband \alpha) \mra A)\, ; \alpha \mand \top \fCenter  \alpha \mand (\aga \mra  A)$

\UI$(\alpha \mand \top)\, ;  \aga \mra ((\aga \mband \alpha)\mra A) \fCenter \alpha\mand (\aga \mra A)$
\UI$(\alpha \mand \top) \pand  \aga \mra ((\aga \mband \alpha)\mra A) \fCenter \alpha\mand (\aga \mra A)$
\DisplayProof}
\end{center}
}
}

\commment{
\section{Appendix}

\begin{prop}
If $X\vdash Y$ is a D'.EAK-derivable sequent and $X'\vdash Y'$ is its translation in the language of dynamic calculus, as specified in section \ref{}, then $X'\vdash Y'$ is derivable in the dynamic calculus.
\end{prop}
\begin{proof}
Let $\pi$ be a D'.EAK prooftree with conclusion $X\vdash Y$. We proceed by induction on the height of $\pi$. If $\pi$ consists of only one node then $X\vdash Y$ is an axiom. If  $X =  Y = p\in \mathsf{AtProp}$, then a proof in the dynamic calculus is given by $p\vdash p$. If $X\vdash Y$ is an \textit{atom} axiom then it is of the form $\Gamma p\vdash \Delta p$, with $\Gamma$ and $\Delta$  respectively being finite  sequences  of the form $(\alpha_1),\ldots,(\alpha_n)$ and $(\beta_1),\ldots,(\beta_m)$ such that $(\alpha_j)\in \{ \{\alpha_j\}, \RESalphajProxy\}$ and $(\beta_k)\in \{ \{\beta_k\}, \RESbetakProxy\}$. Then the translation of $X\vdash Y$  is the  $\Gamma' p\vdash \Delta' p$ with $\Gamma'$ and $\Delta'$  being finite  sequences (of possibly different length) of the form $\alpha_1\circ,\ldots,\circ\alpha_n$ and $\beta_1{\rhd},\ldots,{\rhd}\beta_m$ such that $\circ\in \{\mAND_0, \mBAND_0\}$ and ${\rhd}\in \{\mRA_0, \mBRA_0\}$. The translation of $X\vdash Y$ is easily seen to be an \textit{atom} axiom of the dynamic calculus. Hence, the corresponding proof in the dynamic calculus consists of one node labelled with the translated sequent.

Assume that $X\vdash Y$ is the conclusion of an application of a rule $R$. 
\end{proof}

\commment{
\begin{definition}
property P: for any instance {\em inf} of a given rule $R$,
\AX$x_1 \fCenter y_1$
\AXC{$\cdots$}
\AX$x_n \fCenter y_n$
\TI$w \fCenter z$
\DisplayProof

such that the conclusion $w \fCenter z$ can be equivalently transformed via a sequence of applications of display postulates into a sequent $w' \fCenter z'$ which belongs to a restricted language $\mathcal{L}'$, the same transformation (that is, the same sequence of display postulates) can be applied to each premise
\end{definition}

\begin{prop}
If the calculus C has property P then it is a conservative extension of EAK
\end{prop}
}

\subsection{Translation}
\textbf{Type $\mathsf{FM}$}

\begin{tabular}{rcl}
$A \fCenter A$ 
& $\quad \rightsquigarrow \quad$ 
& $\sigma(A) \fCenter \sigma(A)$
\\
$\alpha_1 \circ \cdots \circ \alpha_n p
\fCenter \beta_1 {\rhd} \cdots {\rhd} \beta_m p $ & $\rightsquigarrow$ 
& $(\alpha_1) \cdots (\alpha_n) p \fCenter (\beta_1) \cdots (\beta_m) p $
\\
~\\
$  \textrm{I} \fCenter Y $ 
& $\rightsquigarrow$ 
& $\textrm{I} \fCenter \tau_2( Y )$
\\
$  X \fCenter \textrm{I} $ 
& $\rightsquigarrow$ 
& $\tau_1(X) \fCenter \textrm{I}$
\\
$X_1\ ; X_2 \fCenter Y$
& $\rightsquigarrow$ 
& $\tau_1(X_1)\ ; \tau_1(X_2) \fCenter \tau_2(Y)$
\\
$X \fCenter Y_1\ ; Y_2$
& $\rightsquigarrow$ 
& $ \tau_1(X) \fCenter   \tau_2(Y_1)\ ; \tau_2(Y_2)$
\\
$X_1 > X_2 \fCenter Y$
& $\rightsquigarrow$ 
& $\tau_1(X_1) > \tau_1(X_2) \fCenter \tau_2(Y)$
\\
$X \fCenter Y_1 > Y_2$
& $\rightsquigarrow$ 
& $ \tau_1(X) \fCenter   \tau_2(Y_1) > \tau_2(Y_2)$
\\
~\\
$\alpha \mAND_0 X \fCenter Y$ 
& $\rightsquigarrow$ 
& $\{ \alpha \} \tau_1(X) \fCenter \tau_2(Y)$
\\
$ X \fCenter \alpha \mRA_0 Y$ 
& $\rightsquigarrow$ 
& $\tau_1(X) \fCenter \{ \alpha \}  \tau_2(Y)$
\\
$ \aga \mAND_2 X \fCenter Y$ 
& $\rightsquigarrow$ 
& $\{ \aga \} \tau_1(X) \fCenter \tau_2(Y)$
\\
$ X \fCenter \aga \mRA_2 Y$ 
& $\rightsquigarrow$ 
& $\tau_1(X) \fCenter \{ \aga \}  \tau_2(Y)$
\\
$\alpha \mBAND_0 X \fCenter Y$ 
& $\rightsquigarrow$ 
& $ \RESalphaProxy \tau_1(X) \fCenter \tau_2(Y)$
\\
$ X \fCenter \alpha \mBRA_0 Y$ 
& $\rightsquigarrow$ 
& $\tau_1(X) \fCenter \RESalphaProxy  \tau_2(Y)$
\\
$ \aga \mBAND_2 X \fCenter Y$ 
& $\rightsquigarrow$ 
& $\RESagaProxy \tau_1(X) \fCenter \tau_2(Y)$
\\
$ X \fCenter \aga \mBRA_2 Y$ 
& $\rightsquigarrow$ 
& $\tau_1(X) \fCenter \RESagaProxy  \tau_2(Y)$
\\
~\\
$(\aga \mBAND_3 \alpha) \mAND_1 X \fCenter Y$ 
& $\rightsquigarrow$ 
& $(\;  \{ \beta \} \tau_1(X) \fCenter \tau_2(Y) \mid \alpha\aga\beta \; )$
\\
$ X \fCenter (\aga \mBAND_3 \alpha) \mRA_1 Y$ 
& $\rightsquigarrow$ 
& $(\; \tau_1(X) \fCenter \{ \beta  \}  \tau_2(Y) \mid \alpha\aga\beta \; )$
\\
$(\aga \mBAND_3 \alpha) \mBAND_1 X \fCenter Y$ 
& $\rightsquigarrow$ 
& $(\; \RESbetaProxy \tau_1(X) \fCenter \tau_2(Y)  \mid \alpha\aga\beta \; )$
\\
$ X \fCenter (\aga \mBAND_3 \alpha) \mBRA_1 Y$ 
& $\rightsquigarrow$ 
& $(\; \tau_1(X) \fCenter \RESbetaProxy  \tau_2(Y)  \mid \alpha\aga\beta \; )$
\\
~\\
$(\aga \mAND_3 \alpha) \mAND_1 X \fCenter Y$ 
& $\rightsquigarrow$ 
& $(\;  \{ \beta \} \tau_1(X) \fCenter \tau_2(Y) \mid \beta\aga\alpha \; )$
\\
$ X \fCenter (\aga \mAND_3 \alpha) \mRA_1 Y$ 
& $\rightsquigarrow$ 
& $(\; \tau_1(X) \fCenter \{ \beta  \}  \tau_2(Y) \mid \beta\aga\alpha \; )$
\\
$(\aga \mAND_3 \alpha) \mBAND_1 X \fCenter Y$ 
& $\rightsquigarrow$ 
& $(\; \RESbetaProxy \tau_1(X) \fCenter \tau_2(Y)  \mid \beta\aga\alpha \; )$
\\
$ X \fCenter (\aga \mAND_3 \alpha) \mBRA_1 Y$ 
& $\rightsquigarrow$ 
& $(\; \tau_1(X) \fCenter \RESbetaProxy  \tau_2(Y)  \mid \beta\aga\alpha \; )$
\\
\end{tabular}
~\\
\bfred{What about $X \mLA_1  X $ and $ X \mBLA_1 X$?}
\bfred{$X \mLA_1  X $ or $ X \mBLA_1 X$ is in a formula type sequent, then it is in precedent position and the sequent is severe.}

\begin{lemma}
If $X\vdash Y$ is a  sequent of type $\mathsf{FM}$ which is derivable in the dynamic calculus and moreover no application of weakening, balance, atom and necessitation introduce severe structures, then if  $X'\vdash Y'$ is its translation in the language of D'.EAK, as specified in section \ref{}, then $X'\vdash Y'$ is derivable in D'.EAK.
\end{lemma}
\begin{proof}
Let $\pi$ be a dynamic calculus prooftree with conclusion $X\vdash Y$. We proceed by induction on the height of $\pi$. If $\pi$ consists of only one node then $X\vdash Y$ is an axiom. If  $X =  Y = p\in \mathsf{AtProp}$, then a proof in  D'.EAK is given by $p\vdash p$. If $X\vdash Y$ is an \textit{atom} axiom then it is of the form $\Gamma p\vdash \Delta p$, with $\Gamma$ and $\Delta$  respectively being 
finite  sequences (of possibly different length) of the form $\alpha_1\circ,\ldots,\circ\alpha_n$ and $\beta_1{\rhd},\ldots,{\rhd}\beta_m$ such that $\circ\in \{\mAND_0, \mBAND_0\}$ and ${\rhd}\in \{\mRA_0, \mBRA_0\}$.
Then the translation of $X\vdash Y$  is the  $\Gamma' p\vdash \Delta' p$ with $\Gamma'$ and $\Delta'$  being
finite  sequences  of the form $(\alpha_1),\ldots,(\alpha_n)$ and $(\beta_1),\ldots,(\beta_m)$ such that $(\alpha_j)\in \{ \{\alpha_j\}, \RESalphajProxy\}$ and $(\beta_k)\in \{ \{\beta_k\}, \RESbetakProxy\}$. 
The translation of $X\vdash Y$ is easily seen to be an \textit{atom} axiom of  D'.EAK. Hence, the corresponding proof in  D'.EAK consists of one node labelled with the translated sequent.

Assume that $X\vdash Y$ is the conclusion of an application of a rule $R$. 
The proof proceeds by cases.
If 
\end{proof}

\begin{center}
\begin{tabular}{cc}

\AX$ (\agA \mBAND F) \mBAND (\agA \mBAND X) \fCenter Y$
\LeftLabel{\scriptsize{swap-out$_L$}}
\UI$ \agA \mBAND(F \mBAND X ) \fCenter Y$
\DisplayProof
&

\AX$X\fCenter (\agA \mBAND F) \mBRA (\agA \mBRA Y)$
\RightLabel{\scriptsize{swap-out$_R$}}
\UI$X \fCenter \agA \mBRA (F \mBRA Y)$
\DisplayProof

\\

\\
\AX$ \agA \mBAND(F \mBAND X ) \fCenter Y$
\LeftLabel{\scriptsize{swap-in$_L$}}
\UI$  (\agA \mBAND F) \mBAND (\agA \mBAND ((\alpha \mAND \textrm{I} ) ;   X)) \fCenter Y$

\DisplayProof

&

\AX$X \fCenter \agA \mBRA (F \mBRA Y)$
\RightLabel{\scriptsize{swap-in$_R$}}
\UI$X\fCenter (\agA \mBAND F) \mBRA (\agA \mBRA ((\alpha \mAND \textrm{I} ) > Y))$

\DisplayProof
\\

\end{tabular}
\end{center}

for $0\leq i \leq 2$, 

\begin{center}
\begin{tabular}{c c}

\AX$\textrm{I}\fCenter W$
\LeftLabel{\scriptsize{$({nec_i} \mand)$}}

\UI$ x \mAND_{\! i} \  \textrm{I} \fCenter W$
\DisplayProof &
\AX$W \fCenter \textrm{I} $
\RightLabel{\scriptsize{$({nec_i} \mra)$}}

\UI$ W \fCenter x \mRA_{\!\!i} \textrm{I}$
\DisplayProof \\

\\
\AX$\textrm{I}\fCenter W$
\LeftLabel{\scriptsize{$({nec_i} \mband)$}}

\UI$x \mBAND_{\! i} \  \textrm{I} \fCenter W$
\DisplayProof &
\AX$W \fCenter \textrm{I} $
\RightLabel{\scriptsize{$({nec_i} \mbra)$}}

\UI$W \fCenter x \mBRA_{\!\!i} \textrm{I}$
\DisplayProof \\

 \\

\AX$x \mAND_{\!\!i\,} ((x \mBAND_{\!\!i\,} Z) \,; Y)\fCenter W$
\LeftLabel{\scriptsize{$({conj_i} \mand)$}}
\UI$(x\mAND_{\!\!i\,} Y) \,; Z \fCenter W$
\DisplayProof &
\AX$x \mBAND_{\!\!i\,} ((x \mAND_{\!\!i\,} Z) \,; Y) \fCenter W$
\RightLabel{\scriptsize{$({conj_i} \mband)$}}
\UI$(x\mBAND_{\!\!i\,} Y) \,; Z \fCenter W$
\DisplayProof \\

\\

\AX$ ( x \mRA_{\!\! i} \  Y)> (x \mAND_{\! i} \  Z)  \fCenter W$
\LeftLabel{\scriptsize{$({FS_i}\mand)$}}
\UI$ x \mAND_{\! i} \  (Y > Z)  \fCenter W$
\DisplayProof
&

\AX$W \fCenter ( x \mAND_{\! i} \  Y)> (x \mRA_{\!\! i} \  Z)  $
\RightLabel{\scriptsize{$({FS_i}\mra)$}}

\UI$W \fCenter x \mRA_{\!\! i} \  (Y > Z)$
\DisplayProof
\\

\\
\AX$( x \mBRA_{\!\! i} \  Y)> (x \mBAND_{\! i} \  Z)  \fCenter W $
\LeftLabel{\scriptsize{$({FS_i}\mband)$}}

\UI$ x \mBAND_{\! i} \  (Y > Z)  \fCenter W$
\DisplayProof &
\AX$W\fCenter ( x \mBAND_{\! i} \  Y)> (x \mBRA_{\!\! i} \  Z)  $
\RightLabel{\scriptsize{$({FS_i}\mbra)$}}
\UI$W \fCenter x \mBRA_{\!\! i} \  (Y > Z)$
\DisplayProof
\\

\\
\AX$( x \mAND_{\! i} \  Y) ; (x \mAND_{\! i} \  Z)  \fCenter W $
\LeftLabel{\scriptsize{$({Dis_i}\mand)$}}

\UI$ x \mAND_{\! i} \  (Y ; Z)  \fCenter W$
\DisplayProof &
\AX$W\fCenter ( x \mRA_{\!\! i} \  Y) ; (x \mRA_{\!\! i} \  Z)  $
\RightLabel{\scriptsize{$({Dis_i}\mra)$}}
\UI$W \fCenter x \mRA_{\!\! i} \  (Y ; Z)$
\DisplayProof
\\

\\
\AX$ ( x \mBAND_{\! i} \  Y) ; (x \mBAND_{\! i} \  Z)  \fCenter W $
\LeftLabel{\scriptsize{$({Dis_i}\mband)$}}

\UI$ x \mBAND_{\! i} \  (Y ; Z)  \fCenter W$
\DisplayProof &
\AX$ W \fCenter   ( x \mBRA_{\!\! i} \  Y) ; (x \mBRA_{\!\! i} \  Z)  $
\RightLabel{\scriptsize{$({Dis_i}\mbra)$}}
\UI$W \fCenter  x \mBRA_{\!\! i} \  (Y ; Z)  $
\DisplayProof
\end{tabular}
\end{center}

Display, operational rules

\begin{lemma}
\red{EDIT}
\begin{enumerate}
\item If $( X \fCenter Y)[\Gamma \mAND_1 Z ]^{pre}$ is derivable with a sensible proof in the dynamic calculus in which no application of the rule swap-in occurs, then there exists some $\overline{X},\overline{Y}$ such that $( \overline{X} \fCenter \overline{Y})[\beta \mAND_0 Z]^{pre}$ is derivable with a proof with no greater height,
and $\overline{X}$ and $\overline{Y}$ have  the same generation trees as $X$ and $Y$ respectively and  differ from them possibly only on atomic constituents.
\item If $( X \fCenter Y)[\Gamma \mBAND_1 Z ]^{pre}$ is derivable with a sensible proof in the dynamic calculus in which no application of the rule swap-in occurs, then there exists some $\overline{X},\overline{Y}$ such that $( \overline{X} \fCenter \overline{Y})[\beta \mBAND_0 Z]^{pre}$ is derivable with a proof with no greater height,
and $\overline{X}$ and $\overline{Y}$ have  the same generation trees as $X$ and $Y$ respectively and  differ from them possibly only on atomic constituents.
\item If $( X \fCenter Y)[\Gamma \mRA_1 Z ]^{suc}$ is derivable with a sensible proof in the dynamic calculus in which no application of the rule swap-in occurs, then there exists some $\overline{X},\overline{Y}$ such that $( \overline{X} \fCenter \overline{Y})[\beta \mRA_0 Z]^{suc}$ is derivable with a proof with no greater height,
and $\overline{X}$ and $\overline{Y}$ have  the same generation trees as $X$ and $Y$ respectively and  differ from them possibly only on atomic constituents.
\item If $( X \fCenter Y)[\Gamma \mBRA_1 Z ]^{suc}$ is derivable with a sensible proof in the dynamic calculus in which no application of the rule swap-in occurs, then there exists some $\overline{X},\overline{Y}$ such that $( \overline{X} \fCenter \overline{Y})[\beta \mBRA_0 Z]^{suc}$ is derivable with a proof with no greater height,
and $\overline{X}$ and $\overline{Y}$ have  the same generation trees as $X$ and $Y$ respectively and  differ from them possibly only on atomic constituents.
\end{enumerate}

\end{lemma}
\begin{proof}
The structures $\Gamma \mAND_1 Z$, $\Gamma \mBAND_1 Z$, $\Gamma \mRA_1 Z$ and $\Gamma \mBRA_1 Z$ cannot be part of an axiom, therefore the proof must have at least height $2$ and $ X \fCenter Y$  has to be the consequence of some rule application $R$.

Notice preliminarily that
$\Gamma$ must be of the shape $\aga \mAND_3 \alpha$ or $\aga \mBAND_3 \alpha$ since it is in precedent position.

We prove items 1 to 4 by simultaneous induction on the height of the prooftree.

The connective $\mAND_1$ can only have been introduced by the rules weakening, necessitation$_1$ or the binary introduction rule for $\mAND_1$.

\textbf{Base cases:}
If the height of the proof is $2$, we have the following cases:
where $\Gamma$ is either of the form $\aga \mAND_3 \alpha$ or $\aga \mBAND_3 \alpha$
\begin{center}
\begin{tabular}{rcl}
\AX$\textrm{I}\fCenter \top$
\LeftLabel{\scriptsize{$({nec_1} \mand)$}}
\UI$ \Gamma \mAND_1 \textrm{I} \fCenter \top$
\DisplayProof 

&$\quad \rightsquigarrow \quad$
&

\AX$\textrm{I}\fCenter \top$
\LeftLabel{\scriptsize{$({nec_1} \mand)$}}
\UI$ \beta \mAND_0 \textrm{I} \fCenter \top$
\DisplayProof 

\\
~\\

\AX$\textrm{I}\fCenter \top$
\LeftLabel{\scriptsize{$IW_L$}}
\UI$\Gamma \mAND_1 X \fCenter \top $
\DisplayProof

&$\quad \rightsquigarrow \quad$
&
\AX$\textrm{I}\fCenter \top$
\LeftLabel{\scriptsize{$IW_L$}}
\UI$\beta \mAND_0 X \fCenter \top $
\DisplayProof

~\\
~\\

\AX$p\fCenter p$
\LeftLabel{\scriptsize{$W^1_L$}}
\UI$\Gamma \mAND_1 X \fCenter p< p $
\DisplayProof

&$\quad \rightsquigarrow \quad$
&

\AX$p\fCenter p$
\LeftLabel{\scriptsize{$W^1_L$}}
\UI$\beta \mAND_0 X \fCenter p< p $
\DisplayProof

~\\
~\\

\AX$p\fCenter p$
\LeftLabel{\scriptsize{$W^2_L$}}
\UI$\Gamma \mAND_1 X \fCenter p> p $
\DisplayProof

&$\quad \rightsquigarrow \quad$
&

\AX$p\fCenter p$
\LeftLabel{\scriptsize{$W^2_L$}}
\UI$\beta \mAND_0 X \fCenter p> p $
\DisplayProof
\\
\end{tabular}
\end{center}
The remaining base cases are treated similarly.

\bigskip

\textbf{Induction:}
Assume the height $h$ of the proof $\pi$  is greater than $2$. 
If $\Gamma \mAND_1 Z$ is parametric in the  application of the rule $R$ and occurs both in the premisses and in the conclusion,
then the statement immediately follows by induction hypothesis.

The remaining cases are that either 
$\Gamma \mAND_1 Z$ is not parametric in the  application of the rule $R$ or it has been introduced parametrically with the application of $R$.

Consider the case where $\Gamma \mAND_1 Z$ has been introduced parametrically with the application of $R$. Then the rule $R$ can either be a weakening or necessitation$_1$.
In the case of a weakening 
\begin{center}
\begin{tabular}{rcl}
\AX $ X\fCenter Y$
\LeftLabel{$R = W^2_L$}
\UI $ W[\Gamma \mAND_1 Z] \fCenter X > Y$
\DisplayProof

& $\quad \rightsquigarrow \quad$
& 

\AX $ X\fCenter Y$
\LeftLabel{$W^2_L$}
\UI $ W[\beta \mAND_0 Z ] \fCenter X > Y$
\DisplayProof
\end{tabular}
\end{center}
 where $ W[\beta \mAND_0 Z ] $ has been obtained by uniformly substituting $ \beta \mAND_0 Z  $ for $\Gamma \mAND_1 Z$ in $W$.
 The cases $R =W^1_L $ and $R = IW_L$ are similar.
In the case of necessitation$_1$, 
\begin{center}
\begin{tabular}{rcl}
\AX $ Z\fCenter Y$
\LeftLabel{$R = nec_1$}
\UI $ \Gamma \mAND_1 Z \fCenter  Y$
\DisplayProof

& $\quad \rightsquigarrow \quad$
& 

\AX $ Z\fCenter Y$
\LeftLabel{$nec_0$}
\UI $\beta \mAND_0 Z \fCenter  Y$
\DisplayProof
\end{tabular}
\end{center}

Consider the case where $\Gamma \mAND_1 Z$ is not parametric in the  application of the rule $R$.
If $R$ is the display rule
\begin{center}
\AX$\Gamma \fCenter Y \mBLA_1 Z$
\UI$\Gamma \mAND_1 Z \fCenter Y$
\DisplayProof
\end{center}
Recall that $\Gamma$ can either be of the form 
$\aga \mAND_3 \alpha$ or $\aga \mBAND_3 \alpha$.
Hence the only rule that the sequent $\Gamma \fCenter Y \mBLA_1 Z$ can be a consequence of is an application of the same display postulate in the converse direction.
Hence the statement follows by induction hypothesis applied on the shorter proof of $\Gamma \mAND_1 Z \fCenter Y$.

In the case $R$ is the display rule
\begin{center}
\begin{tabular}{rcl}
\AXC{\ \ $\vdots$ \raisebox{1mm}{$\pi$}}
\noLine
\UI$Z \fCenter \Gamma \mBRA_1 Y $
\UI$\Gamma \mAND_1 Z \fCenter Y$
\DisplayProof
& $\quad \rightsquigarrow \quad$
\AXC{\ \ $\vdots$ \raisebox{1mm}{$\pi'$}}
\noLine
\UI$Z \fCenter \beta \mBRA_0 Y $
\UI$\beta \mAND_0 Z \fCenter Y$
\DisplayProof
\end{tabular}
\end{center}
the transformation above holds by applying the induction hypothesis on 
$Z \fCenter \Gamma \mBRA_1 Y $.

Let $R$ be the rule $({FS_1}\mand)$. 
\begin{center}
\begin{tabular}{rcl}
\AXC{\ \ $\vdots$ \raisebox{1mm}{$\pi$}}
\noLine
\UI$ ( \Gamma \mRA_1 Y)> (\Gamma \mAND_1 Z)  \fCenter W$
\LeftLabel{\scriptsize{$({FS_1}\mand)$}}
\UI$ \Gamma \mAND_1 (Y > Z)  \fCenter W$
\DisplayProof
& $\quad \rightsquigarrow \quad$
&
\AXC{\ \ $\vdots$ \raisebox{1mm}{$\pi'$}}
\noLine
\UI$ ( \beta \mRA_0 Y)> (\beta \mAND_0 Z)  \fCenter W$
\LeftLabel{\scriptsize{$({FS_0}\mand)$}}
\UI$ \beta \mAND_0 (Y > Z)  \fCenter W$
\DisplayProof
\end{tabular}
\end{center}
where $\pi'$ is obtained by repeatedly applying 
the induction hypothesis on $ ( \Gamma \mRA_1 Y)> (\Gamma \mAND_1 Z)  \fCenter W$ and on $ ( \beta \mRA_0 Y)> (\Gamma \mAND_1 Z)  \fCenter W$.
The cases $R=conj_1$ and $R = Dis_1$ are similar.
\end{proof}
}

\newpage

\section{Cut Elimination for the Dynamic Calculus for EAK}
\label{Appendix : cut in MultiType DisplayCalculus for EAK}


Let us recall that C'$_8$ only concerns applications of the cut rules in which both occurrences of the given cut-term are {\em non parametric}.
Notice that non parametric occurrences of atomic terms of type $\mathsf{Fm}$ involve an axiom on at least  one premise, thus we are reduced to the following cases (the case of the constant $\bot$ is symmetric to the case of $\top$ and is omitted):
\begin{center}
\begin{tabular}{@{}lcrclcr@{}}
\bottomAlignProof
\AX$\Phi p \fCenter p$
\AX$p \fCenter \Psi p$
\BI$\Phi p \fCenter \Psi p$
\DisplayProof
 & $\rightsquigarrow$ &
\bottomAlignProof
\AX$\Phi p \fCenter \Psi p$
\DisplayProof
\ \ \ & \quad &\ \ \
\bottomAlignProof
\AX$\textrm{I} \fCenter \top$
\AXC{\ \ $\vdots$ \raisebox{1mm}{$\pi$}}
\noLine
\UI$\textrm{I} \fCenter X$
\UI$\top \fCenter X$
\BI$\textrm{I} \fCenter X$
\DisplayProof
 & $\rightsquigarrow$ &
\bottomAlignProof
\AXC{\ \ $\vdots$ \raisebox{1mm}{$\pi$}}
\noLine
\UI$\textrm{I} \fCenter X$
\DisplayProof
 \\
\end{tabular}
\end{center}

\noindent Notice that non parametric occurrences of any given (atomic) operational term $a$ of type $\mathsf{Fnc}$ or $\mathsf{Ag}$ are confined to axioms $a\vdash a$. Hence:
\begin{center}
\begin{tabular}{@{}lcr@{}}
\bottomAlignProof
\AX$a \fCenter a$
\AX$a \fCenter a$
\BI$a \fCenter a$
\DisplayProof
 & $\rightsquigarrow$ &
\bottomAlignProof
\AX$a \fCenter a$
\DisplayProof
 \\
\end{tabular}
\end{center}
\noindent In each case above, the cut in the original derivation is strongly uniform by assumption, and is eliminated by the transformation.
As to cuts on non atomic terms, let us restrict our attention to those cut-terms the main connective of which is $\mand_i, \mband_{\! i} \ , \mra_i, \mbra_i$  for $0\leq i \leq 3$ (the remaining operational connectives are straightforward and left to the reader). 

The following table disambiguates the various structural connectives occurring in the reduction steps represented below:

\renewcommand{\arraystretch}{1.3}
\begin{center}
\begin{tabular}{|p{1cm}<{\centering}|p{1cm}<{\centering}|p{1cm}<{\centering}|p{1cm}<{\centering}|p{1cm}<{\centering}|}
\cline{2-5}
\mc{1}{c|}{} & $i=0$ & $i=1$ & $i=2$ & $i=3$\\
\hline
$\mRA$ & $\mRA_0$ & $\mRA_1$ & $\mRA_2$ & $\mSRA_3$
\\
\hline
$\mBRA$ & $\mBRA_0$ & $\mBRA_1$ & $\mBRA_2$ & $\mSBRA_3$
\\ 
\hline
$\mLA$ & $\mSLA_0$ & $\mLA_1$ & $\mSLA_2$ & $\mSLA_3$
\\
\hline
$\mBLA$ & $\mSBLA_0$ & $\mBLA_1$ & $\mSBLA_2$ & $\mSBLA_3$
\\
\hline
\end{tabular}
\end{center}
\renewcommand{\arraystretch}{1}

\begin{center}
\begin{tabular}{@{}rcl@{}}
\bottomAlignProof
\AXC{\ \ \ $\vdots$ \raisebox{1mm}{$\pi_0$}}
\noLine
\UI$x \fCenter a$
\AXC{\ \ \ $\vdots$ \raisebox{1mm}{$\pi_1$}}
\noLine
\UI$y\ \fCenter\ b$
\BI$x \mAND_{\! i} \  y\ \fCenter\  a \mand_i b$
\AXC{\ \ \ $\vdots$ \raisebox{1mm}{$\pi_2$}}
\noLine
\UI$ a\mAND_{\! i} \  b\ \fCenter\ z$
\UI$ a \mand_i b\ \fCenter\ z$
\BI$x\mAND_{\! i} \  y\ \fCenter\ z$
\DisplayProof
 & $\rightsquigarrow$ &
\bottomAlignProof
\AXC{\ \ \ $\vdots$ \raisebox{1mm}{$\pi_1$}}
\noLine
\UI$y\ \fCenter\ b$
\AXC{\ \ \ $\vdots$ \raisebox{1mm}{$\pi_0$}}
\noLine
\UI$x \fCenter a$

\AXC{\ \ \,$\vdots$ \raisebox{1mm}{$\pi_2$}}
\noLine
\UI$ a \mAND_{\! i} \  b\ \fCenter\ z$
\UI$ a \ \fCenter\ z \mBLA_{\! i} \  b$
\BI$ x  \fCenter\ z \mBLA_{\! i} \  b$

\UI$ x \mAND_{\! i} \  b\ \fCenter\ z$
\UI$b\ \fCenter\ x\mBRA_{\!\! i} \  z$
\BI$y\ \fCenter\ x \mBRA_{\!\! i} \  z$
\UI$x \mAND_{\! i} \  y\ \fCenter\ z$
\DisplayProof
 \\
\end{tabular}
\end{center}

\begin{center}
%
\begin{tabular}{@{}rcl@{}}
\bottomAlignProof

\AXC{\ \ \ $\vdots$ \raisebox{1mm}{$\pi_1$}}
\noLine
\UI$y\ \fCenter\ a \mRA_{\!\!i} \  b$
\UI$y\ \fCenter\ a \mra_{\!\!i} \ b$

\AXC{\ \ \ $\vdots$ \raisebox{1mm}{$\pi_0$}}
\noLine
\UI$x \fCenter a$
\AXC{\ \ \ $\vdots$ \raisebox{1mm}{$\pi_2$}}
\noLine
\UI$b\ \fCenter\ z$
\BI$a \mra_{\!\!i} \   b\ \fCenter\ x \mRA_{\!\!i} \  z$
\BI$y \fCenter x \mRA_{\!\!i} \   z$
\DisplayProof
 & $\rightsquigarrow$ &
\bottomAlignProof
\AXC{\ \ \ $\vdots$ \raisebox{1mm}{$\pi_0$}}
\noLine
\UI$x \fCenter a$
\AXC{\ \ \ $\vdots$ \raisebox{1mm}{$\pi_1$}}
\noLine
\UI$y\ \fCenter\ a \mRA_{\!\!i} \   b$
\UI$a \mBAND_{\! i} \  y \fCenter b$
\UI$a  \fCenter b \mLA_{\! i} \  y$
\BI$x  \fCenter b \mLA_{\! i} \  y$
\UI$x  \mBAND_{\! i} \  y \fCenter b$

\AXC{\ \ \ $\vdots$ \raisebox{1mm}{$\pi_2$}}
\noLine
\UI$b\ \fCenter\ z$
\BI$x \mBAND_{\! i} \  y\ \fCenter\ z$
\UI$y\ \fCenter\ x \mRA_{\!\!i} \ z$
\DisplayProof
 \\
\end{tabular}
\end{center}

\begin{center}
%
\begin{tabular}{@{}rcl@{}}
\bottomAlignProof
\AXC{\ \ \ $\vdots$ \raisebox{1mm}{$\pi_0$}}
\noLine
\UI$x \fCenter a$
\AXC{\ \ \ $\vdots$ \raisebox{1mm}{$\pi_1$}}
\noLine
\UI$y\ \fCenter\ b$
\BI$x \mBAND_{\! i} \  y\ \fCenter\  a \mband_{\! i} \  b$
\AXC{\ \ \ $\vdots$ \raisebox{1mm}{$\pi_2$}}
\noLine
\UI$ a\mBAND_{\! i} \  b\ \fCenter\ z$
\UI$ a \mband_{\! i} \  b\ \fCenter\ z$
\BI$x\mBAND_{\! i} \  y\ \fCenter\ z$
\DisplayProof
 & $\rightsquigarrow$ &
\bottomAlignProof
\AXC{\ \ \ $\vdots$ \raisebox{1mm}{$\pi_1$}}
\noLine
\UI$y\ \fCenter\ b$
\AXC{\ \ \ $\vdots$ \raisebox{1mm}{$\pi_0$}}
\noLine
\UI$x \fCenter a$

\AXC{\ \ \,$\vdots$ \raisebox{1mm}{$\pi_2$}}
\noLine
\UI$ a \mBAND_{\! i} \  b\ \fCenter\ z$
\UI$ a \ \fCenter\ z \mLA_{\! i} \  b$
\BI$ x  \fCenter\ z \mLA_{\! i} \  b$
\UI$ x \mBAND_{\! i} \  b\ \fCenter\ z$
\UI$b\ \fCenter\ x\mRA_{\!\! i} \  z$
\BI$y\ \fCenter\ x \mRA_{\!\! i} \  z$
\UI$x \mBAND_{\! i} \  y\ \fCenter\ z$
\DisplayProof
 \\
\end{tabular}
\end{center}

\begin{center}
%
\begin{tabular}{@{}rcl@{}}
\bottomAlignProof

\AXC{\ \ \ $\vdots$ \raisebox{1mm}{$\pi_1$}}
\noLine
\UI$y\ \fCenter\ a \mBRA_{\!\!i} \ b$
\UI$y\ \fCenter\ a \mbra_{\!\!i}  \  b$

\AXC{\ \ \ $\vdots$ \raisebox{1mm}{$\pi_0$}}
\noLine
\UI$x \fCenter a$
\AXC{\ \ \ $\vdots$ \raisebox{1mm}{$\pi_2$}}
\noLine
\UI$b\ \fCenter\ z$
\BI$a \mbra_{\!\!i}  \  b\ \fCenter\ x \mBRA_{\!\!i} \  z$
\BI$y \fCenter x \mBRA_{\!\!i} \   z$
\DisplayProof
 & $\rightsquigarrow$ &
\bottomAlignProof
\AXC{\ \ \ $\vdots$ \raisebox{1mm}{$\pi_0$}}
\noLine
\UI$x \fCenter a$
\AXC{\ \ \ $\vdots$ \raisebox{1mm}{$\pi_1$}}
\noLine
\UI$y\ \fCenter\ a \mBRA_{\!\!i}  \  b$
\UI$a \mAND_{\! i} \  y \fCenter b$
\UI$a  \fCenter b \mBLA_{\! i} \  y$
\BI$x  \fCenter b \mBLA_{\! i} \  y$
\UI$x  \mAND_{\! i} \  y \fCenter b$

\AXC{\ \ \ $\vdots$ \raisebox{1mm}{$\pi_2$}}
\noLine
\UI$b\ \fCenter\ z$
\BI$x \mAND_{\! i} \  y\ \fCenter\ z$
\UI$y\ \fCenter\ x \mBRA_{\!\!i} \ z$
\DisplayProof
 \\
\end{tabular}
\end{center}

\noindent In each  case above, the cut in the original derivation is strongly uniform by assumption, and after the transformation, cuts of lower complexity are introduced which can be easily verified to be strongly uniform for each $0\leq i\leq 3$.

\bibliography{BIB}
\bibliographystyle{plain}

\end{document}